\documentclass[12pt]{article}
\usepackage{amsmath,amssymb,amsthm,bbm,srcltx,graphicx,color,xargs,natbib}
\usepackage[dvipsnames]{xcolor}
\usepackage[shortlabels]{enumitem}

\usepackage[a4paper]{geometry}

\usepackage[colorlinks]{hyperref}

\numberwithin{equation}{section}

\newcommand{\dto}{\stackrel{d}{\longrightarrow}}
\newcommand{\Pto}{\stackrel{P}{\longrightarrow}}
\newcommand{\fidi}{\stackrel{\mathrm{fi.di.}}{\longrightarrow}}

\newcommand\1[1]{\mathbbm{1}_{#1}}
\newcommand\ind[1]{\mathbbm{1}{\left\{#1\right\}}}

\newcommand{\EE}{\mathbb{E}}
\newcommand{\NN}{\mathbb{N}}
\newcommand{\PP}{\mathbb{P}} 
\newcommand{\RR}{\mathbb{R}} 
\newcommand{\Rset}{\mathbb{R}} 
\newcommand{\ZZ}{\mathbb{Z}}

\newcommand{\law}{\mathcal{L}}

\newcommand{\bsQ}{\boldsymbol{Q}}
\newcommand{\bsX}{\boldsymbol{X}}
\newcommand{\bsY}{\boldsymbol{Y}}
\newcommand{\bsZ}{\boldsymbol{Z}}
\newcommand{\bTheta}{\boldsymbol{\Theta}}

\newcommand{\by}{\boldsymbol{y}}
\newcommand{\bx}{\boldsymbol{x}}
\newcommand{\tbx}{\tilde{\boldsymbol{x}}}
\newcommand{\tby}{\tilde{\boldsymbol{y}}}

\newcommand\dtilde{\tilde{d}}

\newcommand\mca{\mathcal{A}}
\newcommand\mcb{\mathcal{B}}
\newcommand\mcm{\mathcal{M}}

\newcommand\rmd{\mathrm{d}}
\newcommand\rme{\mathrm{e}}
\newcommand\rmi{\mathrm{i}}

\newcommand\location{a}
\newcommand\skewness{\phi}
\newcommand\intrv{K}
\newcommand\pointmeas{\gamma}
\newcommand\paretorec{\zeta}
\newcommand\hausdorff{m}
\newcommand\metricE{m_E}
\newcommand\unifE{m^*}

\newcommand\shift{B}

\newcommand{\lo}{\tilde{l}_0}

\newcommand\PPC{N''}

\newcommand\setJ{J} 

\newcommandx\sequence[3][2=\ZZ,3=t]{\{#1_#3,#3\in#2\}}
\newcommandx\sequenceshort[2][2=t]{\{#1_#2\}}

\newcommand{\var}{\operatorname{var}}

\newcommand\cadlag{c\`adl\`ag}
\newcommand\eg{e.g.}
\newcommand\ie{i.e.}

\newcommand\iid{i.i.d.}

\usepackage{cleveref}

\newtheorem{theorem}{Theorem}[section]
\newtheorem{lemma}[theorem]{Lemma}

\newtheorem{proposition}[theorem]{Proposition}
\newtheorem{hypothesis}[theorem]{Assumption}
\theoremstyle{remark}
\newtheorem{remark}[theorem]{Remark}
\newtheorem{example}[theorem]{Example}

\crefname{hypothesis}{Assumption}{Assumptions}

\begin{document}

\title{An invariance principle for sums and record times of regularly varying stationary sequences}

\author{Bojan Basrak\thanks{Department of Mathematics, University of Zagreb, Bijeni\v cka 30,
    Zagreb, Croatia} \and Hrvoje Planini\'c\thanks{Department of Mathematics, University of Zagreb,
    Bijeni\v cka 30, Zagreb, Croatia} \and Philippe Soulier\thanks{Laboratoire MODAL'X, 'epartement de Math\'ematique
    et Informatique, Universit\'e Paris Nanterre, 92000 Nanterre, France}}

\maketitle

\begin{abstract}
  We prove a sequence of limiting results about weakly dependent stationary and regularly varying
  stochastic processes in discrete time. After deducing the limiting distribution for individual
  clusters of extremes, we present a new type of point process convergence theorem.  It is designed
  to preserve the entire information about the temporal ordering of observations which is typically
  lost in the limit after time scaling.  By going beyond the existing asymptotic theory, we are able
  to prove a new functional limit theorem. Its assumptions are satisfied by a wide class of applied
  time series models, for which standard limiting theory in the space $D$ of \cadlag\ functions does
  not apply.
  
  To describe the limit of partial sums in this more general setting, we use the space~$E$ of
  so--called decorated \cadlag\ functions. We also study the running maximum of partial sums for
  which a corresponding functional theorem can be still expressed in the familiar setting of space
  $D$.
  
  We further apply our method to analyze record times in a sequence of dependent stationary
  observations, even when their marginal distribution is not necessarily regularly varying.  Under
  certain restrictions on dependence among the observations, we show that the record times after
  scaling converge to a relatively simple compound scale invariant Poisson process.

 {Keywords: point process  \and regular variation \and
  	invariance principle \and functional limit theorem \and record times}
\end{abstract}

\section{Introduction}

Donsker--type functional limit theorems represent one of the key developments in probability theory.
They express invariance principles for rescaled random walks of the form
\begin{equation} 
  \label{e:RW}
  S_{\lfloor nt  \rfloor} = X_1 + \cdots + X_{\lfloor nt  \rfloor}\,,\quad t\in [0,1]\,.
\end{equation} 
Many extension of the original invariance principle exist, most notably allowing dependence between
the steps $X_i$, or showing, like Skorohod did, that non--Gaussian limits are possible if the steps
$X_i$ have infinite variance. For a survey of invariance principles in the case of dependent
variables in the domain of attraction of the Gaussian law, we refer to
\cite{merlevede:peligrad:utev:2006}, see also \cite{bradley:book} for a thorough survey of mixing
conditions.  In the case of a non--Gaussian limit, the limit of the processes $(S_{\lfloor nt
  \rfloor})_{t\in[0,1]}$ is not a continuous process in general.  Hence, the limiting theorems of
this type are placed in the space of \cadlag\ functions denoted by $D([0,1])$ under one of the
Skorohod topologies. The topology denoted by $J_1$ is the most widely used (often implicitely) and
suitable for \iid\ steps, but over the years many theorems involving dependent steps have been shown
using other Skorohod topologies. Even in the case of a simple $m$--dependent linear process from a
regularly varying distribution, it is known that the limiting theorem cannot be shown in the
standard $J_1$ topology, see Avram and Taqqu \cite{avram:taqqu:1992}.  Moreover, there are examples
of such processes for which none of the Skorohod topologies work, see \Cref{sec:consum}.

However, as we found out, for all those processes and many other stochastic models relevant in
applications, random walks do converge, but their limit exists in an entirely different space. To
describe the elements of such a space we use the concept of decorated \cadlag\ functions and denote
 the corresponding space by $E([0,1])$, following Whitt \cite{whitt:2002}. See \Cref{sec:consum}. Presentation of this new
type of limit theorem is the main goal of our article. For the statement of our main result see
\Cref{thm:PartialSumConvinE} in \Cref{sec:consum}.  As a related goal we also study the running
maximum of the random walk~$S_{\lfloor nt \rfloor}$ for which, due to monotonicity, the limiting
theorem can still be expressed in the familiar space $D([0,1])$.

Our main analytical tool is the limit theory for point processes in a certain nonlocally compact
space which is designed to preserve the order of the observations as we rescale time to interval
$[0,1]$ as in \eqref{e:RW}.  Observe that due to this scaling, successive observations collapse in
the limit to the same time instance. As the first result in this context, we prove in
Section~\ref{sec:clusters} a limit theorem related to large deviations results of the type shown
recently by \cite{mikosch:wintenberger:2016} (cf. also \cite{hult:samorodnitsky:2010}) and offer an
alternative probabilistic interpretation of these results.  Using our setup, we can group successive
observations in the sequence $\{X_i,\, i =1,\ldots, n\}$, in nonoverlapping clusters of increasing
size to define a point process which completely preserves the information about the order among the
observations. This allows us to show in a rather straightforward manner that so constructed
empirical point processes converge in distribution towards a Poisson point process on an appropriate
state space.  The corresponding theorem could be arguably considered as the key result of the
paper. It motivates all the theorems in the later sections and extends point process limit theorems
in \cite{davis:hsing:1995} and \cite{basrak:krizmanic:segers:2012}, see Section \ref{sec:poipro}.
 
Additionally, our method allows for the analysis of records and record times in the sequence of dependent
stationary observations $X_i$. By a classical result of R\'enyi, the number of records among first
$n$ iid observations from a continuous distribution grows logarithmically with $n$. Moreover it is
known (see \eg\ \cite{resnick:1987}) that record times rescaled by $n$ tend to the so--called scale
invariant Poisson process, which plays a fundamental role in several areas of probability, see
\cite{arratia:1998}.  For a stationary sequence with an arbitrary continuous
marginal distribution, we show that the record times converge to a relatively simple compound
Poisson process under certain restrictions on dependence. This form of the limit reflects the fact
that for dependent sequences records tend to come in clusters, as one typically observes in many
natural situations.  This is the content of Section \ref{sec:records}. Finally, proofs of certain
technical auxiliary results are postponed to \Cref{sec:proof-trivia}.  In the rest of the
introduction, we formally present the main ingredients of our model.

We now introduce our main assumptions and notation. Let $\|\cdot\|$ denote an arbitrary norm on
$\RR^d$ and let $\mathbb{S}^{d-1}$ be the corresponding unit sphere.  Recall that a $d$-dimensional
random vector $\bsX$ is regularly varying with index $\alpha > 0$ if there exists a random vector
$\boldsymbol{\Theta} \in \mathbb{S}^{d-1}$ such that
\begin{equation}
  \label{def:RV_d}
  \frac{1}{\PP(\|\bsX\| > x)} \PP(\|\bsX\| > ux, \bsX / \|\bsX\| \in \cdot )\Rightarrow u^{-\alpha} \PP(\bTheta \in \cdot),
\end{equation}
for every $u>0$ as $x \to\infty$, where $\Rightarrow$ denotes the weak convergence of measures, here on
$\mathbb{S}^{d-1}$.  An $\RR^d$-valued time series $\sequence{X}$ is regularly varying if
all the finite-dimensional vectors $(X_k,\dots,X_l)$, $k\leq l \in \ZZ$ are regularly varying, see
\cite{davis:hsing:1995} for instance. We will consider a stationary regularly varying process
$\sequence{X}$. The regular variation of the marginal distribution implies that
there exists a sequence $\{a_n\}$ which for all $x>0$ satisfies
\begin{align}
  \label{eq:a}
  n\PP(\|X_0\| > a_n x)\to x^{-\alpha} \; .
\end{align} 
  If $d=1$, it is known that 
  \begin{equation}
  \label{eq:conv-mu}
  n\PP(X_0 /a_n \in \cdot) \stackrel{v}{\longrightarrow} \mu  \; , 
\end{equation}
where $\stackrel{v}{\longrightarrow}$ denotes vague convergence on $\RR\setminus \{0\}$ with the measure $\mu$ on $\RR \setminus\{0\}$
given by
\begin{equation}
  \label{eq:def-mu}
  \mu(dy)=p\alpha y^{-\alpha-1} \1{(0,\infty)}(y) \rmd y + (1-p)\alpha (-y)^{-\alpha-1} \1{(-\infty,0)}(y) \rmd y
\end{equation}
for some $p\in[0,1]$. 

According to \cite{basrak:segers:2009}, the regular variation of the stationary sequence
$\sequence{X}$ is equivalent to the existence of an $\RR^d$ valued time series $\sequence{Y}$
called the tail process which satisfies $\PP(\|Y_0\| > y)=y^{-\alpha}$ for $y \geq 1$ and, as
$x\to \infty$,
\begin{align}
  \label{eq:tailprocess}
  \left ( \sequence{x^{-1}X} \, \big| \, \|X_0\| > x \right)  \fidi \sequence{Y} \; ,
\end{align}
where $\fidi$ denotes convergence of finite-dimensional distributions. Moreover, the so-called
spectral tail process $\sequence{\Theta}$, defined by $\Theta_t = Y_t/\|Y_0\|$, $t \in \ZZ$, 
turns out to be independent of $\|Y_0\|$ and satisfies
\begin{equation}
  \label{eq:theta}
  \left( \sequence{|X_0|^{-1} X} \, \big| \, \|X_0\| > x \right) \fidi \sequence{\Theta} \; , 
\end{equation}
as $x\to\infty$. If $d=1$, it follows that $p$ from (\ref{eq:def-mu}) satisfies $p=\PP(\Theta_0=1)=1-\PP(\Theta_0=-1)$.

We will often assume in addition that the following condition, referred to as the
anticlustering or finite mean cluster length condition, holds.
\begin{hypothesis}
\label{hypo:AC}
There exists a sequence of integers $(r_n)_{n\in\NN}$ such that $\lim_{n\to\infty} r_n
=\lim_{n\to\infty} n/r_n =\infty$ and  for every $u > 0$,
\begin{align}
  \label{eq:AC}
  \lim_{m \to \infty} \limsup_{n \to \infty} \PP \biggl( \max_{m \le |i| \le r_{n}} \|X_{i}\| > a_n  u\,\bigg|\,\|X_{0}\|>a_n u \biggr) = 0 \; .
\end{align}
\end{hypothesis}

There are many time series satisfying the conditions above including several nonlinear models like
stochastic volatility or GARCH (see \cite[Section~4.4]{mikosch:wintenberger:2013}). 

In the sequel, an important role will be played by the quantity $\theta$ defined by
\begin{align}
    \label{eq:def-theta}
    \theta = \PP\left(\sup_{t\geq 1} \|Y_t\| \leq 1\right) \; .
  \end{align}
It was shown in \cite[Proposition 4.2]{basrak:segers:2009} that \Cref{hypo:AC} implies that $\theta>0$.

\section{Asymptotics of clusters}
\label{sec:clusters} 
Let $l_0$ be the space of double-sided $\RR^d$-valued sequences converging to zero at both
ends, \ie~$l_0 = \{\bx= \sequence{x}[\ZZ][i] : \lim_{|i|\to\infty}\|x_i\|= 0\}$.  On $l_0$ consider the
uniform norm
\begin{align*}
  \|\bx\|_\infty = \sup_{i\in\ZZ} \|x_i\| \; ,
\end{align*}
which makes $l_0$ into a separable Banach space. Indeed, $l_0$ is the closure of all double-sided
rational sequences with finitely many non zero terms in the Banach space of all bounded double-sided
real sequences.  Define the shift operator $\shift$ on $l_0$ by $(\shift\bx)_i = x_{i+1}$ and
introduce an equivalence relation $\sim$ on $l_0$ by letting $\bx \sim \by $ if $\by=\shift^k\bx$
for some $k\in\ZZ$.  In the sequel, we consider the quotient space
$$
\lo = l_0/\sim \; ,
$$
and define a function $\tilde{d}:\lo\times\lo\longrightarrow [0,\infty)$ by
\begin{align*}
  \tilde{d}(\tilde{\bx},\tilde{\by}) =
  \inf\{\|\bx'-\by'\|_\infty:\bx'\in\tilde{\bx},\by'\in\tilde{\by}\} =
  \inf\{\|\shift^k\bx-\shift^l\by\|_\infty:k,l\in\ZZ\} \; .
\end{align*}
for all $\tilde{\bx},\tilde{\by}\in \lo$, and all $\bx\in\tilde{\bx},\by\in\tilde{\by}$.  The proof
of the following result can be found in \Cref{sec:proof-trivia}.
\begin{lemma}
  \label{lem:lo-complete}
  The function $\tilde{d}$ is a metric which makes $\lo$ a separable and complete
  metric space.
\end{lemma}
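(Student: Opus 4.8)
The plan is to verify the three properties separately: that $\dtilde$ is a metric, that $(\lo,\dtilde)$ is separable, and that it is complete. Throughout, I would work with representatives in $l_0$ and use the fact that for $\bx\in l_0$ each shifted sequence $\shift^k\bx$ again lies in $l_0$ and has the same supremum norm, so $\|\shift^k\bx\|_\infty=\|\bx\|_\infty$; moreover, since $\|x_i\|\to0$ as $|i|\to\infty$, the supremum defining $\|\bx\|_\infty$ is actually attained.

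First I would check that $\dtilde$ is well defined on the quotient (it clearly does not depend on the choice of representatives, by the very form of the second infimum) and that it is a pseudometric: symmetry is obvious, and the triangle inequality follows from the triangle inequality for $\|\cdot\|_\infty$ together with the observation that $\|\shift^k\bx-\shift^l\by\|_\infty=\|\bx-\shift^{l-k}\by\|_\infty$, so one can reduce the three infima over $\ZZ^3$ to a ``telescoped'' pair and estimate $\dtilde(\tbx,\tbz)\le\|\bx-\shift^{k}\by\|_\infty+\|\shift^k\by-\shift^{k+l}\bz\|_\infty$ for arbitrary $k,l$ and take infima. The crucial point — and the main obstacle — is positivity: I must show that $\dtilde(\tbx,\tby)=0$ forces $\tbx=\tby$, \ie\ the infimum over shifts is attained (or at least that a minimizing shift cannot escape to infinity). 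Here is where $l_0$ (rather than, say, $\ell^\infty$) is essential: fix representatives $\bx,\by$ with $\bx\neq 0$ (the case $\bx=0$ being trivial since then $\dtilde=\|\by\|_\infty=0$ forces $\by=0$), pick an index $i_0$ with $\|x_{i_0}\|=\|\bx\|_\infty>0$, and note that if $\|\bx-\shift^k\by\|_\infty$ is small then $\|y_{i_0+k}\|\ge \|\bx\|_\infty/2$; since $\|y_j\|\to0$, the shift $k$ is forced to lie in a finite set. Hence a sequence of shifts $k_n$ with $\|\bx-\shift^{k_n}\by\|_\infty\to0$ has a constant subsequence $k_n\equiv k$, giving $\bx=\shift^k\by$, \ie\ $\tbx=\tby$.

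Next, for separability I would exhibit a countable dense set: the images in $\lo$ of sequences in $l_0$ with finitely many nonzero entries, all of which lie in $\bbQ^d$. Given $\bx\in l_0$ and $\varepsilon>0$, truncate $\bx$ outside a large window where $\|x_i\|<\varepsilon$ and approximate the finitely many remaining coordinates by rationals; this produces an element of the countable set within $\dtilde$-distance $\varepsilon$ of $\tbx$, since $\dtilde$ is dominated by the $\|\cdot\|_\infty$-distance of any chosen representatives. (This also re-uses the fact, noted in the excerpt, that $l_0$ is separable.)

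Finally, for completeness, let $(\tbx^{(n)})_n$ be $\dtilde$-Cauchy in $\lo$. Passing to a subsequence I may assume $\dtilde(\tbx^{(n)},\tbx^{(n+1)})<2^{-n}$, and then choose representatives inductively: fix any $\bx^{(1)}\in\tbx^{(1)}$, and having chosen $\bx^{(n)}$ pick $\bx^{(n+1)}\in\tbx^{(n+1)}$ with $\|\bx^{(n)}-\bx^{(n+1)}\|_\infty<2^{-n}$, which is possible by the definition of $\dtilde$ as an infimum over shifts. Then $(\bx^{(n)})_n$ is Cauchy in the Banach space $l_0$ (with the uniform norm), hence converges to some $\bx\in l_0$; passing to the quotient, $\dtilde(\tbx^{(n)},\tbx)\le\|\bx^{(n)}-\bx\|_\infty\to0$, and a Cauchy sequence with a convergent subsequence converges. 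This completes the proof. The only genuinely delicate step is the attainment/non-escape argument for positivity of $\dtilde$; the rest is a routine transfer of the Banach-space structure of $l_0$ through the quotient map, which is $1$-Lipschitz and therefore interacts well with both density and Cauchy arguments.
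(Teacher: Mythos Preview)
Your proof is correct and follows essentially the same route as the paper's. The paper factors the separability and completeness arguments through a general lemma about quotient pseudometric spaces (their Lemma~\ref{lem:quotient-complet}), but the actual content---the inductive choice of representatives along a fast Cauchy subsequence, and the use of completeness of $l_0$---is identical to yours; likewise, your positivity argument (shifts cannot escape to infinity because $\|y_j\|\to0$) matches theirs almost verbatim.
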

One can naturally embed the set $\cup_{d\geq1} (\RR^d)^n \cup l_0$ into $\lo$ by mapping $\bx \in l_0$
to its equivalence class and an arbitrary finite sequence $\bx=(x_1,\ldots, x_n) \in (\RR^d)^n$ to the
equivalence class of the sequence
\[
(\ldots,0,0,\bx,0,0,\ldots) \; ,
\]
which adds zeros in front and after it.

Let $\sequence{Z}$ be a sequence distributed as the tail process $\sequence{Y}$ conditionally on the
event $\{\sup_{i\leq -1} \|Y_i\| \leq 1\}$ which, under \Cref{hypo:AC}, has a strictly positive
probability (cf. \cite[Proposition~4.2]{basrak:segers:2009}). More precisely,
\begin{equation}
  \label{eq:z}
  \mathcal{L}\left( \sequence{Z}\right) = \mathcal{L}\left( \sequence{Y} \, \Big|\, \sup_{i \leq -1} \|Y_i \| \leq 1 \right)\,.
\end{equation}
Since $(\ref{eq:AC})$ implies that  $\PP(\lim_{|t|\to\infty}\|Y_t\|= 0)=1$, see \cite[Proposition~4.2]{basrak:segers:2009}), the sequence $\{Z_t\}$ in \eqref{eq:z} can be viewed
as a random element in $l_0$ and $\lo$ in a natural way.  In particular, the random variable
\begin{equation*}
  L_Z=\sup_{j\in \ZZ} \|Z_j\| \; , 
\end{equation*} 
is a.s. finite and not smaller than 1 since $\PP(\|Y_0\| > 1)=1$.  Due to regular variation and (\ref{eq:AC}) one can show (see
\cite{basrak:tafro:2015}) that for $v\geq 1$
\begin{align*}
  \PP(L_Z>v)= v^{-\alpha} \; .
\end{align*}
One can also define a new sequence $\sequence{Q}$ in $\lo$ as the equivalence class of
\begin{equation} 
  \label{eq:q}
  Q_t = Z_t / L_Z \; , \ t\in \ZZ \; .
\end{equation}

Consider now a block of observations $( X_1,\ldots, X_{r_n})$ and define
$M_{r_n} = \max_{1\leq i \leq r_n} \|X_i\|$. It turns out that conditionally on the event that
$M_{r_n} > a_n u$, the law of such a block has a limiting distribution and that $L_Z$ and $\{Q_t\}$
are independent.
\begin{theorem}
  \label{lem:conv-cluster}
  Under \Cref{hypo:AC}, for every $u >0$,
 \begin{align*}
   \law \left( \frac{X_1,\ldots, X_{r_n}}{a_nu} \, \Big|\, M_{r_n} > a_nu\right) \dto \law
   \left(\sequence{Z} \right) \; , 
 \end{align*}
 as $n \to\infty$ in $\lo$. Moreover, $\sequence{Q}$ and $L_Z$ in \eqref{eq:q} are independent
 random elements with values in $\lo$ and $[0,\infty)$ respectively.
\end{theorem}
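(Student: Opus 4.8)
The plan is to prove the distributional convergence by unwinding the definition of the metric space $\lo$ and reducing the statement to finite-dimensional convergence plus a tightness/truncation argument that controls the tails of the block away from the location of its maximum. Concretely, for a fixed truncation level $m$, consider the ``windowed'' version of the block: relocate indices so that the index of the maximum of $(X_1,\ldots,X_{r_n})$ sits at the origin, keep only the coordinates at distance at most $m$ from it, and pad with zeros. Using stationarity and the standard large-deviation/Breiman-type heuristic behind the tail process, one shows that conditionally on $M_{r_n}>a_n u$, this windowed and recentered block converges in distribution (as a random element of $(\RR^d)^{2m+1}$, hence in $\lo$) to the correspondingly windowed version of $\sequence{Z}$. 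This is where the characterization of the tail process from \eqref{eq:tailprocess} and the conditioning in \eqref{eq:z} on $\{\sup_{i\le -1}\|Y_i\|\le 1\}$ enter: conditioning on the maximum of the block being large and being attained at a given position is, in the limit, exactly the conditioning that produces $\sequence{Z}$, because $\sup_{i\le -1}\|Y_i\|\le 1$ says precisely that the maximum occurs at (or after) the origin.

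Next I would promote this finite-window convergence to convergence in $\lo$ itself. The key point is that in the quotient space the shift is factored out, so the arbitrariness of where the maximum sits in $(1,\ldots,r_n)$ is irrelevant; what must be controlled is the contribution of coordinates far (distance $>m$) from the maximum. Here \Cref{hypo:AC} does the work: \eqref{eq:AC} gives
\begin{align*}
  \lim_{m\to\infty}\limsup_{n\to\infty}\PP\Bigl(\max_{m\le|i|\le r_n}\|X_i\|>a_n u \,\Bigm|\, \|X_0\|>a_n u\Bigr)=0 \,,
\end{align*}
and, combined with a union bound over the (random) location of the maximum together with the fact that $\PP(M_{r_n}>a_n u)\sim \theta r_n \PP(\|X_0\|>a_n u)$ up to constants, this shows that with conditional probability tending to $1$ as $m\to\infty$ (uniformly in large $n$) the block, viewed in $\lo$, is within $\tilde d$-distance $\varepsilon$ of its $m$-window. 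On the limit side, the analogous statement for $\sequence{Z}$ follows from $\PP(\lim_{|t|\to\infty}\|Y_t\|=0)=1$. A standard ``convergence together'' argument (approximation by the $m$-windowed processes, letting $m\to\infty$) then yields $\law(\,\cdot\,/a_n u\mid M_{r_n}>a_n u)\dto\law(\sequence{Z})$ in $\lo$.

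For the independence of $\sequence{Q}$ and $L_Z$, I would deduce it from the corresponding property of the tail process: the spectral tail process $\sequence{\Theta}$ is independent of $\|Y_0\|$, and more generally the normalized sequence $\sequence{Z}/L_Z$ in $\lo$ should be independent of $L_Z=\sup_j\|Z_j\|$, because the Pareto scale $L_Z$ enters the law of $\sequence{Y}$ only as a multiplicative factor that is washed out by the normalization and by passing to the quotient. Concretely, one can write $\sequence{Z}$ (up to shift) as $L_Z\cdot\sequence{Q}$ with $L_Z$ Pareto$(\alpha)$ on $[1,\infty)$, and check that the conditional law of the windowed $\sequence{Q}$ given $L_Z=v$ does not depend on $v$; this is essentially a scaling computation using \eqref{eq:tailprocess}, \eqref{eq:theta} and the homogeneity in \eqref{def:RV_d}.

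The main obstacle I anticipate is the passage from finite-window convergence to convergence in $\lo$ — in particular making rigorous that the location of the maximum within the block does not cause trouble (one must handle the case where the maximum is attained near the block endpoints, and ensure the recentering-and-windowing map is measurable and plays well with the quotient metric $\tilde d$), and verifying that the anticlustering condition indeed kills the far-away coordinates \emph{conditionally} on $\{M_{r_n}>a_n u\}$ rather than merely on $\{\|X_0\|>a_n u\}$; this requires a careful summation over the possible positions of the maximum together with the known asymptotics $\PP(M_{r_n}>a_n u)\asymp r_n\PP(\|X_0\|>a_n u)$.
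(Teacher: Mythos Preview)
Your overall strategy---truncation, finite-window convergence, anticlustering to control the tails, then a convergence-together argument---is the same as the paper's. But there is one substantive difference and one genuine slip.

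\textbf{Recentering at the maximum versus at the first exceedance.} You propose to shift so that the location of the block maximum sits at the origin. The paper instead decomposes $\{M_{r_n}>a_nu\}$ according to the \emph{first} index $j$ with $\|X_j\|>a_nu$, then shifts so that this first exceedance sits at the origin. After stationarity this produces the event $\{M_{-m,-1}\le a_nu<\|X_0\|\}$, which in the limit is \emph{exactly} the conditioning $\{\sup_{i\le -1}\|Y_i\|\le 1\}$ defining $\sequence{Z}$. Your sentence ``$\sup_{i\le -1}\|Y_i\|\le 1$ says precisely that the maximum occurs at (or after) the origin'' is not right: that event says the first exceedance of level $1$ is at the origin, not that the overall maximum is there (there may well be $i\ge1$ with $\|Y_i\|>\|Y_0\|$). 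If you recenter at the maximum, the natural limiting object is $\sequence{Y}$ conditioned on $\sup_{i\ne 0}\|Y_i\|\le\|Y_0\|$, which is a different conditional law; it does coincide with $\law(\sequence{Z})$ \emph{as an element of $\lo$}, but that equality is itself a nontrivial fact (a time-change identity for the tail process) that you would have to prove. The paper's first-exceedance decomposition sidesteps this entirely and lands directly on the definition of~$Z$.

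\textbf{Truncation mechanism.} Relatedly, the paper does not window by index distance from a recentering point; it truncates by \emph{value}, replacing $g$ by $g_\zeta(\tbx)=g(\tbx_\zeta)$ where $\tbx_\zeta$ zeroes out coordinates of modulus at most $\zeta$. Since $\dtilde(\tbx,\tbx_\zeta)\le\zeta$, uniform continuity of $g$ reduces the problem to $g_\zeta$, which depends on only finitely many coordinates once a level is fixed. This is what makes the measurability and boundary issues you flag (maximum near the block endpoints, etc.) disappear: there is no random recentering map at all.

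\textbf{Independence of $\sequence{Q}$ and $L_Z$.} Your route via the independence of $\|Y_0\|$ and $\sequence{\Theta}$ can be made to work, but the paper's argument is shorter and worth knowing: having proved the convergence in $\lo$, apply the continuous maps $\tbx\mapsto(\tbx,\|\bx\|_\infty)$ and $(\tbx,b)\mapsto \tbx/b$ to get joint convergence of $\bigl((X_1,\ldots,X_{r_n})/M_{r_n},\,M_{r_n}/(a_nu)\bigr)$ to $(\sequence{Q},L_Z)$ conditionally on $\{M_{r_n}>a_nu\}$, and then compute
\[
\EE\bigl[g(\sequence{Q})\1{\{L_Z>v\}}\bigr]
=\lim_{n\to\infty}\EE\Bigl[g\Bigl(\tfrac{X_1,\ldots,X_{r_n}}{M_{r_n}}\Bigr)\Bigm| M_{r_n}>a_n v\Bigr]\cdot\frac{\PP(M_{r_n}>a_nv)}{\PP(M_{r_n}>a_n)}
=\EE[g(\sequence{Q})]\,v^{-\alpha},
\]
using the already-proved convergence at level $v$ and $\PP(L_Z>v)=v^{-\alpha}$. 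No windowing or conditioning on $L_Z=v$ is needed.
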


\begin{proof}
  \begin{enumerate}[{Step} 1,wide=0pt]
  \item We write $\bsX_n(i,j) =(X_{i},\dots,X_j)/a_nu$, $\bsY(i,j)=(Y_{i},\dots,Y_{j})$ and
    $M_{k,l}=\max_{k\leq i\leq l} \|X_i\|$, $M_{k,l}^Y=\max_{k\leq i\leq l}\|Y_i\|$. By the Portmanteau
    theorem \cite[Theorem~2.1]{billingsley:1968}, it suffices to prove that
  \begin{align}
    \lim_{n\to\infty} \EE[g(\bsX_n(1,r_n)) \mid M_{1,r_n} > a_nu ] & = \EE [g(\sequence{Y})
    \mid M_{-\infty,-1}^Y \leq 1 ] \; ,\label{eq:portmanteau}
  \end{align}
  for every nonnegative, bounded and uniformly continuous function $g$ on $(\lo,\dtilde)$.  

  Define the truncation $\tbx_\zeta$ at level $\zeta$ of $\tbx\in\lo$ by putting all the coordinates
  of $\tbx$ which are no greater in absolute value than $\zeta$ at zero, that is $\tbx_\zeta$ is the
  equivalence class of $(x_i\1{|x_i|>\zeta})_{i\in\ZZ}$, where $\bx$ is a representative of
  $\tbx$. Note that by definition, $\dtilde(\tbx,\tbx_\zeta)\leq\zeta$.

  For a function $g$ on $\lo$, define $g_\zeta$ by $g_\zeta(\tbx)=g(\tbx_\zeta)$.  If $g$ is
  uniformly continuous, then for each $\eta>0$, there exists $\zeta$ such that $|g(\tbx)-g(\tby)|\leq
  \eta$ if $\dtilde(\tbx,\tby)\leq \zeta$, that is,  $\|g-g_\zeta\|_\infty\leq\eta$.  Thus it is
  sufficent to prove~(\ref{eq:portmanteau}) for $g_\zeta$ for   $\zeta \in(0,1)$.

  One can now follow the steps of the proof of \cite[Theorem 4.3]{basrak:segers:2009}.  Decompose
  the event $\{M_{1,r_n} > a_nu\}$ according to the smallest $j\in\{1,\dots,r_n\}$ such that
  $\|X_j\|>a_nu.$ We have
  \begin{align}
    \EE[g_\zeta(\bsX_n(1,r_n))  ; M_{1,r_n} > a_nu ]=\sum_{j=1}^{r_n} \EE\left[g_\zeta(\bsX_n(1,r_n))
    ; M_{1,j-1}\leq a_nu<\|X_j\| \right] \; .  \label{eq:decomp-firstexceed}
  \end{align}
  Fix a positive integer $m$ and let $n$ be large enough so
  that $r_n\geq 2m+1.$ By the definition of $g_\zeta$, for all $j\in\{m+1,\dots,r_n-m\}$ we have
  that
   \begin{align}
     M_{1,j-m-1} \vee M_{j+m+1,r_n} \leq a_nu \zeta \Rightarrow g_\zeta(\bsX_n(1,r_n)) =
     g_\zeta(\bsX_n(j-m,j+m)) \; . \label{eq:equal-exceptif}
   \end{align}
   The proof is now exactly along the same lines as the proof of \cite[Theorem
   4.3]{basrak:segers:2009} and we omit some details.  Using stationarity, the
   decomposition~(\ref{eq:decomp-firstexceed}), the relation~(\ref{eq:equal-exceptif}) and the
   boundedness of $g$, we have,
   \begin{align}
     &     \left| \EE[g_\zeta(\bsX_n(1,r_n)) \ind{M_{1,r_n} > a_nu} ] \right.  \nonumber \\
     &  \left.  - r_n\EE\left[g_\zeta(\bsX_n(-m,m)) \ind{M_{-m,-1} \leq a_nu}\ind{\|X_0\| > a_nu} \right] \right| \nonumber \\
     &   \leq 2m \|g\|_\infty \PP(\|X_0\|>a_nu) + r_n\|g\|_\infty \PP (M_{-r_n,-m-1} \vee M_{m+1,r_n} >
       a_nu; \|X_0\|>a_nu) \; . \label{eq:argument}
   \end{align}
   Next define $\theta_n = \PP(M_{1,r_n}>a_nu) / \{r_n\PP(\|X_0\|>a_nu)\}$.  Under
   Assumption~\ref{hypo:AC}, 
   \begin{align}
     \lim_{n\to\infty} \theta_n = \PP(\sup_{i\geq1} \|Y_i\|\leq1) = \theta \; , 
     \label{eq:def-candidate-extremalindex}
   \end{align}
   where $\theta$ was defined in (\ref{eq:def-theta}). See \cite[Proposition~4.2]{basrak:segers:2009}.  Therefore, by \Cref{hypo:AC},
   (\ref{eq:argument}) and (\ref{eq:def-candidate-extremalindex}) we conclude that
   \begin{multline}
     \lim_{m\to\infty}\limsup_{n\to\infty}\bigg|\EE[g_\zeta(\bsX_n(1,r_n))  \mid M_{1,r_n} > a_nu ]\\
     -\frac{1}{\theta_n}\EE[g_\zeta(\bsX_n(-m,m)) ; M_{-m,-1} \leq a_nu \mid \|X_0\|>a_nu ]
     \bigg|=0\;. \label{eq:tightness-1}
   \end{multline}
   We now argue that, for every $m\geq 1$,
   \begin{multline}
     \lim_{n\to\infty} \EE [g_\zeta(\bsX_n(-m,m)); M_{-m,-1} \leq a_nu \mid \|X_0\| > a_nu]  \\
     = \EE [g_\zeta(\bsY(-m,m)); M_{-m,-1}^Y \leq 1] \; . \label{eq:argue}
   \end{multline}
   First observe that $g_\zeta$, as a function on $(\RR^d)^{2m+1}$, is continuous except maybe on the
   set
   $D_\zeta^{2m+1}=\{(x_1,\dots,x_{2m+1})\in(\RR^d)^{2m+1};|x_i|=\zeta \textnormal{ for some }
   i\in\{1,\dots, 2m+1\}\}$
   and we have that $\PP(\bsY(-m,m)\in D_\zeta^{2m+1})=0$ since $\bsY=\|Y_0\|\boldsymbol\Theta$,
   $\|Y_0\|$ and $\boldsymbol\Theta$ are independent and the distribution of $\|Y_0\|$ is Pareto
   therefore atomless. Observe similarly that the distribution of
   $M_{k,l}^{Y}=\|Y_0\|\max_{k\leq j\leq l} \|\Theta_j\|$ does not have atoms except maybe at
   zero. Therefore, since $g_\zeta$ is bounded, (\ref{eq:argue}) follows by the definition of the
   tail process and the continuous mapping theorem.
  
   Finally, since $\bsY(-m,m)\longrightarrow \bsY$ a.s. in $\lo$ and since $\bsY$ has only finitely
   many coordinates greater than $\zeta$, $g_\zeta(\bsY(-m,m))=g_\zeta(\bsY)$ for large enough $m$,
   almost surely. 
   Thus, applying~(\ref{eq:tightness-1}) and~(\ref{eq:argue}), we obtain by bounded convergence
   \begin{align*}
     \lim_{n\to\infty} 
     & \EE[g_\zeta(\bsX_n(1,r_n)) \mid M_{1,r_n} > a_nu] \\
     & = \lim_{m\to\infty}\lim_{n\to\infty}\frac{1}{\theta_n} \EE [g_\zeta(\bsX_n(-m,m)); M_{-m,-1}
       \leq a_nu \mid \|X_0\| > a_nu] \\
     & =   \frac{1}{\theta} \lim_{m\to\infty} \EE [g_\zeta(\bsY(-m,m)); M_{-m,-1}^Y\leq1 ]     \\
     & = \frac{1}{\theta} \EE [g_\zeta(\bsY); M_{-\infty,-1}^Y \leq 1] \; .
   \end{align*}
   Applying this to $g\equiv1$ we obtain $\theta=\PP(M_{-\infty,-1}^Y \leq 1)$. Hence
   (\ref{eq:portmanteau}) holds for $g_\zeta$ as we wanted to show.

 \item Observing that the mapping $\tilde{x} \mapsto(\tilde{x}, \|\bx\|_\infty )$ is continuous on
   $\lo$, we obtain for every~$u>0$,
  \begin{equation}
    \label{e:main_pomocni_1}
    \law \left( \frac{X_1,\ldots, X_{r_n}}{a_nu}\,, \, 
      \frac{M_{r_n}}{a_nu} \Big|\, M_{r_n} > a_nu\right) \dto \law
    \left(\{Z_t\} \,, \,  L_Z \right) \; .
  \end{equation}
  Similarly, the mapping defined on $\lo\times(0,\infty)$ by $(\tilde{x},b) \mapsto \tilde{x}/b$ is again continuous.  Hence,
  \eqref{e:main_pomocni_1} implies
  \begin{equation}
    \label{e:main_pomocni_2}
    \law \left( \frac{X_1,\ldots, X_{r_n}}{M_{r_n}}\,, \, 
      \frac{M_{r_n}}{a_nu} \Big|\, M_{r_n} > a_nu\right) \dto \law
    \left(\{Q_t\} \,, \,  L_Z \right)
  \end{equation}
  by the continuous mapping theorem.  To show the independence between $L_Z$ and $\{Q_t\}$, it
  suffices to show
  \begin{equation}
    \label{e:nez1}
    \EE \left[ g\left(\{Q_t\}\right) \mathbbm{1}_{\{L_Z >v\}} \right]=\EE \left[   g\left(\{Q_t\}\right) \right]  P({L_Z >v })\,,
  \end{equation}
  for an arbitrary uniformly continuous function $g$ on $\lo$ and $v \geq 1$.

  By \eqref{e:main_pomocni_2}, the left-hand side of \eqref{e:nez1} is the limit of
 $$\EE \left[ g \left(\frac{X_1,\ldots, X_{r_n}}{M_{r_n}} \right)\,\mathbbm{1}_{\{(a_n)^{-1}M_{r_n}>v\}}\bigg\vert M_{r_n}>a_n\right],$$
 which further equals
  $$\EE  \left[g \left(\frac{X_1,\ldots, X_{r_n}}{M_{r_n}} \right)
    \, \bigg\vert M_{r_n}>a_n v\right]\frac{\PP(M_{r_n}>a_n v)}{\PP(M_{r_n}>a_n )}.$$
  By \eqref{e:main_pomocni_2}, the first term in the product above tends to
  $\EE\left[ g\left(\{Q_t\}\right) \right]$ as $n\to\infty$. On the other hand, by
  (\ref{eq:def-candidate-extremalindex}) and regular variation of $\|X_0\|$, the second term tends
  to $v^{-\alpha}=\PP(L_Z>v)$.
\end{enumerate}
\end{proof}

\section{The point process of clusters}
\label{sec:poipro}
In this section we prove our main result on the point process asymptotics
for the sequence $\sequence{X}$. Prior to that, we discuss the topology of $w^\#$-convergence. 
\subsection{Preliminaries on $w^\#$-convergence}
\label{sec:preliminarues-weakstar}
To study convergence in distribution of point processes on the non locally-compact space $\lo$ we
use $w^{\#}$-convergence and refer to \cite[Section
A2.6.]{daley:verejones:2008t1} and \cite[Section 11.1.]{daley:verejones:2008t2} for details.  Let
$\mathbb{X}$ be a complete and separable metric space and let $\mathcal{M}(\mathbb{X})$ denote the
space of boundedly finite nonnegative Borel measures $\mu$ on $\mathbb{X}$, i.e. such that
$\mu(B)<\infty$ for all bounded Borel sets $B$. The subset of $\mathcal{M}(\mathbb{X})$ of all point
measures (that is measures $\mu$ such that $\mu(B)$ is a nonnegative integer for all bounded Borel sets $B$) is
denoted by $\mathcal{M}_p(\mathbb{X}).$ A sequence $\{\mu_n\}$ in $\mathcal{M}(\mathbb{X})$ is said
to converge to $\mu$ in the $w^\#$-topology, noted by $\mu_n\rightarrow_{w^\#}\mu$, if
\[
\mu_n(f)=\int f d\mu_n \rightarrow \int f d\mu=\mu(f),
\]
for every bounded and continuous function $f:\mathbb{X}\rightarrow \RR$ with bounded
support. Equivalently (\cite[Proposition A2.6.II.]{daley:verejones:2008t1}),
$\mu_n\rightarrow_{w^\#}\mu$ refers to
\[
\mu_n(B)\rightarrow\mu(B)
\] 
for every bounded Borel set $B$ with $\mu(\partial B)=0.$ We note that when $\mathbb{X}$ is
  locally compact, an equivalent metric can be chosen in which a set is relatively compact if and
  only if it is bounded, and $w^\#$-convergence coincides with vague convergence. We refer to
  \cite{kallenberg:2017} or \cite{resnick:1987} for details on vague convergence. The notion
of $w^\#$-convergence is metrizable in such a way that $\mathcal{M}(\mathbb{X})$ is Polish
(\cite[Theorem A2.6.III.(i)]{daley:verejones:2008t1}). Denote by
$\mathcal{B}(\mathcal{M}(\mathbb{X}))$ the corresponding Borel sigma-field.

It is known, see \cite[Theorem 11.1.VII]{daley:verejones:2008t2}, that a sequence $\{N_n\}$ of random
elements in $(\mathcal{M}(\mathbb{X}),\mathcal{B}(\mathcal{M}(\mathbb{X})))$, converges in
distribution to $N$, denoted by $N_n\dto N$, if and only if
\[
(N_n(A_1),\dots,N_n(A_k))\dto (N(A_1),\dots,N(A_k)) \textnormal{  in } \RR^k,
\]
for all $k\in\NN$ and all bounded Borel sets $A_1,\dots,A_k$ in $\mathbb{X}$ such that $N(\partial
A_i)=0$ a.s. for all $i=1,\dots,k$.

\begin{remark}
  \label{rem:LaplaceSmallerFamily}
  As shown in \cite[Proposition~11.1.VIII]{daley:verejones:2008t2}, this is equivalent to the
  pointwise convergence of the Laplace functionals, that is,
  $\lim_{n\to\infty}\EE[e^{-N_n(f)}]=\EE[e^{-N(f)}]$ for all bounded and continuous function $f$ on
  $\mathbb{X}$ with bounded support.  It turns out that it is sufficient (and more convenient in our
  context) to verify the convergence of Laplace functionals for a smaller convergence determining  family. 
  See the comments before \Cref{hypo:Aprimecluster}.
\end{remark}

\subsection{Point process convergence} 
\label{Subs:ppc} 
Consider now the space $\lo\setminus\{\tilde{\boldsymbol{0}}\}$ with the subspace
topology. Following  \cite{kallenberg:2017}, we metrize the space
$\lo\setminus\{\tilde{\boldsymbol{0}}\}$ with the complete metric
\[
\tilde{d}'(\tilde{\bx},\tilde{\by})=\left(\tilde{d}(\tilde{\bx},\tilde{\by})\wedge 1\right) \vee
\left|1/\|\tilde{\bx}\|_\infty - 1/\|\tilde{\by}\|_\infty\right|
\]
which is topologically equivalent to $\tilde{d}$, i.e.\ it generates the same (separable) topology
on $\lo\setminus\{\tilde{\boldsymbol{0}}\}$. However, a subset $A$ of
$\lo\setminus\{\tilde{\boldsymbol{0}}\}$ is bounded for $\tilde{d}'$ if and only if there exists
an $\epsilon>0$ such that $\tilde{x}\in A$ implies $\|\tilde{\bx}\|_\infty>\epsilon$. Therefore, for
measures $\mu_n,\mu\in\mathcal{M}(\lo\setminus\{\tilde{\boldsymbol{0}}\})$,
$\mu_n\rightarrow_{w^\#}\mu$ if $\mu_n(f)\rightarrow\mu(f)$ for every bounded and continuous
function $f$ on $\lo\setminus\{\tilde{\boldsymbol{0}}\}$ such that for some $\epsilon>0$,
$\|\tilde{\bx}\|_\infty\leq \epsilon$ implies $f(\tilde{\bx})=0$.
\begin{remark} 
  We note that under the metric $\tilde{d}'$, $w^\#$-convergence coincides with the theory of $M_0$-convergence introduced
  in \cite{hult:lindskog:2006}, further developed in \cite{lindskog2014regularly} and with the
  corresponding point process convergence recently studied by \cite{zhao2016point}.
\end{remark}

Take now a sequence $\{r_n\}$ as in \Cref{hypo:AC}, set $k_{n} = \lfloor n / r_{n} \rfloor$ and define
\[
\bsX_{n,i} =(X_{(i-1)r_n+1},\dots,X_{ir_n})/a_n
\] 
for $i=1,\dots,k_n.$ As the main result of this section we show, under certain conditions, the point
process of clusters $\PPC_n$ defined by
\[
\PPC_n = \sum_{i=1}^{k_n} \delta_{(i/k_n,\bsX_{n,i})}  \; 
\] 
restricted to
$[0,1] \times \lo\setminus\{\tilde{\boldsymbol{0}}\}$ (i.e.\ we ignore indices $i$ with  $\bsX_{n,i}=0$), converges in distribution in
$\mathcal{M}_p([0,1] \times \lo\setminus\{\tilde{\boldsymbol{0}}\})$ to a suitable Poisson point
process.

We first prove a technical lemma which is also of independent interest, see \Cref{rem:MW16}.  Denote
by $\mathbb{S}=\{\tilde{\bx}\in\lo:\|\tilde{\bx}\|_\infty=1\}$ the unit sphere in $\lo$ and define
the polar decomposition
$\psi:\lo\setminus\{\tilde{\boldsymbol{0}}\}\mapsto (0,+\infty)\times \mathbb{S}$ with
$\psi(\tilde{\bx})=(\|\tilde{\bx}\|_\infty,\tilde{\bx}/\|\tilde{\bx}\|_\infty)$. 
\begin{lemma}
  \label{lem:RVofCluster}
  Under \Cref{hypo:AC}, the sequence $\nu_n=k_n \PP(\bsX_{n,1} \in \cdot)$ in
  $\mathcal{M}(\lo\setminus\{\tilde{\boldsymbol{0}}\})$ converges to
  $\nu=\theta\left(  d(- y^{-\alpha})\times \PP_{\bsQ}\right)\circ \psi$ in $w^\#$-topology and
  $\PP_{\bsQ}$ is the distribution of $\{Q_j\}$ defined in \eqref{eq:q}.
\end{lemma}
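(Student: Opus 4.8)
The plan is to deduce this from \Cref{lem:conv-cluster} together with the regular variation of $L_Z$. First I would record what needs to be shown: for every bounded continuous $f$ on $\lo\setminus\{\tilde{\boldsymbol 0}\}$ vanishing on $\{\|\tilde\bx\|_\infty\le\epsilon\}$ for some $\epsilon>0$, one has $\nu_n(f)\to\nu(f)$. Since any such $f$ is supported on the $\tilde d'$-bounded set $\{\|\tilde\bx\|_\infty>\epsilon\}$, it suffices to work with sets of the form $\{\|\tilde\bx\|_\infty>u\}$ for $u>0$; more precisely, I would first establish the ``radial'' convergence
\[
  k_n\,\PP(\|\bsX_{n,1}\|_\infty>u)\;=\;k_n\,\PP(M_{r_n}>a_nu)\;\longrightarrow\;\theta u^{-\alpha}\,,
\]
which is exactly \eqref{eq:def-candidate-extremalindex} combined with the regular variation \eqref{eq:a} (note $k_n\sim n/r_n$, so $k_n\PP(M_{r_n}>a_nu)=\theta_n\cdot r_n\PP(\|X_0\|>a_nu)\cdot(k_n/k_n)\to\theta u^{-\alpha}$, with the obvious bookkeeping $k_n r_n\sim n$). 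This identifies the total mass $\nu(\{\|\tilde\bx\|_\infty>u\})=\theta u^{-\alpha}$, consistent with $\nu=\theta\,(\mathrm d(-y^{-\alpha})\times\PP_{\bsQ})\circ\psi$ since $\PP_{\bsQ}$ is a probability measure.

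Next I would upgrade this to convergence of the full measure via the polar decomposition. For $u>0$ and a bounded continuous $h$ on $\mathbb S$, consider the functional $\tilde\bx\mapsto \1{\|\tilde\bx\|_\infty>u}\,h(\tilde\bx/\|\tilde\bx\|_\infty)$; up to the usual $\epsilon$-approximation these functions (together with their analogues for a convergence-determining class of $h$) determine $w^\#$-convergence on $\lo\setminus\{\tilde{\boldsymbol 0}\}$, because boundedness there means bounded away from $\tilde{\boldsymbol 0}$ in $\|\cdot\|_\infty$. Writing the conditional expectation,
\[
  k_n\,\EE\!\left[\1{M_{r_n}>a_nu}\,h\!\left(\tfrac{\bsX_{n,1}}{\|\bsX_{n,1}\|_\infty}\right)\right]
  \;=\;k_n\PP(M_{r_n}>a_nu)\;\EE\!\left[h\!\left(\tfrac{X_1,\dots,X_{r_n}}{M_{r_n}}\right)\,\Big|\,M_{r_n}>a_nu\right],
\]
the first factor converges to $\theta u^{-\alpha}$ by the previous step, and the second factor converges to $\EE[h(\{Q_t\})]$ by \eqref{e:main_pomocni_2} in the proof of \Cref{lem:conv-cluster} and the continuous mapping theorem (the map $\tilde\bx\mapsto h(\tilde\bx/\|\tilde\bx\|_\infty)$ is continuous and bounded on $\lo\setminus\{\tilde{\boldsymbol 0}\}$). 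Hence the limit is $\theta u^{-\alpha}\,\EE_{\bsQ}[h]=\nu(\1{\|\tilde\bx\|_\infty>u}\,h(\psi_2(\cdot)))$, where I use that $L_Z$ and $\{Q_t\}$ are independent so that $\nu$ factorizes as $\theta$ times the product of $\mathrm d(-y^{-\alpha})$ on the radial part and $\PP_{\bsQ}$ on the spherical part.

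Finally I would assemble these pieces: a general bounded continuous $f$ vanishing near $\tilde{\boldsymbol 0}$ can be uniformly approximated by finite linear combinations of products $\1{\|\tilde\bx\|_\infty>u_j}\,h_j(\tilde\bx/\|\tilde\bx\|_\infty)$ (using uniform continuity of $f$ on the compact-in-$\|\cdot\|_\infty^{-1}$ region and choosing the radial grid so that $\nu$ charges no boundary $\{\|\tilde\bx\|_\infty=u_j\}$, which is possible since $u\mapsto\theta u^{-\alpha}$ is continuous), and then pass to the limit. The main obstacle, as I see it, is the approximation/measure-theoretic bookkeeping in this last step — making precise that the two-parameter family above is convergence-determining for $w^\#$-convergence on the nonlocally compact space $\lo\setminus\{\tilde{\boldsymbol 0}\}$, and handling the $\|\cdot\|_\infty=u$ boundary issue — rather than any genuinely new probabilistic input, since all the hard limit theory is already contained in \Cref{lem:conv-cluster}. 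Alternatively, and perhaps more cleanly, one can argue directly with the Portmanteau characterization of $w^\#$-convergence via $\nu(\partial B)=0$ sets $B$, applied to sets $B=\psi^{-1}((u,\infty)\times S)$ with $\PP_{\bsQ}(\partial S)=0$ and $u$ not an atom, which are a convergence-determining class; I would likely present the proof in that form.
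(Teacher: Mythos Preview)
Your approach is correct and rests on the same key input (\Cref{lem:conv-cluster}), but the paper's proof is more direct and sidesteps precisely the approximation/bookkeeping step you flag as the main obstacle. Rather than decomposing into radial and spherical parts first and then reassembling via a convergence-determining class, the paper applies the full weak convergence statement of \Cref{lem:conv-cluster} directly to the general test function $f$: since $f(\tbx)=0$ for $\|\tbx\|_\infty\le\epsilon$, one has
\[
k_n\,\EE[f(\bsX_{n,1})]\;=\;k_n\,\PP(M_{r_n}>\epsilon a_n)\cdot\EE\bigl[f(\bsX_{n,1})\mid M_{r_n}>\epsilon a_n\bigr]\,,
\]
the first factor tends to $\theta\epsilon^{-\alpha}$ (your radial step), and the second tends to $\EE[f(\epsilon\bsZ)]$ by the first conclusion of \Cref{lem:conv-cluster}. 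Only \emph{after} obtaining this limit does the paper invoke the independence of $L_Z$ and $\bsQ$ to rewrite
\[
\epsilon^{-\alpha}\theta\,\EE[f(\epsilon\bsZ)]\;=\;\epsilon^{-\alpha}\theta\int_1^\infty\EE[f(\epsilon y\bsQ)]\,\alpha y^{-\alpha-1}\,\rmd y\;=\;\theta\int_\epsilon^\infty\EE[f(y\bsQ)]\,\alpha y^{-\alpha-1}\,\rmd y\,,
\]
and the support condition on $f$ extends the integral to $(0,\infty)$, giving $\nu(f)$. Thus the polar decomposition enters only algebraically at the level of the limit, not as an approximation scheme for test functions; this entirely avoids the convergence-determining-class issue on the nonlocally compact space $\lo\setminus\{\tilde{\boldsymbol 0}\}$ that you worried about. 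Your route would work too, but is longer for no gain.
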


\begin{proof}
  Let $f$ be a bounded and continuous function on $\lo\setminus\{\tilde{\boldsymbol{0}}\}$ and
  $\epsilon>0$ such that $f(\tbx)=0$ if $\|\tbx\|_\infty \leq \epsilon.$ Then
  $\EE[f(\bsX_{n,1})] = \EE[f(\bsX_{n,1})\mathbbm{1}_{\{M_{1,r_n}>\epsilon a_n\}}],$ so by
  (\ref{eq:a}), (\ref{eq:def-candidate-extremalindex}) and \Cref{lem:conv-cluster} we get
  \begin{align*}
    \lim_{n\to\infty} \nu_n(f)&=\lim_{n\to\infty} k_n\EE[f(\bsX_{n,1})] \\
    & = \lim_{n\to\infty} n \PP(|X_0| >\epsilon a_n) \frac{\PP(M_{1,r_n}>\epsilon a_n)}
    {r_n\PP(|X_0| >\epsilon a_n)} \EE[f(\bsX_{n,1}) \mid M_{1,r_n}>\epsilon a_n]    \\
    & = \epsilon^{-\alpha}\theta\EE[f(\epsilon \bsZ)] \; .
  \end{align*}
  Applying \Cref{lem:conv-cluster}, the last expression is equal to
  $$
  \epsilon^{-\alpha}\theta\int_1^\infty\EE[f(\epsilon y\bsQ)]\alpha
  y^{-\alpha-1}dy=\theta\int_\epsilon^\infty\EE[f(y\bsQ)]\alpha y^{-\alpha-1} dy \; .
  $$
  Finally, since $\|\bsQ\|_\infty=1$ a.s. and $f(\tbx)=0$ if $\|\tbx\|_\infty \leq \epsilon$ we have
  that
  $$
  \lim_{n\to\infty} \nu_n(f)=\theta\int_0^\infty\EE[f(y\bsQ)]\alpha y^{-\alpha-1}dy=\nu(f) \; ,
  $$ 
  by definition of $\nu$.
\end{proof}

\begin{remark} \label{rem:MW16}
  The previous lemma is closely related to the large deviations result obtained in Mikosch and
  Wintenberger~\cite[Theorem 3.1]{mikosch:wintenberger:2016}. For a class of functions $f$ called
  cluster functionals, which can be directly linked to the functions we used in the proof of
  \Cref{lem:RVofCluster}, they showed that
  \begin{equation}
    \label{e:MW1}
    \lim_{n\to\infty}\frac{\EE[f(a_n^{-1}X_1,\dots,a_n^{-1}X_{r_n})]}{r_n\PP(|X_0| >a_n)}=\int_0^\infty
    \EE[f(y\{\Theta_t, t\geq 0\})-f(y\{\Theta_t, t\geq 1\})] \alpha y^{-\alpha-1} dy \;.
  \end{equation}
  However, for an arbitrary bounded measurable function $f:\lo \rightarrow \RR$ which is a.e. continuous with respect to $\nu$ and such that for some $\epsilon>0$,
$\|\tilde{\bx}\|_\infty\leq \epsilon$ implies $f(\tilde{\bx})=0$, \Cref{lem:RVofCluster} together with a continuous mapping argument and the fact that $k_n^{-1} \sim r_n\PP(|X_0| >a_n)$ yields
  \[
  \lim_{n\to\infty}\frac{\EE[f(a_n^{-1}X_1,\dots,a_n^{-1}X_{r_n})]}{r_n\PP(|X_0|
    >a_n)}=\lim_{n\to\infty} \nu_n(f)=\theta\int_0^\infty\EE[f(y\bsQ)]\alpha y^{-\alpha-1}dy\,,
  \]
  which
  gives an alternative and arguably more interpretable expression for the limit in \eqref{e:MW1}.
\end{remark}

To show convergence of $\PPC_n$ in
$\mathcal{M}_p([0,1]\times \lo\setminus\{\tilde{\boldsymbol{0}}\}),$ we will need to assume that, intuitively speaking, one can break
the dependent series $\{X_n, n\geq 1\}$ into asymptotically independent blocks. 

Recall the notion of truncation of an element $\tbx\in\lo$ at level $\epsilon>0$ denoted by
$\tbx_\epsilon$, see paragraph following \eqref{eq:portmanteau}.  Using similar arguments as in the
proof of \Cref{lem:conv-cluster}, it can be shown that the class of nonnegative functions $f$ on
$\lo\setminus\{\tilde{\boldsymbol{0}}\}$ which depend only on coordinates greater than some
$\epsilon>0$, i.e. they satisfy $ f(t,\tbx) = f(t, \tbx_\epsilon)$, and are continuous except maybe
on the set
$\{\tbx \in \lo\setminus\{\tilde{\boldsymbol{0}}\} : \|x_j\|=\epsilon \; \text{for some}\; j\in \ZZ
\;\text{where} \;(x_i)_{i\in\ZZ}\in \tbx \}$,
is convergence determining (in the sense of \Cref{rem:LaplaceSmallerFamily}).  We denote this class
by $\mathcal{F}_+$.

\begin{hypothesis}
  \label{hypo:Aprimecluster}
  There exists a sequence of integers $\{r_n,n\in\NN\}$ such that
  $\lim_{n\to\infty}r_n=\lim_{n\to\infty}n/r_n =\infty$ and
  \begin{align*}
    \lim_{n\to\infty} \left( \EE[\rme^{-\PPC_n(f)}] - \prod_{i=1}^{k_{n}} \EE \biggl[ \exp \biggl\{
      - f \biggl(\frac{i}{k_n},\bsX_{n,i} \biggr) \biggr\} \biggr] \right) = 0 \; ,
  \end{align*}
  for all $f\in \mathcal{F}_+$.
\end{hypothesis}

This assumption is somewhat stronger than the related conditions in \cite{davis:hsing:1995} or
\cite{basrak:krizmanic:segers:2012}, cf. Condition 2.2 in the latter paper, since we consider
functions of the whole cluster. Still, as we show in \Cref{lem:beta-implies-A}, $\beta$-mixing
implies \Cref{hypo:Aprimecluster}. Since sufficient conditions for $\beta$-mixing are well studied
and hold for many standard time series models (linear processes are notable exception, note), one
can avoid cumbersome task of checking the assumption above directly. Linear processes are considered
separately in \Cref{subsec:linear}.

It turns out that the choice of $\lo$ as the state space for clusters, together with the results
above, does not only preserve the order of the observations within the cluster, but also makes the
statement and the proof of the following theorem remarkably tidy and short.
\begin{theorem}
  \label{thm:PPconvInLo}
  Let $\sequence{X}$ be a stationary $\Rset^d$-valued regularly varying sequence with tail index $\alpha$,
  satisfying \Cref{hypo:AC,hypo:Aprimecluster} for the same sequence $\{r_n\}$. Then $\PPC_n \dto
  \PPC$ in $\mathcal{M}_p([0,1]\times \lo\setminus\{\tilde{\boldsymbol{0}}\})$ where $N''$ is a Poisson point process with intensity
  measure $Leb \times \nu$ which can be expressed as
  \begin{align}
    \PPC = \sum_{i=1}^\infty\delta_{(T_i, P_i\bsQ_{i})} \; , \label{eq:Repr}
  \end{align}
  where
  \begin{enumerate}[(i)]
  \item $\sum_{i=1}^\infty\delta_{(T_i,P_i)}$ is a Poisson point process on $[0,1]\times(0,\infty]$
    with intensity measure $Leb \times d(-\theta y^{-\alpha})$;
  \item $\{\bsQ_i,\, i\geq 1\}$ is a sequence of \iid\ elements in $\mathbb{S}$, independent of
    $\sum_{i=1}^\infty\delta_{(T_i, P_i)}$ and with common distribution equal to the distribution of
    $\bsQ$ in  \eqref{eq:q}.
  \end{enumerate}
\end{theorem}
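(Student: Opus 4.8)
The plan is to verify the Laplace-functional criterion of \Cref{rem:LaplaceSmallerFamily}, i.e.\ to show $\EE[\rme^{-\PPC_n(f)}]\to\EE[\rme^{-\PPC(f)}]$ for every $f\in\mathcal{F}_+$, since by the discussion preceding \Cref{hypo:Aprimecluster} this class is convergence determining. First I would invoke \Cref{hypo:Aprimecluster} to replace $\EE[\rme^{-\PPC_n(f)}]$ by the product $\prod_{i=1}^{k_n}\EE[\exp\{-f(i/k_n,\bsX_{n,i})\}]$ up to a vanishing error. By stationarity each factor equals $\EE[\exp\{-f(i/k_n,\bsX_{n,1})\}] = 1 - \EE[1-\rme^{-f(i/k_n,\bsX_{n,1})}]$. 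Since $f\in\mathcal{F}_+$ vanishes on a neighbourhood of $\tilde{\boldsymbol 0}$, the map $\tbx\mapsto 1-\rme^{-f(t,\tbx)}$ is, for each fixed $t$, a bounded function vanishing near $\tilde{\boldsymbol 0}$ which is $\nu$-a.e.\ continuous, so \Cref{lem:RVofCluster} applies and $k_n\EE[1-\rme^{-f(i/k_n,\bsX_{n,1})}]$ behaves like $\nu_n(1-\rme^{-f(i/k_n,\cdot)})\to\nu(1-\rme^{-f(i/k_n,\cdot)})$; uniform control in $t$ over a compact set (using uniform continuity of $f$ in its first argument, which one may assume by a further approximation, or a direct equicontinuity estimate) then gives
\[
\prod_{i=1}^{k_n}\EE\bigl[\exp\{-f(i/k_n,\bsX_{n,1})\}\bigr]
= \exp\Bigl\{-\sum_{i=1}^{k_n} k_n^{-1}\,k_n\EE[1-\rme^{-f(i/k_n,\bsX_{n,1})}] + o(1)\Bigr\}
\longrightarrow \exp\Bigl\{-\int_0^1\!\!\int_{\lo\setminus\{\tilde{\boldsymbol 0}\}} (1-\rme^{-f(t,\tbx)})\,\rmd t\,\nu(\rmd\tbx)\Bigr\},
\]
using $-\log(1-x)=x+O(x^2)$ and $\sum_i (k_n\EE[1-\rme^{-f(i/k_n,\cdot)}])^2 = O(k_n^{-1})\to 0$. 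The right-hand side is exactly the Laplace functional of a Poisson point process with intensity $\mathrm{Leb}\times\nu$, which identifies the limit $\PPC$.

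It then remains to establish the cluster representation \eqref{eq:Repr}. Since $\nu = \theta\,(\rmd(-y^{-\alpha})\times\PP_{\bsQ})\circ\psi$ by \Cref{lem:RVofCluster}, I would push the Poisson process $\PPC$ forward through the polar decomposition $\psi$: the image is a Poisson process on $[0,1]\times(0,\infty]\times\mathbb{S}$ with intensity $\mathrm{Leb}\times\rmd(-\theta y^{-\alpha})\times\PP_{\bsQ}$, which is precisely a marked Poisson process whose ground process $\sum_i\delta_{(T_i,P_i)}$ on $[0,1]\times(0,\infty]$ has intensity $\mathrm{Leb}\times\rmd(-\theta y^{-\alpha})$ and whose marks $\bsQ_i$ are \iid\ with law $\PP_{\bsQ}$ and independent of the ground process — this is the standard marking theorem for Poisson processes. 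Mapping back via $\psi^{-1}(y,\tilde{\boldsymbol q}) = y\tilde{\boldsymbol q}$ gives $\PPC=\sum_i\delta_{(T_i,P_i\bsQ_i)}$, establishing (i) and (ii). Here I use that $\PP_{\bsQ}$ is supported on $\mathbb{S}=\{\|\tbx\|_\infty=1\}$, which holds by construction \eqref{eq:q}.

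The main obstacle I anticipate is the passage from the Riemann-type sum $\sum_{i=1}^{k_n} k_n^{-1}\bigl(k_n\EE[1-\rme^{-f(i/k_n,\bsX_{n,1})}]\bigr)$ to the integral $\int_0^1\nu(1-\rme^{-f(t,\cdot)})\,\rmd t$: \Cref{lem:RVofCluster} only gives convergence of $\nu_n(h)\to\nu(h)$ for each fixed test function $h$, whereas here the test function $1-\rme^{-f(i/k_n,\cdot)}$ varies with $i$ (and $n$). One has to upgrade this to convergence that is uniform over the relevant family — the natural route is to exploit that all these functions vanish on the common neighbourhood $\{\|\tbx\|_\infty\le\epsilon\}$ and share a common modulus of continuity inherited from $f$, so that Dini- or Arzelà–Ascoli-type arguments make $\nu_n\to\nu$ uniform on this equicontinuous, uniformly bounded, uniformly compactly-supported family; alternatively one first approximates $f$ by functions that are piecewise constant in $t$, handles each piece by \Cref{lem:RVofCluster}, and controls the approximation error uniformly in $n$. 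Everything else — the $O(x^2)$ Taylor bound, the negligibility of the product-vs-exponential correction, and the Poisson marking step — is routine.
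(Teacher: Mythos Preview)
Your proposal is correct and follows essentially the same route as the paper. The paper introduces an auxiliary process $N_n^*$ built from i.i.d.\ copies of $\bsX_{n,1}$, obtains $N_n^*\dto\PPC$ by invoking \Cref{lem:RVofCluster} together with a straightforward adaptation of \cite[Proposition~3.21]{resnick:1987}, and then transfers the conclusion to $\PPC_n$ via \Cref{hypo:Aprimecluster}; your Laplace-functional computation is precisely what that cited adaptation amounts to (and the uniformity-in-$t$ issue you flag is exactly the content of that adaptation), while the representation \eqref{eq:Repr} is handled identically via the polar decomposition and the Poisson marking/transformation argument.
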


\begin{proof}
  Let, for every $n\in\NN$, $\{\bsX_{n,i}^*,i=1,\dots,k_n\}$ be independent copies of~$\bsX_{n,1}$
  and define
  \begin{equation}
    N_n^*  = \sum_{i=1}^{k_n} \delta_{(i/k_n, \bsX_{n,i}^*)}\,.
  \end{equation} 
  Since by the previous lemma, $k_n \PP(\bsX_{n,1} \in \cdot)\rightarrow_{w^\#}\nu$ in
  $\mathcal{M}(\lo\setminus\{\tilde{\boldsymbol{0}}\})$, the convergence of $N_n^*$ to $N''$ in
  $\mathcal{M}_p([0,1]\times\lo\setminus\{\tilde{\boldsymbol{0}}\})$ follows by an straightforward
  adaptation of \cite[Proposition~3.21]{resnick:1987} (cf. \cite[Lemma 2.2.(1)]{davis2008extreme},
  see also \cite[Theorem~2.4]{dehaan:lin:2001} or
  \cite[Proposition~2.13]{roueff:soulier:2015}). \Cref{hypo:Aprimecluster} now yields that $\PPC_n$
  converge in distribution to the same limit since the convergence determining family
  $\mathcal{F}_+$ consists of functions which are a.e. continuous with respect to the measure
  $Leb\times \nu$. Finally, the representation of $\PPC$ follows easily by standard Poisson point
  process transformation argument (see~\cite[Section 3.3.2.]{resnick:1987}).
\end{proof}

Under the conditions of \Cref{thm:PPconvInLo}, as already noticed in
\cite[Remark~4.7]{basrak:segers:2009}, $\theta=\PP(\sup_{i\geq1} \|Y_i\|\leq1)$ is also the extremal
index of the time series $\{\|X_j\|\}$, \ie\
$\lim_{n\to\infty} \PP(a_n^{-1}\max_{1\leq i \leq n} \|X_i\| \leq x) = \rme^{-\theta x^{-\alpha}}$ for
all $x>0$.
  
\begin{remark}
  Note that the restriction to the time interval $[0,1]$ is arbitrary and it could be substituted by
  an arbitrary closed interval $[0,T]$.  
\end{remark}

\subsection{Linear processes}
\label{subsec:linear}
As we observed above, $\beta$-mixing offers a way of establishing \Cref{hypo:Aprimecluster} for a
wide class of time series models.  However, for linear processes, the truncation method offers an
alternative and simpler way to obtain the point process convergence stated in the previous theorem.

Let $\{\xi_t,t\in\ZZ\}$ be a sequence of \iid\ random variables with regularly varying distribution
with index $\alpha >0$. Consider the linear process $\{X_t,t\in\ZZ\}$ defined by
\begin{align*}
  X_t = \sum_{j\in\ZZ} c_j \xi_{t-j} \; , 
\end{align*}
where $\{c_j,j\in\ZZ\}$ is a sequence of real numbers such that $|c_0|>0$ and
\begin{align}   
  \label{eq:condition-linear}
  \sum_{j\in\ZZ} |c_j|^\delta < \infty \mbox{ with }
  \begin{cases}
    \delta<\alpha & \mbox{ if } \alpha\in      (0,1]  \; , \\
    \delta<\alpha & \mbox{ if } \alpha\in      (1,2] \mbox{ and }  \EE[\xi_0]=0 \; , \\
    \delta=2 & \mbox{ if } \alpha>2 \mbox{ and }  \EE[\xi_0]=0 \; , \\
\delta = 1 & \mbox{ if } \alpha>1 \mbox{ and }  \EE[\xi_0]\ne0 \; .
  \end{cases}
\end{align}
These conditions imply that $\sum_{j\in\ZZ} |c_j|^\alpha<\infty$. Furthermore, it has been 
proved in
\cite[Lemma~A3]{mikosch:samorodnitsky:2000} that the sequence $\{X_t,t\in\ZZ\}$ is well defined,
stationary and regularly varying with tail index $\alpha$ and
\begin{align}
  \lim_{u\to\infty} \frac{\PP(|X_0|>u)}{\PP(|\xi_0|>u)}=\sum_{j\in\ZZ} |c_j|^\alpha \; .  \label{eq:tail-equivalence-linear}
\end{align}
\cite[Lemma~A.4]{mikosch:samorodnitsky:2000} proved that for $\alpha\in(0,2]$, it is possible to
take $\delta=\alpha$ in (\ref{eq:condition-linear}) at the cost of some restrictions on the
distribution of $\xi_0$ which are satisfied for Pareto and stable distributions. 

The spectral tail process $\{\Theta_t\}$ of the linear process was computed in \cite{meinguet:segers:2010}.  It can be
described as follows: let $\Theta^\xi$ be an $\{-1,1\}$-valued random variable with distribution
equal to the spectral measure of $\xi_0$. Then
\begin{align}
  \mathcal{L}\left(\{\Theta_t, t\in\ZZ \}\right) 
  = \mathcal{L}\left(\left\lbrace\frac{c_{t+\intrv}}{|c_\intrv|} \Theta^\xi,t\in\ZZ\right\rbrace\right), \label{eq:spectralPr_of_linearPr}
\end{align} 
where $\intrv$ is an integer valued random variable, independent of $\Theta^\xi$, such that
\begin{align}
  \PP(\intrv=n) = \frac{|c_n|^\alpha} {\sum_{j\in\ZZ} |c_j|^\alpha},\; n\in\ZZ \; . \label{eq:def-intrv}
\end{align}
It is also proved in \cite{meinguet:segers:2010} that  the coefficient 
$\theta$ from~(\ref{eq:def-theta}) is given by
\begin{align*}
  \theta = \frac{\max_{j\in\ZZ} |c_j|^\alpha}{\sum_{j\in\ZZ} |c_j|^\alpha} \; .
\end{align*}
Since $\lim_{|j|\to\infty} c_j=0$, 
random element  $\bsQ = \sequence{Q}[\ZZ][j]$ in
the space $\mathbb{S}$ introduced in (\ref{eq:q}) is well defined and given by
\begin{equation}\label{eq:Qlin}
\bsQ = \left\{ \frac{\Theta^\xi c_j}{\max_{i\in\ZZ}|c_i|} \; , j\in\ZZ
\right\} \; .
\end{equation}

The following proposition can be viewed as an extension of \cite[Theorem 2.4]{davis:resnick:1985l} and also as a version of \Cref{thm:PPconvInLo} adapted to linear processes.

\begin{proposition}
  \label{prop:ppconv-linear}
  Let $\{r_n\}$ be a nonnegative non decreasing integer valued sequence such that
  $\lim_{n\to\infty} r_n =\lim_{n\to\infty}n/r_n =\infty$ and let $\{a_n\}$ be a non decreasing sequence such
  that $n\PP(|X_0|>a_n)\to 1$. Then
  \begin{equation}
    \label{eq:MAinfPPC-linear}
    \PPC_n=    \sum_{i=1}^{k_n} \delta_{(i/k_n,(X_{(i-1)r_n+1},\dots,X_{ir_n})/a_n})\dto \sum_{i=1}^\infty\delta_{(T_i, P_i \bsQ_{i})} \;
  \end{equation}
 in $\mathcal{M}_p([0,1] \times \lo\setminus\{\tilde{\boldsymbol{0}}\})$ where $\sum_{i=1}^\infty\delta_{(T_i,P_i)}$ is a Poisson point process on $[0,1]\times(0,\infty]$
  with intensity measure $Leb \times d(-\theta y^{- \alpha})$, independent of the \iid\ sequence
  $\{\bsQ_{i},\,i\geq 1\}$ with values in $\mathbb{S}$ and common distribution equal to the distribution of
  $\bsQ$ in \eqref{eq:Qlin}.

\end{proposition}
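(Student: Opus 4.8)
The plan is to establish \Cref{prop:ppconv-linear} by the truncation method, following the classical approach of \cite{davis:resnick:1985l}. First I would approximate the linear process by a finite-order moving average. For a fixed integer $q\geq 1$, set $X_t^{(q)} = \sum_{|j|\leq q} c_j \xi_{t-j}$, which is a $2q$-dependent stationary sequence, and define the associated point process of clusters $\PPC_n^{(q)}$ exactly as in \eqref{eq:MAinfPPC-linear} but with $X_t$ replaced by $X_t^{(q)}$. The idea is to let $n\to\infty$ first (with $q$ fixed), then $q\to\infty$, and control the error of truncation uniformly in $n$.

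\medskip

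\noindent\textbf{Step 1: the truncated process.} For fixed $q$, the sequence $\{X_t^{(q)}\}$ is $m$-dependent (hence $\beta$-mixing with mixing coefficients that vanish for lags exceeding $2q$), stationary, and regularly varying with index $\alpha$; its tail process and the quantities $\theta^{(q)}$, $\bsQ^{(q)}$ can be read off from \eqref{eq:spectralPr_of_linearPr}--\eqref{eq:Qlin} with the sums restricted to $|j|\leq q$. Since $m$-dependence implies \Cref{hypo:AC} and \Cref{hypo:Aprimecluster}, \Cref{thm:PPconvInLo} applies directly and gives, as $n\to\infty$,
\[
\PPC_n^{(q)} \dto \PPC^{(q)} := \sum_{i=1}^\infty \delta_{(T_i^{(q)}, P_i^{(q)} \bsQ_i^{(q)})}
\]
in $\mathcal{M}_p([0,1]\times\lo\setminus\{\tilde{\boldsymbol{0}}\})$, where $\sum_i \delta_{(T_i^{(q)},P_i^{(q)})}$ is Poisson with intensity $Leb\times d(-\theta^{(q)} y^{-\alpha})$ and the $\bsQ_i^{(q)}$ are \iid\ with law that of \eqref{eq:Qlin} truncated at $q$.

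\medskip

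\noindent\textbf{Step 2: convergence of the limits as $q\to\infty$.} Since $\sum_{j\in\ZZ}|c_j|^\alpha<\infty$ and $|c_0|>0$, one has $\theta^{(q)}\to\theta$ and the truncated maximum-normalized coefficient sequences $\{c_j\1{|j|\leq q}/\max_{|i|\leq q}|c_i|\}_j$ converge in $\lo$ to $\{c_j/\max_{i}|c_i|\}_j$; hence $\bsQ^{(q)}\dto\bsQ$ in $\mathbb{S}$. A standard continuity argument for Poisson point processes (continuity of the intensity in the $w^\#$-topology together with convergence of the iid marks) then yields $\PPC^{(q)}\dto\PPC$ as $q\to\infty$, where $\PPC$ is the process on the right-hand side of \eqref{eq:MAinfPPC-linear}.

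\medskip

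\noindent\textbf{Step 3: the truncation error (the main obstacle).} The crucial estimate is that for every $f\in\mathcal{F}_+$ (or, since we are on a Polish space, for every bounded continuous $f$ with support bounded away from $\tilde{\boldsymbol{0}}$, say depending only on coordinates exceeding $\epsilon$),
\[
\lim_{q\to\infty}\limsup_{n\to\infty}\PP\bigl(|\PPC_n(f)-\PPC_n^{(q)}(f)|>\eta\bigr)=0
\quad\text{for all }\eta>0.
\]
The difference $\PPC_n(f)-\PPC_n^{(q)}(f)$ is controlled by the number of blocks $i\leq k_n$ in which $a_n^{-1}\max_{(i-1)r_n<t\leq ir_n}|X_t - X_t^{(q)}|$ is non-negligible, or in which the cluster shapes of $\bsX_{n,i}$ and $\bsX_{n,i}^{(q)}$ differ on the relevant scale. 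The key analytic input is the tail bound: writing $\tilde X_t := X_t - X_t^{(q)} = \sum_{|j|>q} c_j\xi_{t-j}$, one uses \eqref{eq:tail-equivalence-linear} applied to the ``tail'' coefficient sequence together with a maximal inequality (of Markov type via a moment of order $\delta$ as in \eqref{eq:condition-linear}, as in \cite[Lemma A.3]{mikosch:samorodnitsky:2000}) to show
\[
\limsup_{n\to\infty} n\,\PP\Bigl(\max_{1\leq t\leq r_n}|\tilde X_t|>\delta' a_n\Bigr)\Big/ r_n \leq C(\delta')^{-\alpha}\sum_{|j|>q}|c_j|^\alpha \xrightarrow[q\to\infty]{} 0,
\]
and similarly that the contribution of the discarded coefficients to each cluster shape is small. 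Summing over the $k_n\sim n/r_n$ blocks and passing to the limit produces the desired negligibility. Combining this with Steps 1--2 via the standard "triangular approximation" lemma (e.g.\ \cite[Theorem 3.2]{billingsley:1968}) gives $\PPC_n\dto\PPC$. I expect Step 3 — in particular obtaining the uniform-in-$n$ maximal tail bound for the truncation remainder with the exponent/moment bookkeeping of \eqref{eq:condition-linear} handled correctly across the four regimes of $\alpha$ — to be the technically delicate part; everything else is either a direct application of \Cref{thm:PPconvInLo} or a routine Poisson continuity argument.
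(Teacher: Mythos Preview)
Your strategy is correct and is essentially the same as the paper's: truncate to a finite-order moving average, apply \Cref{thm:PPconvInLo} to the $m$-dependent approximation, and close the triangle with the Davis--Resnick maximal bound \eqref{eq:lemmaDR1985} (your Step~3 is exactly this, and the paper cites \cite[Lemma~2.3]{davis:resnick:1985l} rather than redoing the moment bookkeeping you outline).

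The one noteworthy difference is in the choice of normalization. You normalize the truncated process $X^{(q)}$ by $a_n$, which forces you in Step~2 to track $\theta^{(q)}\to\theta$ and $\bsQ^{(q)}\to\bsQ$ and then argue continuity of the compound Poisson law in these parameters. The paper instead normalizes everything by $b_n$, the scale of the \emph{innovations} $\xi_t$ (so $n\PP(|\xi_0|>b_n)\to1$). With this choice the limit of the truncated point process is $\sum_i \delta_{(T_i,\,P_i\Theta_i^\xi\{c_j^{(m)}\})}$ where $\sum_i\delta_{(T_i,P_i)}$ is Poisson with intensity $Leb\times d(-y^{-\alpha})$, \emph{independent of $m$}, and the only thing that changes with $m$ is the deterministic mark sequence $\{c_j^{(m)}\}$. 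Since $\{c_j^{(m)}\}\to\{c_j\}$ in $\lo$, the limits converge almost surely rather than merely in distribution, and no separate ``Poisson continuity'' argument is needed. At the very end one passes from $b_n$ back to $a_n$ by a deterministic rescaling. Your route works, but the paper's normalization trick makes Step~2 essentially a one-liner. (A small related wrinkle in your Step~1: as written, \Cref{thm:PPconvInLo} requires the normalization matched to the process at hand, so applying it to $X^{(q)}$ with $a_n$ rather than $a_n^{(q)}$ needs an explicit rescaling; the $b_n$ device sidesteps this as well.)
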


\begin{proof}
  The proof of~(\ref{eq:MAinfPPC}) is based on a truncation argument which compares $X_t$ with
 \[
 X_t^{(m)} = \sum_{j=-m}^{m} c_j \xi_{t-j} \; , \ \ t\in\ZZ \; .
 \]
 Let $\{b_n\}$ and $\{a_{m,n}\}$ be non decreasing sequences such that $n\PP(|Z_0|>b_n)\to 1$ and
 $n\PP(|X_0^{(m)}|>a_{m,n})\to 1$. The limit~(\ref{eq:tail-equivalence-linear}) implies that
 $a_{m,n}\sim \left(\sum_{j=-m}^m |c_j|^\alpha\right)^{1/\alpha}b_n$.  Let~$\PPC_{m,n}$ be the point
 process of exceedences of the truncated sequence defined by
 \begin{align*}
   \PPC_{m,n} = \sum_{i=1}^{k_n} \delta_{(i/k_n,(X_{(i-1)r_n+1}^{(m)},\dots,X_{ir_n}^{(m)})/b_n)} \; .
 \end{align*}
 The process $\{X_t^{(m)}\}$ is $(2m)$-dependent (hence $\beta$-mixing with $\beta_j=0$ for $j>2m$) and
 therefore satisfies the conditions of \Cref{thm:PPconvInLo}.  Thus $\PPC_{m,n}\dto\PPC_{(m)}$ with
 \begin{align*}
   \PPC_{(m)} = \sum_{i=1}^\infty  \delta_{(T_i,P_i\Theta_i^\xi\{c_j^{(m)}\})} \; , 
 \end{align*}
 with $\sum_{i=1}^\infty \delta_{(T_i,P_i)}$ a Poisson point process on $[0,1]\times(0,\infty)$ with
 mean measure $Leb\times \rmd(-y^{-\alpha})$, $c_j^{(m)} = c_j$ if $|j|\leq m$ and $c_j=0$
 otherwise, and $\Theta_i^\xi$, $i\geq1$ are \iid\ copies of $\Theta^\xi$. Since $\{c_j^{(m)}\}$
 converges to $\{c_j\}$ in $\lo$, it follows that 
 \begin{equation*}
   \PPC_{(m)} \to    \PPC_\infty = \sum_{i=1}^\infty\delta_{(T_i, P_i \Theta_i^\xi \{c_j\})}  \;  , 
 \end{equation*}
almost surely in $\mcm_p([0,1]\times\lo\setminus\{\boldsymbol{0}\})$. 

Define now 
\begin{align*}
  \PPC_{\infty,n} = \sum_{i=1}^{k_n} \delta_{(i/k_n,(X_{(i-1)r_n+1},\dots,X_{ir_n})/b_n)} \; .
\end{align*}
Then, for every bounded Lipschitz continuous function $f$ defined on
$[0,1]\times \lo\setminus\{\boldsymbol{0}\}$ with bounded support,
\begin{align*}
  \lim_{m\to\infty}  \limsup_{n\to\infty} \PP(|\PPC_{n,m}(f)-\PPC_{\infty,n}(f)|>\eta) = 0 \; .
\end{align*}
As in the proof of \cite[Theorem 2.4]{davis:resnick:1985l}, this follows from 
\begin{align}
  \label{eq:lemmaDR1985}
   \lim_{m\to\infty}\limsup_{n\to\infty} \PP(\max_{1\leq i \leq n}|X_i^{(m)}-X_i|>b_n\gamma)=0 \; . 
\end{align}
for all $\gamma>0$ which is implied by (\ref{eq:tail-equivalence-linear}); see
\cite[Lemma~2.3]{davis:resnick:1985l}. 

This proves that
\begin{align}
  \PPC_{\infty,n}\dto\PPC_\infty  \; .    \label{eq:MAinfPPC}
\end{align}
and since $\PPC_n$ and $\PPC_{\infty,n}$ differ
only by a deterministic scaling of the points, this proves our result.
\end{proof}

\section{Convergence of the partial sum process}
\label{sec:consum}
In order to study the convergence of the partial sum process in cases where it fails to hold in the
usual space $D$, we first introduce an enlarged space $E$. In the rest of the paper we restrict to
the case of $\RR$-valued time series.
\subsection{The space of decorated \cadlag\ functions}
To establish convergence of the partial sum process of a dependent sequence $\{X_n\}$ we will
consider the function space $E\equiv E([0,1],\RR)$ introduced in Whitt~\cite[Sections 15.4 and
15.5]{whitt:2002}. For the benefit of the reader, in what follows, we briefly introduce this space
closely following the exposition in the previously mentioned reference.

The elements of $E$ have the form
\[
(x , \setJ , \{I(t):t\in \setJ\})
\] 
where 
\begin{enumerate}[-]
\item $x\in  D([0,1],\RR)$;
\item $\setJ$ is a countable subset of $[0,1]$ with $Disc(x)\subseteq \setJ$, where $Disc(x)$ is the
  set of discontinuities of the c\`adl\`ag function $x$;
\item for each $t\in \setJ$, $I(t)$ is a closed bounded interval (called the decoration) in $\RR$
  such that $x(t),x(t-)\in I(t)$ for all $t\in \setJ$.
\end{enumerate}
Moreover, we assume that for each $\epsilon>0,$ there are at most finitely many times $t$ for which
the length of the interval $I(t)$ is greater than $\epsilon.$ This ensures that the graphs of
elements in $E,$ defined below, are compact subsets of $\RR^2$ which allows one to impose a metric
on $E$ by using the Hausdorff metric on the space of graphs of elements in $E$.

Note that every triple $(x,\setJ,\{I(t):t\in \setJ\})$ can be equivalently represented by a set-valued function
\[
x'(t):=
\begin{cases}
I(t) & \mbox{ if } t \in \setJ \; , \\
\{x(t)\} &  \mbox{ if } t\not\in \setJ \; ,
\end{cases}
\]
or by the graph of $x'$ defined by
\[
\Gamma_{x'}:=\{(t,z)\in [0,1]\times\RR:z\in x'(t)\}.
\]
In the sequel, we will usually denote the elements of $E$ by $x'$. 

Let $\hausdorff$ denote the Hausdorff metric on the space of compact subsets of $\RR^d$ (regardless
of dimension) i.e.~for compact subsets $A,B,$
\begin{equation*}
\hausdorff(A,B)=\sup_{x\in A}\|x-B\|_{\infty} \vee \sup_{y\in B}\|y-A\|_{\infty} \; , 
\end{equation*}
where $\|x-B\|_{\infty}=\inf_{y\in B}\|x-y\|_\infty$.  We then define a metric on $E$, denoted by
$\metricE$, by
\begin{equation}\label{eq:HausE}
\metricE(x',y') = \hausdorff(\Gamma_{x'},\Gamma_{y'}) \; .
\end{equation}
We call the topology induced by $\metricE$ on $E$ the $M_2$ topology. This topology is separable,
but the metric space $(E,\metricE)$ is not complete. Also, we define the {uniform metric} on $E$ by
\begin{equation}\label{e:unmetric1}
  \unifE(x',y')=\sup_{0\leq t \leq 1} \hausdorff(x'(t),y'(t)) \; ,
\end{equation}
Obviously, $\unifE$ is a stronger metric than $\metricE$, i.e. for any $x',y'\in E$,
\begin{equation}
  \label{e:unmetric2}
  \metricE(x',y')\leq \unifE(x',y').
\end{equation}
We will often use the following elementary fact: for $a\leq b$ and $c\leq d$ it holds that
\begin{equation}
  \label{eq:HasusdorffforInterval} 
  \hausdorff([a,b],[c,d])\leq |c-a|\vee |d-b|.
\end{equation}
By a slight abuse of notation, we identify every $x\in D$ with an element in $ E$ represented
by
\[
(x  ,  Disc(x)  , \{[x(t-),x(t)]: t\in Disc(x) \}) \; ,
\]
where for any two real numbers $a,b$ by $[a,b]$ we denote the closed interval
$[\min\{a,b\},\max\{a,b\}]$. Consequently, we identify the space $D$ with the subset $D'$ of $E$
given by
\begin{equation}
  \label{D'}
  D'=\{x'\in E:\setJ=Disc(x)\; \text{and for all} \; t\in \setJ, \; I(t)=[x(t-),x(t)]\} \; .
\end{equation}
For an element $x'\in D'$ we have
\[
\Gamma_{x'}=\Gamma_{x},
\]
where $\Gamma_{x}$ is the completed graph of $x$.  Since the $M_2$ topology on $D$ corresponds to
the Hausdorff metric on the space of the completed graphs $\Gamma_{x}$, the map
$x\to (x , Disc(x) , \{[x(t-),x(t)]: t\in Disc(x) \})$ is a homeomorphism from $D$ endowed with the
$M_2$ topology onto $D'$ endowed with the $M_2$ topology. This yields the following lemma. 
\begin{lemma}
  \label{lem:homeomorphismD} 
  The space $D$ endowed with the $M_2$ topology is homeomorphic to the subset $D'$ in $E$ with the
  $M_2$ topology.
\end{lemma}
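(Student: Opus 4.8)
The plan is to unwind the definitions and check that the map $\iota: x \mapsto (x, Disc(x), \{[x(t-),x(t)] : t \in Disc(x)\})$ is a well-defined bijection onto $D'$ and that it is a homeomorphism when both spaces carry the $M_2$ topology. First I would verify that $\iota(x)$ really is an element of $E$: here $x$ is c\`adl\`ag so $Disc(x)$ is countable, the condition $Disc(x) \subseteq \setJ$ holds with equality, each $I(t) = [x(t-),x(t)]$ is a closed bounded interval containing both $x(t)$ and $x(t-)$, and for each $\epsilon > 0$ only finitely many jumps of a c\`adl\`ag function exceed $\epsilon$ in size, so only finitely many decorations have length $> \epsilon$. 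Thus $\iota$ maps $D$ into $E$, and by construction its image is exactly $D'$ as defined in~\eqref{D'}. Injectivity is clear since $x$ can be recovered from the set-valued function $x'$ (it is the single-valued selection off $\setJ$, or equivalently $x(t) = \lim_{s \downarrow t} x'(s)$), and surjectivity onto $D'$ is immediate from the definition of $D'$.

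Next I would identify the two topologies concretely. For $x' = \iota(x) \in D'$, the graph $\Gamma_{x'} = \{(t,z) : z \in x'(t)\}$ coincides with the completed (or "filled-in") graph $\Gamma_x$ of the c\`adl\`ag function $x$, i.e.\ the usual graph together with the vertical segments $\{t\} \times [x(t-),x(t)]$ at jump times; this is exactly the object on which the Skorokhod $M_2$ metric on $D$ is built (see Whitt~\cite{whitt:2002}). On the $E$ side, $\metricE(x',y') = \hausdorff(\Gamma_{x'}, \Gamma_{y'})$ by~\eqref{eq:HausE}. Hence under the identification $\iota$, the metric $\metricE$ restricted to $D'$ is literally the Hausdorff distance between completed graphs, which is by definition the $M_2$ metric on $D$. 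Therefore $\iota$ is an isometry from $(D, d_{M_2})$ onto $(D', \metricE|_{D'})$, and in particular a homeomorphism for the respective $M_2$ topologies. This is essentially a restatement packaging Whitt's results, so I would cite \cite[Sections 15.4--15.5]{whitt:2002} for the fact that the $M_2$ topology on $D$ is generated by the Hausdorff metric on completed graphs and for the agreement of the subspace topology.

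There is no serious obstacle here; the only point requiring a little care is the claim that $\Gamma_{x'} = \Gamma_x$, i.e.\ that the set-valued function $x'$ attached to $x \in D$ fills in precisely the jump segments and nothing more. This follows because off $Disc(x)$ the function $x'$ is the singleton $\{x(t)\}$ and on $Disc(x)$ it is the closed interval spanned by the left and right limits, which is exactly the vertical completion used to define the completed graph; one should also note that limit points of $\Gamma_x$ that are not of the form $(t, x'(t))$ cannot occur because $x$ has left and right limits everywhere, so $\Gamma_x$ is already closed (hence compact, being bounded). Given this, the homeomorphism assertion of \Cref{lem:homeomorphismD} is immediate, and one may also remark in passing that $D'$ is not closed in $(E, \metricE)$, consistently with the earlier statement that $(E, \metricE)$ is not complete while $(D, d_{M_2})$ is Polish.
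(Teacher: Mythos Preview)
Your proposal is correct and follows exactly the same approach as the paper: the paper does not give a separate proof of this lemma but rather derives it in the text immediately preceding the statement, observing that $\Gamma_{x'}=\Gamma_x$ (the completed graph) and that the $M_2$ topology on $D$ is by definition the Hausdorff metric on completed graphs, so the embedding is an isometry. Your write-up simply fills in the routine verifications (well-definedness of $\iota$, bijectivity, the finitely-many-large-jumps property of \cadlag\ functions) that the paper leaves implicit; the final side remark about $D'$ not being closed and $(D,d_{M_2})$ being Polish is extraneous to the lemma and not in the paper.
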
 

\begin{remark}
  \label{rem:additionInE}
  Because two elements in $E$ can have intervals at the same time point, addition in $E$ is in
  general not well behaved. However, problems disappear if one of the summands is a continuous
  function. In such a case, the sum is naturally defined as follows: consider an element
  $x'=(x,\setJ,\{I(t) : t\in \setJ\})$ in $E$ and a continuous function $b$ on $[0,1],$ we define
  the element $x'+b$ in $E$ by
  $$
  x'+b = (x+b,\setJ,\{I(t)+b(t): t\in \setJ\}) \; .
  $$
\end{remark}

We now state a useful characterization of convergence in $(E,\metricE)$ in terms of the local-maximum
function defined for any $x'\in E$ by
\begin{equation} 
  \label{e:locmax}
  M_{t_1,t_2}(x'):=\sup\{z:z\in x'(t), t_1\leq t \leq t_2\},
\end{equation} 
for $0\leq t_1 < t_2 \leq 1$. 

\begin{theorem}[Theorem 15.5.1 Whitt \cite{whitt:2002}]
  \label{thm:charofconvE}
  For elements $x_n',x'\in E$ the following are equivalent:
  \begin{itemize}
  \item[(i)] $x_n'\rightarrow x'$ in $(E,\metricE),$ i.e. $\metricE(x_n',x')\rightarrow 0.$
  \item[(ii)] For all $t_1<t_2$ in a countable dense subset of $[0,1],$ including $0$ and $1,$
    \[
    M_{t_1,t_2}(x_n')\rightarrow M_{t_1,t_2}(x') \text{ in } \RR
    \]
    and
    \[
    M_{t_1,t_2}(-x_n')\rightarrow M_{t_1,t_2}(-x') \text{ in } \RR.
    \]
  \end{itemize}
\end{theorem}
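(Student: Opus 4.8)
The plan is to reduce everything to one structural fact about the space $E$ and then run two symmetric approximation arguments. First observe that, writing $m_{t_1,t_2}(x'):=\inf\{z:z\in x'(t),\, t_1\le t\le t_2\}$, we have $M_{t_1,t_2}(-x')=-m_{t_1,t_2}(x')$, so condition~(ii) is precisely the statement that both the local \emph{supremum} and local \emph{infimum} of $x_n'$ over each interval $[t_1,t_2]$, with $t_1<t_2$ ranging over a fixed countable dense set $T\ni 0,1$, converge to those of $x'$. The structural fact I would establish first is: \emph{for every $y'\in E$ and $0\le t_1\le t_2\le 1$, the vertical section $\{z:(t,z)\in\Gamma_{y'},\, t_1\le t\le t_2\}$ equals the closed interval $[m_{t_1,t_2}(y'),M_{t_1,t_2}(y')]$.} This set is compact, being a projection of the compact set $\Gamma_{y'}\cap([t_1,t_2]\times\RR)$, and nonempty; the content is that it is an \emph{interval}, and this is where the definition of $E$ enters — a value $c$ strictly between two heights attained or decorated over $[t_1,t_2]$ either equals some $y(s)$, or is ``jumped over'' at a discontinuity $s$ so that $c\in[y(s-),y(s)]\subseteq I(s)\subseteq y'(s)$ by the intermediate value property of \cadlag\ functions across jumps, or lies in a decoration overhang anchored to the already-connected part, and taking closure gives the interval. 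I would also record the elementary observation, used below, that $(s_1,s_2)\mapsto M_{s_1,s_2}(y')$ is nonincreasing in $s_1$ and nondecreasing in $s_2$, hence has at most countably many discontinuities in each argument.

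For the direction (i)$\Rightarrow$(ii) I would put $\delta_n=\metricE(x_n',x')=\hausdorff(\Gamma_{x_n'},\Gamma_{x'})\to 0$. For fixed $t_1<t_2$, $M_{t_1,t_2}(x_n')$ is attained at some $(s,w)\in\Gamma_{x_n'}$ with $s\in[t_1,t_2]$; a point of $\Gamma_{x'}$ within $\delta_n$ of it lies over $[t_1-\delta_n,t_2+\delta_n]\cap[0,1]$ at height $\ge w-\delta_n$, so $M_{t_1,t_2}(x_n')\le M_{(t_1-\delta_n)\vee 0,(t_2+\delta_n)\wedge 1}(x')+\delta_n$. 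Symmetrically, starting from the maximizing point of $\Gamma_{x'}$ over $[(t_1+\delta_n)\wedge t_2,(t_2-\delta_n)\vee t_1]$ and mapping it into $\Gamma_{x_n'}$ gives $M_{t_1,t_2}(x_n')\ge M_{(t_1+\delta_n)\wedge t_2,(t_2-\delta_n)\vee t_1}(x')-\delta_n$. Choosing $T$ to be a countable dense set whose points are continuity points of the four monotone envelope functions associated with $x'$ and $-x'$ — one checks $0$ and $1$ may be included, since there one-sidedness makes the outer perturbation degenerate — the two bounds squeeze $M_{t_1,t_2}(x_n')$ between quantities tending to $M_{t_1,t_2}(x')$. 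The same argument applied to $-x_n'$, whose graph is the reflection of $\Gamma_{x_n'}$ and hence at the same Hausdorff distance from $\Gamma_{-x'}$, yields the convergence of $M_{t_1,t_2}(-x_n')$.

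For (ii)$\Rightarrow$(i) I would fix $\epsilon>0$ and cover $[0,1]$ by finitely many intervals $[t_1^{(k)},t_2^{(k)}]$, $k=1,\dots,K$, with endpoints in $T$ and lengths $<\epsilon$, using that $0,1\in T$. By (ii) the intervals $[m_{t_1^{(k)},t_2^{(k)}}(x_n'),M_{t_1^{(k)},t_2^{(k)}}(x_n')]$ converge, in Hausdorff distance on $\RR$, to $[m_{t_1^{(k)},t_2^{(k)}}(x'),M_{t_1^{(k)},t_2^{(k)}}(x')]$ for each $k$; pick $n_0$ beyond which all $K$ of these distances are $<\epsilon$. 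For $n\ge n_0$ and $(t^*,z)\in\Gamma_{x_n'}$, choose $k$ with $t^*\in[t_1^{(k)},t_2^{(k)}]$; the structural lemma gives $z\in[m_{t_1^{(k)},t_2^{(k)}}(x_n'),M_{t_1^{(k)},t_2^{(k)}}(x_n')]$, hence $z$ is within $\epsilon$ of the corresponding interval for $x'$, and the lemma applied to $x'$ produces $t^{**}\in[t_1^{(k)},t_2^{(k)}]$ and $z'$ with $(t^{**},z')\in\Gamma_{x'}$, $|z-z'|<\epsilon$ and $|t^*-t^{**}|<\epsilon$. Thus $\Gamma_{x_n'}$ lies in the $\epsilon$-neighbourhood of $\Gamma_{x'}$; exchanging the roles of $x_n'$ and $x'$ gives the reverse inclusion, so $\metricE(x_n',x')\to 0$.

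The main obstacle is the structural lemma itself: that the vertical section of a graph in $E$ over a time interval is a genuine interval $[m_{t_1,t_2},M_{t_1,t_2}]$. Everything else is a comparison of Hausdorff neighbourhoods with these envelopes, but that equivalence is exactly what forces one to use the defining properties of $E$ — decorations contain $[x(t-),x(t)]$, only finitely many long decorations, compact graphs — and it requires care at jump times and at the endpoints $0$ and $1$. The remaining delicate point, uniformizing the interval-by-interval estimates through a finite cover of $[0,1]$ by short intervals with endpoints in $T$, is routine once compactness of the graphs is in hand.
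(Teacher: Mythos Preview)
The paper does not prove this statement: Theorem~\ref{thm:charofconvE} is quoted verbatim from Whitt's book (Theorem~15.5.1 there) and is used as a black box, so there is no ``paper's own proof'' to compare against. Your outline is a plausible direct proof along the lines one would expect Whitt's argument to take: reduce the $M_2$ metric on $E$ to Hausdorff distance between graphs, use that the vertical section of a graph over any time window is a compact interval $[m_{t_1,t_2},M_{t_1,t_2}]$, and then run two-sided approximation via a finite cover by short intervals with endpoints in the dense set. The identification of the ``structural lemma'' (connectedness of vertical sections) as the crux is correct, and your sketch of why it holds --- the constraint $x(t-),x(t)\in I(t)$ anchors every decoration to the \cadlag\ path, and any value jumped over by $x$ is caught by the decoration at that discontinuity --- is the right idea; you are also right that this is where the defining axioms of $E$ actually do work, and that the endpoints $0,1$ need a one-sided version of the squeeze. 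None of this is in the present paper, so if you want a line-by-line comparison you would have to consult \cite{whitt:2002} directly.
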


\subsection{Invariance principle in the space $E$}
Consider the partial sum process in $D([0,1])$ defined by
\[
S_n(t) = \sum_{i=1}^{\lfloor nt \rfloor} \frac{X_i}{a_n} \; , \; t\in[0,1],
\]
and define also
\begin{align*}
  V_n(t)=
  \begin{cases}
    S_n(t)  & \mbox{ if } 0< \alpha < 1 \; ,  \\
    S_n(t)-\lfloor nt \rfloor \EE\left(\frac{X_1}{a_n}\1{\{|X_1|/a_n\leq 1\}}\right) & \mbox{ if } 1
    \leq \alpha < 2 \; . \end{cases}
\end{align*}
As usual, when $1 \leq \alpha <2$, an
additional condition is needed to deal with the small jumps. 
\begin{hypothesis}
  \label{hypo:ANSJ} 
  For all $\delta>0$,
  \begin{align}
    \label{eq:ANSJ}
    \lim_{\epsilon\to0} \limsup_{n\to\infty} \PP \biggl( \max_{1 \leq k \leq n} \biggl| \sum_{i=1}^k
    \{X_{i}\1{\{|X_i|\leq a_n \epsilon\}} -\EE[X_i \1{||X_{i}| \leq a_n\epsilon\}}] \} \biggr| >
    a_n\delta \biggr) = 0 \; .
  \end{align}
\end{hypothesis}
It is known from \cite{davis:hsing:1995} that the finite dimensional marginal distributions of $V_n$
converge to those of an $\alpha-$stable L\'evy process. This result is strengthened in
\cite{basrak:krizmanic:segers:2012} to convergence in the $M_1$ topology if $Q_{j}Q_{j'}\geq0$ for
all $j\ne j'\in\ZZ,$ i.e. if all extremes within one cluster have the same sign. In the next
theorem, we remove the latter restriction and establish the convergence of the process $V_n$ in the
space $E$.  

For that purpose,  we assume only regular variation of the sequence $\sequence{X}$ and  the  conclusion of \Cref{thm:PPconvInLo}, i.e.
\begin{align}
\PPC_n\dto \PPC  = \sum_{i=1}^\infty\delta_{(T_i, P_i\bsQ_{i})}  \label{eq:Repr-sum}
\end{align}
in $\mathcal{M}_p([0,1]\times \lo\setminus\{\tilde{\boldsymbol{0}}\})$, where
$\sum_{i=1}^\infty\delta_{(T_i,P_i)}$ is a Poisson point process on $[0,1]\times(0,\infty]$ with
intensity measure $Leb \times d(-\theta y^{-\alpha})$ with $\theta>0$ and
$\bsQ_i=\{Q_{i,j},j\in\ZZ\}$, $i\geq1$ are \iid\ sequences in $\lo$, independent of
$\sum_{i=1}^\infty\delta_{(T_i, P_i)}$ and such that $\PP(\sup_{j\in\ZZ} |Q_{1,j}|=1)=1$.  Denote by
$\bsQ=\{Q_j,j\in\ZZ\}$ a random sequence the with the distribution equal to the distribution of
$\bsQ_1$.  We also describe the limit of $V_n$ in terms of the point process $\PPC$.

The convergence \eqref{eq:Repr-sum} and Fatou's lemma imply that
$\theta \sum_{j\in\ZZ} \EE[|Q_j|^\alpha] \leq 1$, as originally noted by
\cite[Theorem~2.6]{davis:hsing:1995}.  This implies that for $\alpha\in(0,1]$,
\begin{align}
  \label{eq:SumOfTheQjs}
  \EE \left[ \left( \sum_{j\in\ZZ} |Q_j| \right)^\alpha \right] < \infty \; .
\end{align}
For $\alpha>1$, this will have to be assumed. Furthermore, the case $\alpha=1$, as usual,
  requires additional care. We will assume that
\begin{align}
  \label{eq:SumOfTheQjs_alpha_equal_to_1}
  \EE \left[ \sum_{j\in\ZZ} |Q_j| \log\left(|Q_j|^{-1} \sum_{i\in\ZZ} |Q_i|\right) \right] < \infty \; ,
\end{align}
where we use the convention $|Q_j| \log\left(|Q_j|^{-1} \sum_{i\in\ZZ} |Q_i|\right)=0$ if $|Q_j|=0$.
Fortunately, it turns out that conditions (\ref{eq:SumOfTheQjs}) and
(\ref{eq:SumOfTheQjs_alpha_equal_to_1}) are satisfied in most examples. See
\Cref{rem:conditions_in_terms_of_spectral_process} below.

\begin{theorem} 
  \label{thm:PartialSumConvinE}
  Let $\sequence{X}$ be a stationary $\RR$-valued regularly varying sequence with tail index
  $\alpha\in(0,2)$ and assume that the convergence in \eqref{eq:Repr-sum} holds.  If $\alpha\geq1$
  let \Cref{hypo:ANSJ} hold. For $\alpha>1$, assume that (\ref{eq:SumOfTheQjs}) holds, and for
  $\alpha=1$, assume that (\ref{eq:SumOfTheQjs_alpha_equal_to_1}) holds.  Then
  \[
  V_n\dto V'=\left(V , \{T_i \}_{i\in\NN} , \{I(T_i)\}_{i\in \NN} \right) \; ,
  \] 
  with respect to $M_2$ topology on $E([0,1],\RR)$, where 
  \begin{itemize}
  \item[(i)] $V$ is an $\alpha-$stable L\'evy process on $[0,1]$ given by
    \begin{subequations}
      \label{eq:levylimit}
      \begin{align}
        V(\cdot) & = \sum_{T_i\leq \cdot}\sum_{j\in\ZZ} P_iQ_{i,j} \; ,  \ 0 < \alpha < 1 \; ,  \label{eq:levylimit-alpha<1} \\
        V(\cdot) & = \lim_{\epsilon\to 0} \left( \sum_{T_i\leq \cdot}\sum_{j\in\ZZ}
                   P_iQ_{i,j}\1{\{|P_iQ_{i,j}|>\epsilon\}} - (\cdot) \int_{\epsilon<|x|\leq 1} x \mu(\rmd x) \right)
                   \;, \ 1 \leq \alpha < 2 \; ,  \label{eq:levylimit-alphageq1}
      \end{align}
    \end{subequations}
    where the series in (\ref{eq:levylimit-alpha<1}) is almost surely absolutely summable and the
    holds uniformly on $[0,1]$ almost surely {(along some subsequence)} with $\mu$ given in
    \eqref{eq:def-mu}.
  \item[(ii)] For all $i\in\NN,$
    \[
    I(T_i)=V(T_i-)+P_i\left[\inf_{k\in\ZZ}\sum_{j\leq k} Q_{i,j} \; ,\sup_{k\in\ZZ}\sum_{j\leq k}
      Q_{i,j}\right].
    \]
  \end{itemize}
\end{theorem}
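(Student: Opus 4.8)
The strategy is to transfer the point process convergence \eqref{eq:Repr-sum} through a suitably continuous summation functional, after a truncation that isolates the "big-jump" contributions from the "small-jump" noise. First I would define, for each threshold $\epsilon>0$, the truncated partial-sum process $V_n^{(\epsilon)}$ that keeps only those $X_i$ with $|X_i|>a_n\epsilon$ (centered appropriately when $\alpha\ge 1$), together with the complementary process of small increments. The small-increment part is controlled: for $\alpha<1$ it is handled directly by a variance/first-moment estimate, and for $\alpha\ge 1$ it is exactly what \Cref{hypo:ANSJ} is designed to kill, giving $\lim_{\epsilon\to0}\limsup_n \PP(\unifE$-distance from $V_n$ to $V_n^{(\epsilon)}>\delta)=0$. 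So by the usual "convergence together" lemma (e.g.\ Billingsley Theorem 3.2) it suffices to (a) establish $V_n^{(\epsilon)}\dto V^{(\epsilon)}$ in $(E,\metricE)$ for each fixed $\epsilon$, and (b) show $V^{(\epsilon)}\to V'$ as $\epsilon\to0$ in $(E,\metricE)$ almost surely (along a subsequence), with $V'$ as in the statement.

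For step (a) with $\epsilon$ fixed, I would write $V_n^{(\epsilon)}$ as an (essentially finite, since only finitely many clusters contribute mass above level $\epsilon$) functional of $\PPC_n$: each cluster $\bsX_{n,i}$ with $\|\bsX_{n,i}\|_\infty>\epsilon$ contributes, around its scaled time $i/k_n$, a jump of total size $\sum_{j} X_{(i-1)r_n+j}/a_n$ over those $j$ with the truncation active, and — crucially for the $E$-valued limit — a \emph{decoration interval} recording the range $\bigl[\inf_k\sum_{j\le k}, \sup_k\sum_{j\le k}\bigr]$ of the partial sums within the cluster, since after time-rescaling the whole cluster collapses to a single instant. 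This is where \Cref{thm:charofconvE} enters: rather than fighting with the Skorokhod-type construction directly, I would verify convergence of the local maxima $M_{t_1,t_2}(\pm V_n^{(\epsilon)})$ along a dense set of $t_1<t_2$ avoiding the (a.s.\ countable) set $\{T_i\}$ and the atoms of the cumulative small-jump drift, using the continuous mapping theorem applied to the map from $\PPC$ (restricted to points of size $>\epsilon$) to the relevant running-maximum functional, plus the anticlustering-type tightness ensuring no mass escapes. The limit point process being Poisson with the product form in \eqref{eq:Repr-sum} then yields precisely $V^{(\epsilon)}=\bigl(V^{(\epsilon)}, \{T_i : P_i\|\bsQ_i\|_{\epsilon}>0\}, \{I^{(\epsilon)}(T_i)\}\bigr)$ with $V^{(\epsilon)}$ the compound-Poisson sum of the truncated cluster-sums (minus the drift $(\cdot)\int_{\epsilon<|x|\le1}x\,\mu(dx)$ when $\alpha\ge1$) and $I^{(\epsilon)}(T_i)=V^{(\epsilon)}(T_i-)+P_i[\inf_k\sum_{j\le k}Q_{i,j}\1{|P_iQ_{i,j}|>\epsilon},\ \sup_k\dots]$.

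For step (b), I must show that as $\epsilon\downarrow0$ the limits $V^{(\epsilon)}$ converge to $V'$. The series $\sum_{T_i\le\cdot}\sum_j P_iQ_{i,j}$ (resp.\ its compensated version) converges — for $\alpha<1$ absolutely, using \eqref{eq:SumOfTheQjs} together with $\int_0^\infty\!y\,d(-\theta y^{-\alpha})$-type integrability of the $P_i$; for $\alpha\in(1,2)$ by an $L^2$/martingale argument on the compensated sum using the assumed \eqref{eq:SumOfTheQjs}; and for $\alpha=1$ this is exactly the role of \eqref{eq:SumOfTheQjs_alpha_equal_to_1}, the log-moment being what makes the truncated-mean corrections converge. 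Uniform (in $t$) convergence, possibly along a subsequence, then gives $V^{(\epsilon)}\to V$ uniformly, hence in $D$ with $M_2$; and the decoration intervals converge because $[\inf_k\sum_{j\le k}Q_{i,j}\1{|P_iQ_{i,j}|>\epsilon},\sup_k\dots]\to[\inf_k\sum_{j\le k}Q_{i,j},\sup_k\dots]$ for each $i$ (dominated, using summability of $\{Q_{i,j}\}_j$) while only finitely many intervals have length exceeding any fixed $\eta$ — which is what keeps the limit in $E$ and makes $\metricE$-convergence of the graphs hold. Assembling (a) and (b) via the convergence-together lemma gives $V_n\dto V'$ in $(E,\metricE)$, with $V$ an $\alpha$-stable Lévy process (its law identified from the Poisson structure, matching the known finite-dimensional limits of \cite{davis:hsing:1995}) and the decorations as claimed.

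\textbf{Main obstacle.} The delicate point is step (b) in the boundary case $\alpha=1$, where neither the uncompensated sum ($\alpha<1$ regime) nor a clean $L^2$ bound ($\alpha>1$ regime) is available, and one must show that the $\epsilon$-dependent centering constants $(\cdot)\int_{\epsilon<|x|\le1}x\,\mu(dx)$ combine with the truncated cluster sums to produce a genuine limit — this is exactly where \eqref{eq:SumOfTheQjs_alpha_equal_to_1} is consumed, and getting the bookkeeping of the within-cluster partial-sum cancellations right (so that the decoration interval picks up the correct range rather than just the endpoints) is the technically heaviest part. A secondary obstacle is making the continuous-mapping argument in step (a) rigorous despite addition in $E$ being ill-behaved (cf.\ \Cref{rem:additionInE}): one must route everything through the local-maximum characterization \Cref{thm:charofconvE} rather than through direct manipulation of $E$-valued sums.
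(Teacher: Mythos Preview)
Your proposal is correct and follows essentially the same route as the paper: truncate at level $\epsilon$, push the point process convergence \eqref{eq:Repr-sum} through a summation-and-decoration functional $T^\epsilon$ whose continuity is verified via the local-maximum characterization of \Cref{thm:charofconvE}, let $\epsilon\downarrow0$ on the limit side using the summability conditions (absolute for $\alpha<1$, an $L^2$/martingale bound packaged as a separate lemma for $\alpha\in[1,2)$, with \eqref{eq:SumOfTheQjs_alpha_equal_to_1} consumed exactly where you say), and close with Billingsley's convergence-together lemma and \Cref{hypo:ANSJ}. One small point you glossed over: $V_n^{(\epsilon)}$ has jumps at times $i/n$ whereas any functional of $\PPC_n$ jumps at $i/k_n$, so an extra Slutsky step is needed to show these two time scales are asymptotically indistinguishable in $(E,\metricE)$; the paper does this explicitly (comparing the graphs block by block, the Hausdorff distance being at most $2r_n/n$), and you will need it too.
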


Before proving the theorem, we make several remarks. We first note that for $\alpha<1$, convergence
of the point process is the only assumption of theorem. Further, an extension of
\Cref{thm:PartialSumConvinE} to multivariate regularly varying sequences would be possible at the
cost of various technical issues (similar to those in \cite[Section~12.3]{whitt:2002} for the
extension of $M_1$ and $M_2$ topologies to vector valued processes) and one would moreover need to
alter the definition of the space $E$ substantially and introduce a new and  weaker notion of $M_2$
topology.

\begin{remark}
  \label{rem:useful-W}
  If (\ref{eq:SumOfTheQjs}) holds, the sums $W_i=\sum_{j\in \ZZ}|Q_{i,j}|$ are almost surely
  well-defined and $\{W_i, i\geq1\}$ is a sequence of \iid\ random variables with $\EE[W_i^\alpha]
  <\infty$. Furthermore, by independence of $\sum_{i=1}^\infty \delta_{P_i}$ and $\{W_i\},$ it
  follows that $\sum_{i=1}^\infty\delta_{P_i W_i}$ is a Poisson point process on $(0,\infty]$ with
  intensity measure $\theta \EE [W_1^\alpha] \alpha y^{-\alpha -1}dy.$ In particular, for every
  $\delta>0$ there a.s. exist at most finitely many indices $i$ such that $P_iW_i >\delta$. Also,
  this implies that
  \begin{equation}
    \label{eq:negligibilityCond1}
    \sup_{i\in \NN} \sum_{j\in \ZZ} P_i|Q_{i,j}|\1{\{P_i|Q_{i,j}|\leq \epsilon\}} \rightarrow 0
  \end{equation}
  almost surely as $\epsilon\rightarrow 0.$ These facts will be used several times in the proof.
\end{remark}

\begin{remark}\label{rem:charac-V}
The L\'evy process $V$ from \Cref{thm:PartialSumConvinE} is the weak limit in the sense of finite
  dimensional distributions of the partial sum process $V_n$, characterized by
  \begin{align}
    \label{eq:charac-V}
    \log \EE[\rme^{\rmi z V(1)}] = 
    \begin{cases}
      \rmi \location z + \Gamma(1-\alpha)\cos(\pi\alpha/2)
      \sigma^\alpha|z|^\alpha\{1 - \rmi \skewness \mathrm{sgn}(z) \tan(\pi\alpha/2)\}   & \alpha\ne1 \; , \\
      \rmi \location z -
      \frac\pi2 \sigma|z|\{1 - \rmi \frac2\pi\skewness \mathrm{sgn}(z) \log(|z|)\}   & \alpha=1 \; ,
    \end{cases}
  \end{align}
with, denoting $x^{\langle\alpha\rangle} = x|x|^{\alpha-1}=x_+^\alpha - x_{-}^\alpha$,
\begin{align*}
  \sigma^\alpha & = \theta\EE\left[\left|\sum_{j\in \ZZ}Q_j\right|^\alpha\right] \; , \ \
  \skewness  = \frac{\EE[(\sum_{j\in \ZZ}Q_j)^{\langle \alpha \rangle}]} {\EE[|\sum_{j\in \ZZ}Q_j|^\alpha]}
\end{align*}
and 
\begin{enumerate}[(i)]
\item $\location=0$ if $\alpha< 1$;
\item $\location=(\alpha-1)^{-1}\alpha \theta\EE\left[\sum_{j\in \ZZ} Q_j^{\langle \alpha \rangle}\right]$ if $\alpha>1$; 
\item if $\alpha=1$, then
\[
\location = \theta\left(c_0 \EE\left[\sum_{j\in\ZZ} Q_j\right] -  \EE\left[\sum_{j\in\ZZ} Q_j
  \log\left(\left| \sum_{j\in\ZZ} Q_j \right|\right)\right]-\EE\left[\sum_{j\in\ZZ} Q_j
  \log\left(\left|Q_{j} \right|^{-1}\right)\right]\right) \; ,
\]
with $c_0 = \int_{0}^{\infty}(\sin y - y\1{(0,1]}(y))y^{-2} \rmd y$.
\end{enumerate}  
These parameters were computed in \cite[Theorem 3.2]{davis:hsing:1995} but with a complicated
expression for the location parameter $\location$ in the case $\alpha=1$ (see \cite[Remark
3.3]{davis:hsing:1995}). The explicit expression given here, which holds under the
assumption~(\ref{eq:SumOfTheQjs_alpha_equal_to_1}), is new; the proof is given in
\Cref{lem:alpha_stable_parameters}. As often done in the literature, if the sequence is assumed to
be symmetric then assumption~(\ref{eq:SumOfTheQjs_alpha_equal_to_1}) is not needed and the location
parameter is 0.
\end{remark}

\begin{remark}
  \label{rem:conditions_in_terms_of_spectral_process}
  \cite{planinic:soulier:2017} showed that whenever $\PP(\lim_{|t|\to \infty}|Y_t|=0)=1$, the
  quantity $\theta$ from (\ref{eq:def-theta}) is positive and
  $\theta=\PP(\sup_{i\leq -1}|Y_i|\leq 1)$. In particular, the sequence $\bsQ$ from (\ref{eq:q}) is
  well defined in this case and moreover, by \cite[Lemma 3.11]{planinic:soulier:2017}, the condition
  \eqref{eq:SumOfTheQjs} turns out to be equivalent to
  \begin{align}\label{eq:SumOfTheThetajs}
    \EE \left[ \left( \sum_{j=0}^\infty |\Theta_j| \right)^{\alpha-1} \right] < \infty \; 
  \end{align}
  which is automatic if $\alpha\in (0,1]$. Furthermore, if $\alpha=1$,
  \cite[Lemma~3.14]{planinic:soulier:2017} shows that the condition
  \eqref{eq:SumOfTheQjs_alpha_equal_to_1} is then equivalent to
  \begin{align}
    \label{eq:LogSumOfTheThetajs}
    \EE \left[ \log \left( \sum_{j=0}^\infty |\Theta_j| \right) \right] < \infty \; .
  \end{align}  
  These conditions are easier to check than conditions \eqref{eq:SumOfTheQjs} and
  \eqref{eq:SumOfTheQjs_alpha_equal_to_1} since it is easier to determine the distribution of the
  spectral tail process than the distribution of the process $\bsQ$ from (\ref{eq:q}). In fact, it
  suffices to determine only the distribution of the forward spectral tail process
  $\{\Theta_j,j\geq 0\}$ which is often easier than determining the distribution of the whole
  spectral tail process. For example, it follows from the proof of
  \cite[Theorem~3.2]{mikosch:wintenberger:2014} that for functions of Markov chains satisfying a
  suitable drift condition, (\ref{eq:SumOfTheThetajs}) and (\ref{eq:LogSumOfTheThetajs}) hold {for
    all $\alpha>0$.} Also, notice that for the linear process $\{X_t\}$ from \Cref{subsec:linear}
  these conditions are satisfied if $\sum_{j\in \ZZ} |c_j|<\infty$.
  
  Moreover, \cite[Corollary~3.12 and Lemma~3.14]{planinic:soulier:2017} imply that the scale,
  skewness and location parameters from \Cref{rem:charac-V} can also be expressed in terms of the
  forward spectral tail process as follows:
  \begin{align*}
    \sigma^\alpha & = \EE \left[ \left|\sum_{j=0}^\infty \Theta_j\right|^\alpha - \left|\sum_{j=1}^\infty \Theta_j\right|^\alpha  \right] \; , \\
    \skewness  & = \sigma^{-\alpha} \EE \left[ \left(\sum_{j=0}^\infty \Theta_j\right)^{\langle\alpha\rangle} 
                 - \left(\sum_{j=1}^\infty \Theta_j\right)^{\langle\alpha\rangle}  \right] \; ,
  \end{align*}
   $\location=0$ if $\alpha< 1$, $\location = (\alpha-1)^{-1} \alpha \EE[\Theta_0]$ if $\alpha>1$ (see (\ref{eq:DH95identity}))
  and 
  \begin{align*}
    \location &  = c_0 \EE[\Theta_0]  - \EE\left[\sum_{j=0}^\infty\Theta_j \log\left(\left|\sum_{j=0}^\infty\Theta_j\right|\right)
                -\sum_{j=1}^\infty\Theta_j \log\left(\left|\sum_{j=1}^\infty\Theta_j\right|\right)\right] \; ,
  \end{align*}
  if $\alpha=1$.  It can be shown that these expressions coincide for $\alpha\ne1$ with those in the
  literature, see \eg\ \cite[Theorem~4.3]{mikosch:wintenberger:2016}. As already noted, the
  expression of the location parameter for $\alpha=1$ under the
  assumption~(\ref{eq:SumOfTheQjs_alpha_equal_to_1}) (or (\ref{eq:LogSumOfTheThetajs})) is new.
\end{remark}

\begin{example}
  \label{ex:MAinf4}
  Consider again the linear process $\{X_t\}$ from \Cref{subsec:linear}.  For infinite order moving
  average processes, \cite{davis:resnick:1985l} proved convergence of the finite dimensional
  distributions of the partial sum process; \cite{avram:taqqu:1992} proved the functional
  convergence in the $M_1$ topology (see \cite[Section~12.3]{whitt:2002}) when $c_j\geq0$ for all
  $j$; using the $S$ topology (which is weaker than the $M_1$ topology and makes the supremum
  functional not continuous), \cite{balan:jakubowski:louhichi:2016} proved the corresponding result
  under more general conditions on the sequence $\{c_j\}$ in the case $\alpha\leq 1$.

  Our \Cref{thm:PartialSumConvinE} directly applies to the case of a finite order moving average
  process.  To consider the case of an infinite order moving average process, we assume for
  simplicity that $\alpha<1$.  Applying \Cref{thm:PartialSumConvinE} to the point process
  convergence in \eqref{eq:MAinfPPC-linear}, one obtains the convergence of the partial sum process
  $V_n\dto V'=\left(V , \{T_i \}_{i\in\NN} , \{I(T_i)\}_{i\in \NN} \right) $ in $(E,[0,1])$ where
\[
V(\cdot)=\frac{\sum_{j\in\ZZ} c_j}{\max_{j\in\ZZ} |c_j| } \sum_{T_i\leq \cdot}P_i \Theta_i^\xi\,,
\]
and 
 \[
 I(T_i)= V(T_i-)+\frac{P_i\Theta_i^\xi}{\max_{j\in\ZZ} |c_j| }
 \left[\inf_{k\in\ZZ}
 \sum_{j\leq k } c_j \; ,\sup_{k\in\ZZ} \sum_{j\leq k } c_j\right].
 \]
  For an illustration consider the process
  \begin{align*}
  X_t =  \xi_{t} + c \xi_{t-1} \; .
  \end{align*}
   In the case $c \geq 0$, the convergence of partial sum process in $M_1$ topology follows from
  \cite{avram:taqqu:1992}. On the other hand, for negative $c$'s convergence
  fails in any of Skorohod's topology, but partial sums do have a limit in the sense described by
  our theorem as can be also guessed from Figure~\ref{FigMA1}. 
\end{example}

\begin{figure}[h]
  \centering
  \includegraphics[width=0.8\linewidth]{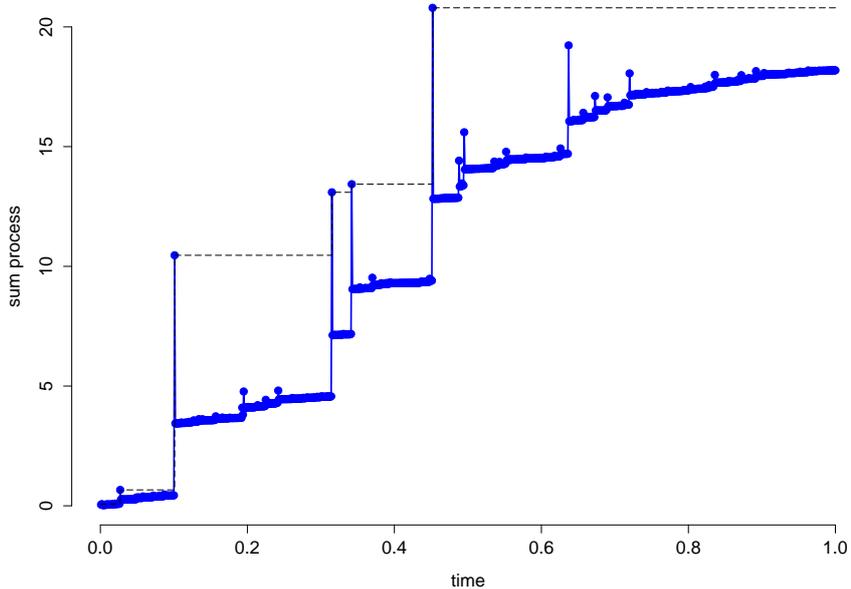}
  \caption{A simulated sample path of the process $S_n$ in the case of linear sequence
    $X_t = \xi_{t} - 0.7 \xi_{t-1}$ with index of regular variation $\alpha=0.7$ in blue.  Observe
    that due to downward ``corrections'' after each large jump, in the limit the paths of the
    process $S_n$ cannot converge to a \cadlag\ function.}
  \label{FigMA1}
\end{figure}

\begin{remark}
  We do not exclude the case $\sum_{j\in\ZZ}Q_{j}=0$ with probability one, as happens for instance
  in \Cref{ex:MAinf4} with $c=-1$. In such a case, the \cadlag\ component $V$ is simply
  the null process.
\end{remark}   
 
\begin{example}
  Consider a stationary GARCH$(1,1)$ process
  \[
  X_t=\sigma_t Z_t,\;\; \sigma_t^2=\alpha_0 + \alpha_1X_{t-1}^2+\beta_1 \sigma_{t-1}^2,\;\; t\in\ZZ,
  \]
  where $\alpha_0,\alpha_1,\beta_1>0,$ and $\sequence{Z}$ is a sequence of i.i.d. random
  variables with mean zero and variance one. Under mild conditions the process $\{X_t\}$ is
  regularly varying and satisfies Assumptions \ref{hypo:AC} and \ref{hypo:Aprimecluster}.  These
  hold for instance in the case of standard normal innovations ${Z}_t$ and sufficiently small
  parameters $\alpha_1, \beta_1$, see \cite[Section~4.4]{mikosch:wintenberger:2013}. Consider for
  simplicity such a stationary GARCH$(1,1)$ process with tail index $\alpha\in (0,1).$ Since all the
  conditions of \Cref{thm:PartialSumConvinE} are met, its partial sum process has a limit in the
  space $E$ (cf. Figure~\ref{FigGARCH}).
\end{example}

\begin{figure}[h]
  \centering
  \includegraphics[width=0.8\linewidth]{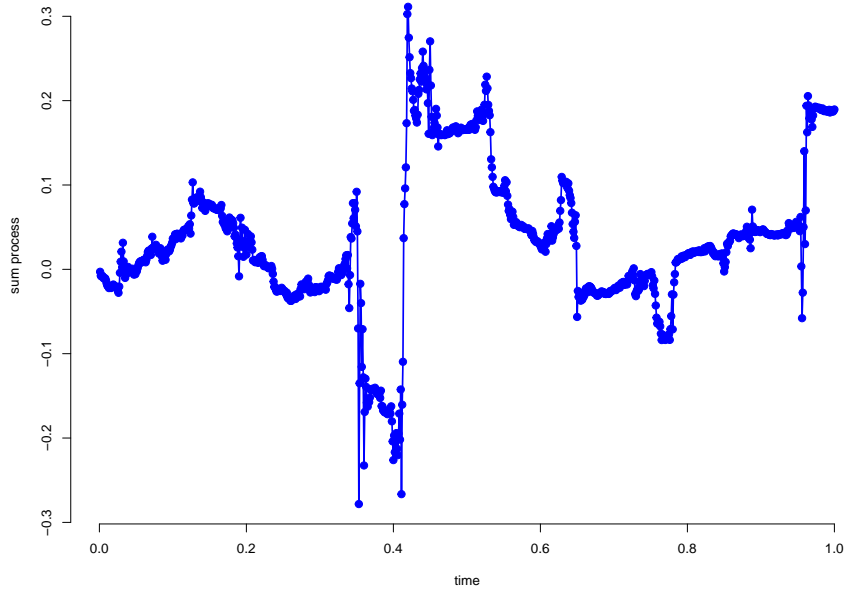}
  \caption{A simulated sample path of the process $S_n$ in the case of GARCH$(1,1)$ process with
    parameters $\alpha_0=0.01,\, \alpha_1=1.45$ and $\beta_1=0.1,$ and tail index $\alpha$ between
    $0.5$ and 1.}
  \label{FigGARCH}
\end{figure}
\begin{proof}[Proof of \Cref{thm:PartialSumConvinE}]
  The proof is split into the cases $\alpha<1$ which is simpler, and the case
  $\alpha\in[1,2)$ where centering and truncation introduce additional technical difficulties. 
  \begin{enumerate}[(a),wide=0pt]
  \item \label{item:01} Assume first that $\alpha\in(0,1).$ We divide the proof into several steps.
    \begin{enumerate}[{Step} 1.,wide=0pt]
    \item \label{item:step1} For every $\epsilon>0$, consider the functions $s^\epsilon,u^\epsilon$
      and $v^\epsilon$ defined on $\lo$ by
      \[
      s^\epsilon (\tbx) = \sum_j x_j \1{\{|x_j|>\epsilon\}} \; , \ \ u^\epsilon (\tbx) = \inf_k
      \sum_{j\leq k} x_j \1{\{|x_j|>\epsilon\}} \; , \ \ v^\epsilon (\tbx) = \sup_k \sum_{j\leq k}
      x_j \1{\{|x_j|>\epsilon\}} \; ,
      \]
      and define the mapping $T^\epsilon:\mathcal{M}_p([0,1]\times\lo\setminus\{\tilde{\boldsymbol{0}}\})
      \rightarrow E$ by setting, for $\pointmeas=\sum_{i=1}^\infty \delta_{t_i,\tbx^i},$
      \begin{align*}
        T^\epsilon \pointmeas= \left(\left(\sum_{t_i\leq t} s^\epsilon (\tbx^i)\right)_{t\in [0,1]} \;,\{t_i
          : \|\tbx^i\|_\infty >\epsilon \} , \{I(t_i):\|\tbx^i\|_\infty >\epsilon\} \right),
      \end{align*}
      where
      \[
I(t_i)=\sum_{t_j<t_i} s^\epsilon (\tbx^j) +\left[
      \sum_{t_k=t_i} u^\epsilon (\tbx^k) , \sum_{t_k=t_i} v^\epsilon (\tbx^k)\right].
\]
Since $m$ belongs $\mathcal{M}_p([0,1]\times\lo\setminus\{\tilde{\boldsymbol{0}}\}),$ there is only a finite number of points
$(t_i,\tbx^i)$ such that $\|\tbx^i\|_\infty >\epsilon$ and furthermore, every $\tbx^i$ has at most
finitely many coordinates greater than $\epsilon.$ Therefore, the mapping $T^\epsilon$ is
well-defined, that is, $T^\epsilon \pointmeas$ is a proper element of $E.$

Next, we define the subsets of $\mathcal{M}_p([0,1]\times\lo\setminus\{\tilde{\boldsymbol{0}}\})$
by
\begin{align*}
  \Lambda_1 & = \left\{\sum_{i=1}^\infty \delta_{t_i,\tbx^i}: |x_j^i|\neq \epsilon, i\geq 1, j\in\ZZ\right\} \; , \\
  \Lambda_2 & =\{\sum_{i=1}^\infty \delta_{t_i,\tbx^i}: 0<t_i<1 \textnormal{ and } t_i\neq t_j
  \textnormal{ for every } i>j\geq 1\}.
\end{align*}
We claim that $T^\epsilon$ is continuous on the set $\Lambda_1 \cap \Lambda_2.$ Assume that
$\pointmeas_n \rightarrow_{w^\#} \pointmeas=\sum_{i=1}^\infty \delta_{t_i,\tbx^i} \in\Lambda_1 \cap
\Lambda_2$.
By an adaptation of \cite[Proposition 3.13]{resnick:1987}, this convergence implies that the
finitely many points of $\pointmeas_n$ in every set $B$ bounded for $\tilde{d}'$ and such that
$\gamma(\partial B)=0$ converge pointwise to the finitely many points of $\pointmeas$ in $B$. In
particular, this holds for $B=\{(t,\tbx)\, : \, \|\tbx\|_\infty >\epsilon \}$ and it follows that
for all $t_1<t_2$ in $[0,1]$ such that
$\pointmeas(\{t_1,t_2\}\times \lo\setminus\{\tilde{\boldsymbol{0}}\})=0$,
\[
M_{t_1,t_2}(T^\epsilon(\pointmeas_n))\rightarrow M_{t_1,t_2}(T^\epsilon(\pointmeas)) \text{   in   } \RR
\]
and
\[
M_{t_1,t_2}(-T^\epsilon(\pointmeas_n))\rightarrow M_{t_1,t_2}(-T^\epsilon(\pointmeas)) \text{   in   } \RR,
\]
whith the local-maximum function $M_{t_1,t_2}$ defined as in (\ref{e:locmax}).  Since the set of all
such times is dense in $[0,1]$ and includes $0$ and $1$, an application of \Cref{thm:charofconvE}
gives that
\begin{align*}
  T^\epsilon(\pointmeas_n) \rightarrow T^\epsilon(\pointmeas)
\end{align*}
in $E$ endowed with the $M_2$ topology.

Recall the point process $\PPC = \sum_{i=1}^\infty \delta_{(T_i, P_i\bsQ_{i})}$ from
\eqref{eq:Repr-sum}. Since the mean measure of $\sum_{i=1}^\infty\delta_{(T_i,P_i)}$ does not have
atoms, it is clear that $N''\in \Lambda_1\cap \Lambda_2$~a.s. Therefore, by the convergence $\PPC_n\dto \PPC$ and the
continuous mapping argument
$$
\tilde{S}_{n,\epsilon}'\dto S_\epsilon' \; ,
$$
where $\tilde{S}_{n,\epsilon}'=T^\epsilon(N_n'')$ and $S_\epsilon'=T^\epsilon(N'').$
\item \label{item:step2} Recall that $W_i=\sum_{j\in \ZZ}|Q_{i,j}|$ and
  $\sum_{i=1}^\infty\delta_{P_i W_i}$ is a Poisson point process on $(0,\infty]$ with intensity
  measure $\theta \EE [W_i^\alpha] \alpha y^{-\alpha -1}dy$ (see \Cref{rem:useful-W}). Since
  $\alpha<1$,  one can sum up the points $\{P_iW_i\},$ i.e. 
\begin{equation}
  \label{e:Sum}
  \sum_{i=1}^\infty P_i W_i  = \sum_{i=1}^\infty \sum_{j\in \ZZ}P_i |Q_{i,j}|<\infty \; \text{ a.s.}
\end{equation}
Therefore, defining $s (\tbx) = \sum_j x_j$, we obtain that the process
\[
V(t)=\sum_{T_i\leq t} s(P_i \bsQ_{i}) \; , \; t\in [0,1] \; ,
\]
is almost surely a well-defined element in $D$ and moreover, it is an $\alpha$-stable L\'evy process.
Further, we define an element $V'$ in $E([0,1],\RR)$ by
\begin{align}
  V'= \left(V , \{T_i \}_{i\in\NN} , \{I(T_i)\}_{i\in \NN} \right) \; ,\label{eq:limitS1} 
\end{align}
where
\begin{align*}
  I(T_i) & = V(T_i-) +\left[ u(P_i \bsQ_{i}) , v(P_i \bsQ_{i})\right] \; , \\
  u(\tbx) & = \inf_k \sum_{j\leq k} x_j \; , \ \ v(\tbx) = \sup_k \sum_{j\leq k} x_j \; .  
\end{align*}
Since for every $\delta>0$ there are at most finitely many points $P_i W_i$ such that $P_i W_i
>\delta$ and $diam(I(T_i))=v(P_i \bsQ_{i})-u(P_i \bsQ_{i})\leq P_i W_i,$ $V'$ is indeed a proper
element of $E$ a.s.

We now show that, as $\epsilon\rightarrow 0,$ the limits $S_\epsilon'$ from the previous step
converge to $V'$ in $(E,m)$ almost surely.  Recall the uniform metric $m^*$ on $E$ defined in
(\ref{e:unmetric1}). By (\ref{e:Sum}) and dominated convergence theorem
\begin{equation}
  \label{eq:mbound1}
  m^*(S_\epsilon',V')=\sup_{0\leq t \leq 1} m(S_\epsilon'(t),V'(t))\leq \sum_{i=1}^\infty \sum_{j\in \ZZ}P_i |Q_{ij}|\1{\{P_i |Q_{ij}| \leq
    \epsilon\}}\rightarrow 0 \; ,  
\end{equation}
almost surely as $\epsilon\rightarrow 0$. Indeed, let $S_\epsilon$ be the \cadlag\ part of
$S_\epsilon',$ i.e.
\[
S_\epsilon(t)=\sum_{T_i\leq t}s^\epsilon(P_i \bsQ_i)=\sum_{T_i\leq t}\sum_{j\in\ZZ}P_i Q_{i,j}\1{\{|P_iQ_{i,j}|>\epsilon\}} \;,\; t\in [0,1]\; .
\] 
If $t\notin\{T_i\}$ then 
\[
m(S_\epsilon'(t),V'(t))=|S_\epsilon(t)-V(t)|\leq \sum_{T_i\leq t}\sum_{j\in\ZZ}P_i|Q_{i,j}|\1{\{P_i |Q_{ij}| \leq
    \epsilon\}}.
\] 
Further, when $t=T_k$ for some $k\in\ZZ,$ by using (\ref{eq:HasusdorffforInterval}) we obtain
\begin{equation}\label{eq:mbound2}
\begin{aligned}
m(S_\epsilon'(t),V'(t))&\leq |(S_\epsilon(T_k-)+v^\epsilon(P_k\bsQ_k))-(V(T_k-)+v(P_k\bsQ_k)) | \\
& \vee |(S_\epsilon(T_k-)+u^\epsilon(P_k\bsQ_k))-(V(T_k-)+u(P_k\bsQ_k)) |.  
\end{aligned}
\end{equation}
The first term on the right-hand side of the equation above  is bounded by
\begin{align*}
&\left|\sum_{T_i< T_k}\sum_{j\in\ZZ}P_i Q_{i,j}\1{\{P_i |Q_{ij}| \leq
    \epsilon\}}\right|+\left|\sup_{l\in\ZZ} \sum_{j\leq l}P_k Q_{k,j}\1{\{P_i |Q_{i,j}| >
    \epsilon\}}-  \sup_{l\in\ZZ} \sum_{j\leq l}P_k Q_{k,j}\right| \\
    &\leq \left|\sum_{T_i< T_k}\sum_{j\in\ZZ}P_i Q_{i,j}\1{\{P_i |Q_{ij}| \leq
    \epsilon\}}\right|+\sup_{l\in\ZZ}\left| \sum_{j\leq l}P_k Q_{k,j}\1{\{P_i |Q_{k,j}| >
    \epsilon\}}-  \sum_{j\leq l}P_k Q_{k,j}\right| \\
    &\leq \sum_{T_i\leq T_k}\sum_{j\in\ZZ}P_i|Q_{i,j}|\1{\{P_i |Q_{i,j}|\leq \epsilon\}}.
\end{align*}
Since, by similar arguments, one can obtain the same bound for the second term on the right-hand
side of (\ref{eq:mbound2}), (\ref{eq:mbound1}) holds.
It now follows from (\ref{e:unmetric2}) that  
\[
S_\epsilon' \rightarrow V' 
\] 
almost surely in $(E,m).$
\item \label{item:step3}Recall that
\[
\bsX_{n,i} =(X_{(i-1)r_n+1},\dots,X_{ir_n})/a_n
\] 
for $i=1,\dots,k_n$ and let $\tilde{S}_n'$ be an element in $E$ defined by 
  \begin{align*}
    \left(\left(\sum_{i/k_n \leq t} s(\bsX_{n,i})\right)_{t\in [0,1]} \;, \{i/k_n \}_{i=1}^{k_n} , \{I(i/k_n)\}_{i=1}^{k_n} \right)\; ,
   \end{align*} 
where 
\[
I(i/k_n)=\sum_{j<i} s(\bsX_{n,j}) +\left[
      u(\bsX_{n,i}) , v(\bsX_{n,i})\right] \; .
\]
By \cite[Theorem~4.2]{billingsley:1968} and the previous two steps, to show that 
\[
\tilde{S}_n'\dto V'
\]
in $(E,\metricE)$, it suffices to prove that, for all $\delta>0,$
\begin{align}
    \lim_{\epsilon\rightarrow 0}\limsup_{n\rightarrow
      \infty} \PP(\metricE(\tilde{S}_{n,\epsilon}',\tilde{S}_n')>\delta)=0 \; . \label{eq:tightnessE}
  \end{align}
Note first that, by the same arguments as in the previous step, we have   
\begin{align*}
  \unifE(\tilde{S}_{n,\epsilon}',\tilde{S}_n')\leq \sum_{j=1}^{k_nr_n}\frac{|X_j|}{a_n}\1{\{|X_j|\leq
    a_n\epsilon\}} \; .
\end{align*}
By (\ref{e:unmetric2}), Markov's inequality and Karamata's theorem
(\cite[Proposition~1.5.8]{bingham:goldie:teugels:1989})
\begin{align*}
  \limsup_{n\to\infty} \PP(\metricE(\tilde{S}_{n,\epsilon}',\tilde{S}_n')>\delta) & \leq
  \limsup_{n\to\infty}  \frac{k_nr_n}{\delta a_n} \EE\left[|X_1|\1{\{|X_1|\leq a_n\epsilon\}}\right]  \\
  & = \lim_{n\to\infty}  \frac{n}{\delta a_n} \cdot \frac{\alpha a_n\epsilon\PP(|X_1|>a_n\epsilon)}{1-\alpha}
   =  \frac{\alpha}{(1-\alpha)\delta}\epsilon^{1-\alpha} \; .
\end{align*}
This proves~(\ref{eq:tightnessE}) since $\alpha<1$ and hence 
\[
\tilde{S}_n'\dto V'
\]
in $(E,\metricE)$.
\item \label{item:step4} Finally, to show that the original partial sum process $S_n$ (and therefore
  $V_n$ since $\alpha\in(0,1)$) also converges in distribution to $V'$ in $(E,\metricE)$, by Slutsky
  argument it suffices to prove that
\begin{equation}
  \label{eq:AsympEqE1}
  \metricE(S_n,\tilde{S}_n')\Pto 0 \; .
\end{equation}
Recall that $k_n=\lfloor n / r_{n} \rfloor$ so $\frac{i r_n}{n}\leq \frac{i}{k_n}$ for all
$i=0,1,\dots,k_n$ and moreover
\begin{equation}
  \label{eq:rn/n}
  \frac{i}{k_n}-\frac{i r_n}{n} = 
  \frac{i}{k_n}\left(1-\frac{k_n r_n}{n}\right) \leq 1-\frac{\lfloor n / r_{n} \rfloor }{n/r_n} 
  = 1-\left(1-\frac{\{ n / r_{n}\}}{n/r_n}\right)\leq \frac{r_n}{n} \; .
\end{equation}
Let $d_{n,i}$ for $i=0,\dots,k_n-1$ be the Hausdorff distance between restrictions of graphs
$\Gamma_{S_n}$ and $\Gamma_{\tilde{S}_n'}$ on time intervals $(\frac{i r_n}{n},\frac{(i+1)r_n}{n}]$
and $(\frac{i}{k_n},\frac{i+1}{k_n}],$ respectively (see Figure \ref{fig:M2equiv}).
 
\begin{figure}[h]
  \centering
  \includegraphics[width=.45\linewidth]{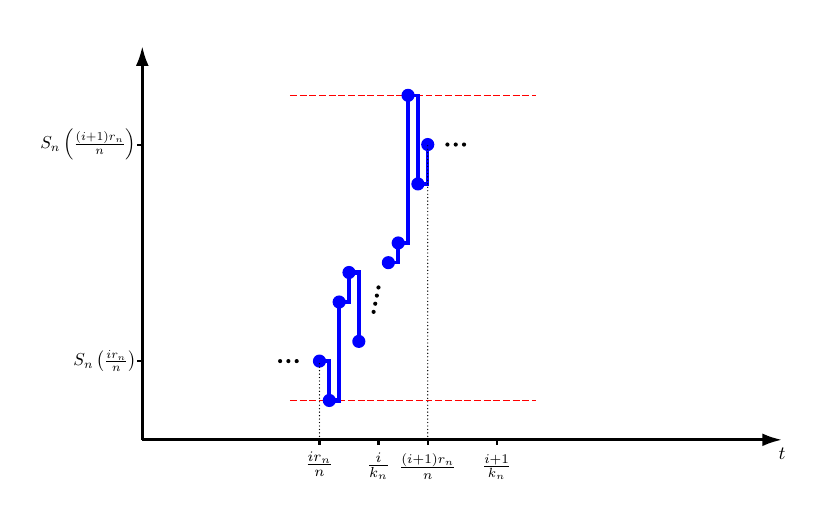}
 \includegraphics[width=.45\linewidth]{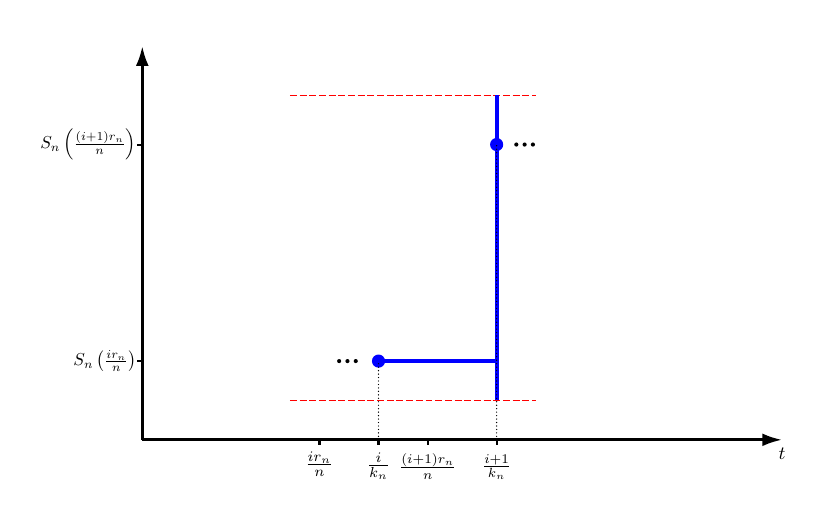}
  \caption{Restrictions of graphs $\Gamma_{S_n}$ and $\Gamma_{\tilde{S}_n'}$ on time intervals
    $(\frac{i r_n}{n},\frac{(i+1)r_n}{n}]$ and $(\frac{i}{k_n},\frac{i+1}{k_n}],$ respectively.}
  \label{fig:M2equiv}
\end{figure}

First note that, by (\ref{eq:rn/n}), the time distance between any two points on these graphs is at
most $2r_n/n.$ Further, by construction, $S_n$ and $\tilde{S}_n'$ have the same range of values on
these time intervals. More precisely,
\[
\bigcup_{t\in(\frac{i r_n}{n},\frac{(i+1)r_n}{n}]}\{z\in\RR\;:\;
(t,z)\in\Gamma_{S_n}\}=\bigcup_{t\in(\frac{i}{k_n},\frac{i+1}{k_n}]}\{z\in\RR\;:\;
(t,z)\in\Gamma_{\tilde{S}_n'}\}=\tilde{S}_n'((i+1)/k_n) \; .
\]
Therefore, the distance between the graphs comes only from the time component, i.e. 
\[
d_{n,i}\leq \frac{2 r_n}{n}\rightarrow 0\; , \; \text{ as } n\rightarrow \infty\; ,
\]
for all $i=0,1,\dots,k_n-1.$ 

Moreover, if we let $d_{n,k_n}$ be the Hausdorff distance between the restriction of the graph
$\Gamma_{S_n}$ on $(\frac{k_n r_n}{n},1]$ and the interval $(1,\tilde{S}_n'(1)),$ it holds that
\[
d_{n,k_n} \leq \frac{r_n}{n} \vee  \sum_{j=k_n r_n+1}^n\frac{|X_j|}{a_n}\Pto 0,
\]
as $n\rightarrow \infty.$
Hence, (\ref{eq:AsympEqE1}) holds since
\[
\metricE(S_n,\tilde{S}_n')\leq \bigvee_{i=0}^{k_n}d_{n,i},
\]
and this finishes the proof in the case $\alpha\in(0,1)$.

\end{enumerate}

\item Assume now that $\alpha\in [1,2)$.  As shown in~\ref{item:step1} in the proof of
  \ref{item:01}, it holds that
  \begin{align}
    \tilde{S}_{n,\epsilon}'\dto S_\epsilon' \label{eq:conv-epsilon}
  \end{align}
  in $E,$ where $\tilde{S}_{n,\epsilon}'=T^\epsilon(N_n'')$ and $S_\epsilon'=T^\epsilon(N'')$.

  For every $\epsilon>0$ define a \cadlag\ process $S_{n,\epsilon}$ by setting, for $t\in [0,1],$
  \[
  S_{n,\epsilon}(t)=\sum_{i=1}^{\lfloor nt \rfloor} \frac{X_i}{a_n}\1{\{|X_i|/a_n > \epsilon\}}.
  \]
  Using the same arguments as in \ref{item:step4} in the proof of \ref{item:01}, it holds that, as
  $n\rightarrow\infty,$
  \begin{align}
    \metricE(S_{n,\epsilon},\tilde{S}_{n,\epsilon}')\Pto 0 \; .  \label{eq:slutskyargument}
  \end{align}
  Therefore, by Slutsky argument it follows from (\ref{eq:slutskyargument}) and~(\ref{eq:conv-epsilon}) that
  \begin{equation}
    \label{eq:epsilonConv}
    S_{n,\epsilon}\dto S_\epsilon'
  \end{equation}
  in $(E,\metricE)$.

  Since $\alpha\in [1,2)$ we need to introduce centering, so we define the c\`adl\`ag process
  $V_{n,\epsilon}$ by setting, for $t\in[0,1],$
  \[
  V_{n,\epsilon}(t)=S_{n,\epsilon}(t)-\lfloor nt \rfloor
  \EE\left(\frac{X_1}{a_n}\1{\{\epsilon<|X_1|/a_n\leq 1\}}\right)\;.
  \]
  From (\ref{eq:conv-mu}) we have, for any $t \in [0,1],$ as $n\rightarrow \infty,$
\begin{equation}
  \label{eq:ConvOfCent}
  \lfloor nt \rfloor \EE\left(\frac{X_1}{a_n}\1{\{\epsilon< |X_1|/a_n \leq 1\}}\right)\rightarrow t \int_{\{x\; : \;\epsilon < |x| \leq 1\}} x \mu(dx).
\end{equation}
Since the limit function above is continuous and the convergence is uniform on $[0,1]$, by Lemma
\ref{lem:ContinuityOfAdditionInE} and (\ref{eq:epsilonConv}) it follows that
\begin{equation}
  \label{eq:convergenceVneps}
  V_{n,\epsilon}\dto V_\epsilon '
\end{equation}
in $E$, where $V_\epsilon'$ is given by (see \Cref{rem:additionInE})
\begin{align*}
V_\epsilon'(t) & = S_\epsilon'(t) -t \int_{\{x\; : \;\epsilon < |x| \leq 1\}} x \mu(dx)  \; .
\end{align*}
Let $V_\epsilon$ be the c\`adl\`ag part of $V_\epsilon',$ i.e., 
\begin{align}
  V_\epsilon(t) & = \sum_{T_i\leq t} s^\epsilon(P_i \bsQ_i) - t \int_{\{x\; : \;\epsilon < |x| \leq 1\}} x \mu(dx) \; .
  \label{eq:Vepsilon}
\end{align}
By \Cref{lem:newlem}, there exist an $\alpha$-stable L\'evy process $V$ such that $V_\epsilon$
converges uniformly almost surely (along some subsequence) to $V$.  Next, as in \ref{item:step2} in
the proof of \ref{item:01} we can define a element $V'$ in $E$ by
\begin{equation}
  \label{eq:limitV1}
  V'=\left(V,\{T_i\}_{i\in \NN}, \{I(T_i)\}_{i\in \NN}\right) \; , 
\end{equation}
where
\begin{equation}
  \label{eq:limitV2}
  I(T_i)=V(T_i-) +\left[    u(P_i \bsQ_{i}) , v(P_i \bsQ_{i})\right] \; ,
\end{equation}
Also, one can argue similarly to the proof of (\ref{eq:mbound1}) to conclude that
\[
\unifE(V',V_\epsilon')\leq ||V-V_\epsilon||_\infty + \sup_{i\in \NN} \sum_{j\in \ZZ}
P_i|Q_{i,j}|\1{\{P_i|Q_{i,j}|\leq \epsilon\}} \; .
\]
Now it follows from (\ref{e:unmetric2}), (\ref{eq:negligibilityCond1}) and a.s. uniform convergence
of $V_\epsilon$ to $V$ that
\begin{equation}
  \label{eq:convergenceVeps}
  V_\epsilon'\rightarrow V' \; \text{a.s.}
\end{equation}
in $(E,\metricE)$. 

Finally, by (\ref{eq:convergenceVneps}), (\ref{eq:convergenceVeps}) and
\cite[Theorem~4.2]{billingsley:1968}, to show that
\begin{equation}
  \label{eq:convergenceVn}
  V_n\dto V',
\end{equation}
in $(E,\metricE)$, it suffices to prove that, for all $\delta>0,$
\begin{align}
  \lim_{\epsilon\rightarrow 0}\limsup_{n\rightarrow \infty} \PP(\metricE(V_{n,\epsilon},V_n)>\delta)=0 \;
  . \label{eq:tightnessE2}
\end{align}
But this follows from \Cref{hypo:ANSJ} and (\ref{e:unmetric2}) since
\[
\unifE(V_{n,\epsilon},V_n) \leq \max_{1\leq k \leq n} \left|\sum_{i=1}^k\left(\frac{X_i}{a_n}
    \1{\{|X_i|/a_n\leq \epsilon\}} - \EE\left[\frac{X_i}{a_n}\1{\{|X_i|/a_n\leq \epsilon\}}\right]
  \right)\right|.
\]
Hence (\ref{eq:convergenceVn}) holds and this finishes the proof.
\end{enumerate}

\end{proof}

\subsection{Supremum of the partial sum process}
\label{sec:supremum-partial-sum}

We next show that the supremum of the partial sum process converges in distribution in $D$ endowed
with the $M_1$ topology, where the limit is the ``running supremum'' of the limit process $V'$ from
Theorem \ref{thm:PartialSumConvinE}.

Let $V$ be the L\'evy process defined in (\ref{eq:levylimit}) and define the process $V^+$ on $[0,1]$ by
\begin{align*}
  V^+(t)= &\begin{cases} V(t)\;, & t\notin\{T_j\}_{j\in\NN}\\
                        V(t-)+\sup_{k\in\ZZ}\sum_{j\leq k}P_i Q_{i,j} \; ,& t=T_i \text{ for some } i\in\NN.\ \
                        \end{cases}
\end{align*}
Define $V^-$ analogously using infimum instead of supremum.  Note that $V^+$ and $V^-$ need not be
right-continuous at the jump times $T_j.$ However, their partial supremum or infimum are \cadlag\
functions.

\begin{theorem}
  \label{thm:SupPartSumConv}
  Under the same conditions as in \Cref{thm:PartialSumConvinE}, it holds that
  \[
  \left(\sup_{s\leq t} V_n (s)\right)_{t\in [0,1]} \dto \left(\sup_{s\leq t} V^+ (s)\right)_{t\in
    [0,1]} \; , 
  \]
  and
  \[
  \left(\inf_{s\leq t} V_n (s)\right)_{t\in [0,1]} \dto \left(\inf_{s\leq t} V^- (s)\right)_{t\in
    [0,1]} \; , 
  \]
jointly   in $D([0,1],\RR)$ endowed with the $M_1$ topology.
\end{theorem}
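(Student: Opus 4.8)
The plan is to derive \Cref{thm:SupPartSumConv} from the convergence $V_n\dto V'$ in $(E,\metricE)$ supplied by \Cref{thm:PartialSumConvinE}, by writing the running supremum and the running infimum as the two coordinates of a \emph{single} map $g\colon E\to D([0,1],\RR)^2$ and checking that $g$ is continuous at $\PP$-almost every realization of $V'$; the conclusion then follows from the continuous mapping theorem (\cite[Theorem~5.1]{billingsley:1968}), which in particular yields the joint convergence at no extra cost. Concretely, for $x'\in E$ put $g(x')=(g_+(x'),g_-(x'))$ with $g_+(x')(t)=\sup\{z:z\in x'(s),\ 0\le s\le t\}$ and $g_-(x')(t)=\inf\{z:z\in x'(s),\ 0\le s\le t\}$, $t\in[0,1]$; note that $g_+(x')(t)=M_{0,t}(x')$ and $g_-(x')(t)=-M_{0,t}(-x')$ for $t>0$, with $M$ the local-maximum functional of \eqref{e:locmax}. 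Then $g_+(x')$ is nondecreasing, $g_-(x')$ is nonincreasing, and a short argument using compactness of the graphs shows that both are \cadlag, so $g$ does map $E$ into $D([0,1],\RR)^2$.

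The first, routine step is to identify $g(V_n)$ and $g(V')$. Identifying the \cadlag\ path $V_n$ with its image in $E$, whose decoration at a jump time $t$ is the interval with endpoints $V_n(t-)$ and $V_n(t)$, the elementary bound $V_n(s-)=\lim_{u\uparrow s}V_n(u)\le\sup_{u\le t}V_n(u)$ for $s\le t$ gives $g_+(V_n)(t)=\sup_{s\le t}V_n(s)$ and, symmetrically, $g_-(V_n)(t)=\inf_{s\le t}V_n(s)$. On the other hand, feeding the description of $V'$ from part~(ii) of \Cref{thm:PartialSumConvinE} into $g$, and using that the \cadlag\ part $V$ has discontinuities only among the $T_i$ together with $\sup I(T_i)=V(T_i-)+\sup_{k\in\ZZ}\sum_{j\le k}P_iQ_{i,j}$ and $\inf I(T_i)=V(T_i-)+\inf_{k\in\ZZ}\sum_{j\le k}P_iQ_{i,j}$, one obtains $g_+(V')=\bigl(\sup_{s\le t}V^+(s)\bigr)_{t\in[0,1]}$ and $g_-(V')=\bigl(\inf_{s\le t}V^-(s)\bigr)_{t\in[0,1]}$, so that $g(V')$ is exactly the pair appearing in the statement.

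The core of the proof is the continuity of $g$. Let $A\subseteq E$ be the set of $x'$ with $0\notin J$ and $1\notin J$; since $\sum_i\delta_{(T_i,P_i)}$ has a nonatomic intensity we have $T_i\in(0,1)$ for all $i$ almost surely, hence $\PP(V'\in A)=1$. I would prove that $g$ is continuous on $A$, from $(E,\metricE)$ to $D([0,1],\RR)^2$ with the product $M_1$ topology. So let $\metricE(x_n',x')\to 0$ with $x'\in A$. By \Cref{thm:charofconvE} there is a countable dense subset $\mathcal T\subseteq[0,1]$ containing $0$ and $1$ such that $M_{t_1,t_2}(x_n')\to M_{t_1,t_2}(x')$ and $M_{t_1,t_2}(-x_n')\to M_{t_1,t_2}(-x')$ for all $t_1<t_2$ in $\mathcal T$; taking $t_1=0$ gives $g_\pm(x_n')(t)\to g_\pm(x')(t)$ for every $t\in\mathcal T\setminus\{0\}$, while $0\notin J$ (which makes $x'(0)$ a singleton and confines $\Gamma_{x'}$ near $t=0$ to a small neighbourhood of $(0,x'(0))$) forces $g_\pm(x_n')(0)\to g_\pm(x')(0)$ as well. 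Since $g_+(x_n')$ and $-g_-(x_n')$ are nondecreasing \cadlag\ functions and, for such functions, convergence in the $M_1$ topology is equivalent to pointwise convergence on a dense subset of $[0,1]$ including $0$ and $1$ (a classical property of the $M_1$ topology restricted to monotone paths; see \cite[Chapter~12]{whitt:2002}), it follows that $g(x_n')\to g(x')$, as needed.

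Putting the pieces together, the continuous mapping theorem applied to $g$ and to $V_n\dto V'$ in $(E,\metricE)$ gives $g(V_n)\dto g(V')$ in $D([0,1],\RR)^2$ with the $M_1$ topology, which by the identifications above is precisely the asserted joint convergence of $\bigl(\sup_{s\le t}V_n(s)\bigr)_t$ and $\bigl(\inf_{s\le t}V_n(s)\bigr)_t$. I expect the main obstacle to be the continuity claim: one must see that $\metricE$-convergence on $E$ --- which is only an $M_2$-type, Hausdorff-on-graphs mode --- still forces $M_1$-convergence of the running extrema. Monotonicity of $g_\pm(x')$ is exactly what makes this go through, since it upgrades the pointwise-on-a-dense-set convergence coming from \Cref{thm:charofconvE} to genuine $M_1$-convergence; the only additional care is at the endpoints $t=0,1$, which the almost-sure genericity $0,1\notin\{T_i\}$ handles.
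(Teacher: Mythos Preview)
Your proposal is correct and follows essentially the same route as the paper's proof: define the running-extremum map on $E$, identify its values at $V_n$ and $V'$, use \Cref{thm:charofconvE} to obtain pointwise convergence of $g_\pm(x_n')$ on a dense set including the endpoints, and then upgrade to $M_1$ convergence via the monotone-path criterion (the paper cites \cite[Corollary~12.5.1]{whitt:2002}). The only cosmetic differences are that you bundle $g_+$ and $g_-$ into a single map from the outset (the paper treats the supremum and remarks that the infimum and joint statement follow by the same continuous mapping), and that your continuity set $A=\{x':0,1\notin J\}$ is slightly different from the paper's $\Lambda=\{x':x'(0)=\{0\}\}$; the paper simply uses $V_n(0)=V'(0)=\{0\}$ to make the $t=0$ endpoint trivial, whereas you argue it from $0\notin J$.
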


\begin{proof}
  We prove the result only for the supremum of the partial sum process since the infimum case is
  completely analogous and joint convergence holds since we are applying the continuous mapping
  argument to the same process.

Define the mapping $\;\sup:E([0,1],\RR)\rightarrow D([0,1],\RR)$ by
  \begin{align*}
    \sup(x')(t)=\sup\{z:z\in x'(s), 0\leq s \leq t\} \; .
  \end{align*}
  Note that $\sup(x')$ is non-decreasing and since for every $\delta>0$ there
  are at most finitely many times $t$ for which the $diam(x'(t))$ is greater than $\delta$, by
  \cite[Theorem 15.4.1.]{whitt:2002} it follows easily that this mapping is well-defined, i.e. that
  $\sup(x')$ is indeed an element in $D.$
Also, by construction,
\[
\sup(V')=\left(\sup_{s\leq t} V^+ (s)\right)_{t\in [0,1]}  
\]
and 
\[
\sup(V_n)=\left(\sup_{s\leq t} V_n (s)\right)_{t\in [0,1]}.
\]
Define the subset of $E$ by
\[
\Lambda=\left\{x'\in E \: : \: x'(0)=\{0\}\right\}
\]
and assume that $x_n'\rightarrow x'$ in $(E,\metricE),$ where $x_n',x'\in \Lambda$. By Theorem
\ref{thm:charofconvE} it follows that
\[
\sup(x_n')(t)=M_{0,t}(x_n')\rightarrow M_{0,t}(x')=\sup(x')(t)
\]
for all $t$ in a dense subset of (0,1], including $1$. Also, the convergence trivially holds for
$t=0$ since $\sup(x_n')(0)=\sup(x')(0)=0$ for all $n\in \NN.$ Since $\sup(x')$ is non-decreasing for
all $x'\in E,$ we can apply \cite[Corollary~12.5.1]{whitt:2002} and conclude that
\begin{align*}
  \sup(x_n') \rightarrow \sup(x')
\end{align*}
in $D$ endowed with $M_1$ topology. 
  Since $V_n,V'\in \Lambda$
almost surely, by Theorem \ref{thm:PartialSumConvinE} and continuous mapping argument it follows
that
\[
\left(\sup_{s\leq t} V_n (s)\right)_{t\in [0,1]} \dto \left(\sup_{s\leq t} V^+ (s)\right)_{t\in [0,1]} 
\] 
in $D$ endowed with $M_1$ topology. 
\end{proof}

\begin{remark}
  Note that when $\sum_{j\in\ZZ}Q_{j}=0$ a.s., the limit for the supremum of the partial sum process in Theorem~\ref{thm:SupPartSumConv} is simply a so
  called Fr\'echet extremal process.  For an illustration of the general limiting behavior of
  running maxima in the case of a linear processes, consider again the moving average of order 1
  from Example~\ref{ex:MAinf4}. Figure~\ref{FigMA1} shows a path (dashed line) of the running maxima
  of the MA$(1)$ process $X_t = \xi_{t} -0.7 \xi_{t-1}$.
\end{remark}

\subsection{$M_2$ convergence of the partial sum process}
We can now characterize the convergence of the partial sum process in the $M_2$ topology in
$D([0,1])$ by an appropriate condition on the tail process of the sequence $\sequence{X}$.
\begin{hypothesis}
  \label{hypo:PartialSumConvM2}
  The sequence $\sequence{Q}[\ZZ][j]$ satisfies
    \begin{align}
      -\left( \sum_{j\in\ZZ} Q_{j} \right)_- = \inf_{k\in\ZZ} \sum_{j\leq k} Q_{j} \leq
      \sup_{k\in\ZZ} \sum_{j\leq k} Q_{j} = \left(\sum_{j\in\ZZ} Q_{j} \right)_+ \; \mbox{
        a.s. }
        \label{eq:convM2}
  \end{align}
 i.e. $-s(\bsQ)_- = u(\bsQ) \leq v(\bsQ) = s(\bsQ)_+$ a.s.
\end{hypothesis}
Note that this assumption ensures that $\sum_{j\in\ZZ} Q_j\ne0$ and that the limit process $V'$ from
\Cref{thm:PartialSumConvinE} has sample paths in the subset $D'$ of $E$ which was defined in
(\ref{D'}). By Lemma \ref{lem:homeomorphismD}, Theorem \ref{thm:PartialSumConvinE} and the
continuous mapping theorem, the next result follows immediately.

\begin{theorem}
  \label{theo:M2-partial-sum}
  If, in addition to conditions in \Cref{thm:PartialSumConvinE}, \Cref{hypo:PartialSumConvM2} holds,
  then
  \[
  V_n\dto V
  \]
  in $D([0,1],\RR)$ endowed with the $M_2$ topology.
\end{theorem}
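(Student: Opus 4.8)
The plan is to deduce the statement from \Cref{thm:PartialSumConvinE} by checking that \Cref{hypo:PartialSumConvM2} forces all the decorations of the limit $V'$ to reduce to jump intervals, so that $V'$ lies in the copy $D'$ of $D$ inside $E$, and then to pull the convergence back along the homeomorphism of \Cref{lem:homeomorphismD}.

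First I would compute the decorations of $V'$. Fix $i\in\NN$. Applying \eqref{eq:convM2} to the \iid\ copy $\bsQ_i=\{Q_{i,j},j\in\ZZ\}$ gives, almost surely, $u(\bsQ_i)=-\bigl(\sum_{j}Q_{i,j}\bigr)_-=\min\{0,s(\bsQ_i)\}$ and $v(\bsQ_i)=\bigl(\sum_{j}Q_{i,j}\bigr)_+=\max\{0,s(\bsQ_i)\}$, where $s(\bsQ_i)=\sum_{j\in\ZZ}Q_{i,j}$. Since $P_i>0$ and the jump of $V$ at $T_i$ equals $P_is(\bsQ_i)$ (immediate from the definition of $V$ when $\alpha<1$; and when $1\le\alpha<2$ because $V$ is the a.s.\ uniform limit, along the subsequence of \Cref{lem:newlem}, of the processes $V_\epsilon$ in \eqref{eq:Vepsilon} whose jump at $T_i$ is $s^\epsilon(P_i\bsQ_i)\to P_is(\bsQ_i)$ as $\epsilon\to0$), we obtain
\begin{align*}
  I(T_i) & = V(T_i-)+P_i[u(\bsQ_i),v(\bsQ_i)] \\
  & = \bigl[\min\{V(T_i-),V(T_i)\},\max\{V(T_i-),V(T_i)\}\bigr]=[V(T_i-),V(T_i)]
\end{align*}
in the interval notation of \eqref{D'}. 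Moreover $Disc(V)\subseteq\{T_i\}_{i\in\NN}$: for $\alpha<1$ because $V$ is a pure-jump process with jumps among the $T_i$, and for $1\le\alpha<2$ because each $V_\epsilon$ is continuous off $\{T_i\}$ and a uniform limit of such functions has the same property. Consequently the graph of $V'$ coincides with the completed graph of the \cadlag\ function $V$; that is, $V'$ equals the image of $V$ under the embedding $D\hookrightarrow D'$ from \Cref{lem:homeomorphismD}, so $V'\in D'$ almost surely.

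It then remains to transfer the weak convergence. Each $V_n$ is a \cadlag\ step function, hence, as identified in \eqref{D'}, already an element of $D'\subseteq E$, and by the previous paragraph so is the limit $V'$, which we identify with $V$. Since $D'$ is a Borel subset of $E$ and all the random elements $V_n,V'$ take values in $D'$, the convergence $V_n\dto V'$ in $(E,\metricE)$ supplied by \Cref{thm:PartialSumConvinE} is equivalent to the same convergence in $D'$ with the subspace topology (the routine Portmanteau argument, using that open subsets of $D'$ are traces of open subsets of $E$). By \Cref{lem:homeomorphismD} the map $x'\mapsto x$ is a homeomorphism of $D'$ onto $D$ with the $M_2$ topology, so the continuous mapping theorem yields $V_n\dto V$ in $D([0,1],\RR)$ endowed with the $M_2$ topology. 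The only step that genuinely requires work is the first one -- verifying that \Cref{hypo:PartialSumConvM2} collapses every $I(T_i)$ onto $[V(T_i-),V(T_i)]$ and that $V$ has no discontinuities outside $\{T_i\}$; once this is in place, the transfer is pure bookkeeping built on \Cref{lem:homeomorphismD}.
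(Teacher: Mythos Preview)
Your proof is correct and follows exactly the same route as the paper, which dispatches the result in one sentence by noting that \Cref{hypo:PartialSumConvM2} forces $V'\in D'$ and then invoking \Cref{lem:homeomorphismD} together with the continuous mapping theorem. You have simply fleshed out the verification that each decoration collapses to $[V(T_i-),V(T_i)]$; the paper additionally remarks that the assumption forces $\sum_j Q_j\ne0$ a.s.\ (so in fact $\{T_i\}=Disc(V)$), which you could add for completeness but is already implicit in your graph-coincidence argument.
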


Since the supremum functional is continuous with respect to the $M_2$ topology, this result implies
that the limit of the running supremum of the partial sum process is the running supremum of the
limiting $\alpha$-stable L\'evy process as in the case of \iid\ random variables. 

\begin{example}
  For the linear process
  $ X_t = \sum_{j\in\ZZ} c_j \xi_{t-j}$ from \Cref{subsec:linear} and \Cref{ex:MAinf4}, the
  corresponding sequence $\{Q_j\}$ was given in \eqref{eq:Qlin}. It follows that the Condition
  (\ref{eq:convM2}) can be expressed as
  \begin{align}
    - \left(\sum_{j\in\ZZ} c_j\right)_- = \inf_{k\in\ZZ} \sum_{j\leq k} c_j \leq \sup_{k\in\ZZ}
    \sum_{j\leq k} c_j = \left(\sum_{j\in\ZZ} c_j\right)_+  \; .  \label{eq:condition-BK}
  \end{align}
  This is exactly \cite[Condition~3.2]{basrak:krizmanic:2014}. Note that~(\ref{eq:condition-BK})
  implies that
  \begin{align*}
    \left|\sum_{j\in\ZZ} c_j \right| > 0 \; .
  \end{align*}
\end{example}

\section{Record times}
\label{sec:records}

In this section we study record times in a stationary sequence $\sequence{X}$.  Since 
record times remain unaltered after a strictly increasing transformation, the main result below
holds for stationary sequences with a general marginal distribution as long as they can be
monotonically transformed into a regularly varying sequence.

We start by introducing the notion of records for sequences in $\lo$.  
For  $y\geq 0$  and $\bx= \{x_j\} \in\lo$ define 
\[
R^{\bx} (y) = \sum_{j=-\infty}^\infty  \1{\{x_j > y \vee \sup_{i<j} x_i  \}}\,,
\]
representing the number of records in the sequence $\bx$ larger than $y$, which is finite for $\bx
\in \lo$. For notational simplicity, we suppress notation $\tbx$ in this section.

Let
$\pointmeas = \sum_{i=1}^\infty \delta_{t_i,\bx^i} \in \mcm_p({[0,\infty)
  \times\lo\setminus\{\tilde{\boldsymbol{0}}\}})$, where $\bx^i = \{x^i_j\} \in \lo$. Define, for $t>0$
\begin{align*}
  M^{\pointmeas}(t) = \sup_{t_i \leq t} \|\bx^i\|_\infty \; , \quad  M^{\pointmeas}(t-)  =  \sup_{t_i <  t}  \|\bx^i\|_\infty  \; , 
\end{align*}
where we set $\sup \emptyset = 0$ for convenience. Next, let $R_\pointmeas$ be the (counting) point
process on $(0,\infty)$ defined by
\begin{align*}
R_\pointmeas  & = \sum_{i} \delta_{t_i}  R^{\bx^i} (M^\pointmeas(t_i-))  \; , 
\end{align*}
hence for arbitrary  $0<a < b $ 
\begin{align*}
R_\pointmeas (a,b] & = \sum_{a< t_i \leq b} \sum_{j=-\infty}^\infty  \1{\{x^i_j > M^\pointmeas(t_i-) \vee \sup_{k<j} x^i_k  \}}\; .
\end{align*}

Consider the following subset of $\mcm_p({[0,\infty)\times\lo\setminus\{\tilde{\boldsymbol{0}}\}})$
\begin{align*}
  A = & \{ \pointmeas = \sum_{i=1}^\infty \delta_{t_i,\bx^i} : \mbox{ such that }  M^\pointmeas(t) >0
  \mbox{ for all $t>0$, while } \\
  & \mbox{all $t_i$'s are mutually different as well as all nonzero $x^i_j$'s} \}\,.
\end{align*}

The space $\mcm_p((0,\infty))$ is endowed with the $w^\#$ topology which is equivalent to the usual
vague topology since $(0,\infty)$ is locally compact and separable.
\begin{lemma}
  \label{Lm:RecCont}
  \label{lem:continuity-record}
  The mapping $\gamma\mapsto R_\gamma$ from $\mcm_p({[0,\infty)\times\lo\setminus\{\tilde{\boldsymbol{0}}\}})$ to $\mcm_p((0,\infty))$
  is continuous at every $\pointmeas \in A$.
\end{lemma}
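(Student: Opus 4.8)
The plan is to verify the Daley--Vere-Jones criterion for $w^\#$-convergence in $\mcm_p((0,\infty))$: if $\pointmeas_n \to_{w^\#} \pointmeas$ with $\pointmeas \in A$, I must show $R_{\pointmeas_n}(a,b] \to R_{\pointmeas}(a,b]$ for every $0<a<b$ with $R_\pointmeas(\{a,b\}) = 0$. Since $\pointmeas \in A$, the points $t_i$ are distinct and $M^\pointmeas(t)>0$ for all $t>0$, so almost surely $a$ and $b$ are not among the $t_i$ and $R_\pointmeas(\{a,b\})=0$ is automatic; I will phrase the argument for such $a,b$. The first step is a localization: fix $\delta>0$ small enough that $M^\pointmeas(a/2) > \delta$ (possible since $M^\pointmeas$ is strictly positive and nondecreasing, and $a>0$), and note that any point $\bx^i$ with $\|\bx^i\|_\infty \le \delta$ and $t_i \le b$ contributes $0$ to $R_\pointmeas(a,b]$, because $M^\pointmeas(t_i-) \ge M^\pointmeas(a/2) > \delta \ge \|\bx^i\|_\infty \ge x^i_j$ for all $j$ once $t_i > a/2$, while for $t_i \le a/2 \le a$ there is no contribution anyway. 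Hence $R_\pointmeas(a,b]$ depends only on the \emph{finitely many} points of $\pointmeas$ lying in the bounded set $B_\delta := ([0,b]\setminus\{a,b\}\text{-neighbourhood}) \times \{\|\tbx\|_\infty > \delta\}$, which we may take with $\pointmeas(\partial B_\delta)=0$ by adjusting $\delta$ and shrinking the time window slightly around $a$ and $b$.

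The second step uses the standard consequence of $w^\#$-convergence (an adaptation of \cite[Proposition~3.13]{resnick:1987}, already invoked elsewhere in the paper): the finitely many atoms of $\pointmeas_n$ in $B_\delta$ converge, as a finite collection in $[0,b]\times(\lo\setminus\{\tilde{\boldsymbol 0}\})$, to the finitely many atoms of $\pointmeas$ in $B_\delta$, both in location $t_i$ and in the cluster $\bx^i$ (convergence of $\bx^i_n \to \bx^i$ in the metric $\tilde d'$, hence coordinatewise along the optimal alignment of representatives, since for elements of bounded norm $\tilde d'$-convergence forces $\|\cdot\|_\infty$-convergence and then $\tilde d$-convergence, i.e. convergence of a shifted representative coordinate by coordinate). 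Because $\pointmeas \in A$ all the nonzero coordinates $x^i_j$ are mutually distinct and all the $t_i$ are distinct, so for $n$ large the atoms of $\pointmeas_n$ are in bijective correspondence with those of $\pointmeas$, their time-order is preserved, and the running maxima $M^{\pointmeas_n}(t_i^n-)$ (which are maxima over the finitely many relevant earlier atoms, up to an error controlled by the $\delta$-truncation) converge to $M^\pointmeas(t_i-)$.

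The third step is a continuity-of-the-record-count argument on a fixed finite configuration: the functional $\bx \mapsto R^{\bx}(y)$ is, for fixed $y$, continuous at every $\bx$ whose relevant coordinates are all distinct from each other and from $y$ and whose running maximum is not attained "at infinity" in a degenerate way — which is exactly guaranteed by $\pointmeas\in A$ together with the fact that $M^\pointmeas(t_i-)$ is a maximum of distinct numbers. Concretely, each indicator $\1{\{x^i_j > y \vee \sup_{k<j} x^i_k\}}$ is continuous in $(x^i, y)$ at points where no ties occur among $\{x^i_k : k\le j\}\cup\{y\}$; summing the finitely many nonzero indicators (there are finitely many since $\bx^i\in\lo$) and then summing over the finitely many atoms $t_i \in (a,b]$ gives $R_{\pointmeas_n}(a,b] \to R_\pointmeas(a,b]$. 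The main obstacle is the bookkeeping in this last step: one must handle the records \emph{across} clusters correctly, i.e. check that a record in $\bx^i$ relative to $M^{\pointmeas}(t_i-)$ passes to the limit even though $M^{\pointmeas_n}(t_i^n-)$ is only approximately $M^\pointmeas(t_i-)$; this is where distinctness of all the $x^i_j$ (so that $x^i_j \neq M^\pointmeas(t_i-)$ strictly, giving a gap that absorbs the approximation error) is used decisively, and where one must also argue that atoms of $\pointmeas_n$ with small norm, though numerous, never create spurious records in $(a,b]$ by the same $\delta$-domination as in Step~1.
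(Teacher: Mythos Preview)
Your proposal is correct and follows essentially the same approach as the paper: reduce via the Portmanteau/Daley--Vere-Jones criterion to intervals $(a,b]$ with $a,b\notin\{t_i\}$, threshold out small clusters (the paper uses the level $M^\gamma(a)$ where you use $\delta<M^\gamma(a/2)$), invoke atom-by-atom convergence of the finitely many remaining points from $w^\#$-convergence, and finish with continuity of $(\bx,y)\mapsto R^{\bx}(y)$ at configurations without ties, together with $M^{\gamma_n}(t_{i_l}^n-)\to M^\gamma(t_{i_l}-)$. One stylistic point: the phrase ``almost surely'' in your first step is out of place, since the lemma is deterministic --- you simply restrict to $a,b\notin\{t_i\}$, which suffices because $\{t_i\}$ is countable.
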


\begin{proof}
  Fix an arbitrary $\pointmeas \in A$, and assume a sequence $\{\pointmeas_n\}$ in
  $ \mcm_p([0,\infty)\times\lo\setminus\{\tilde{\boldsymbol{0}}\})$ satisfies $\pointmeas_n \rightarrow_{w^\#} \pointmeas$.  We must
  prove that $R_{\gamma_n}$ converges vaguely to $R_\gamma$ in $\mcm_p((0,\infty))$.  By the
  Portmanteau theorem, it is sufficient to show that that for all $0<a< b \in \{ t_i\}^c$
  \[
  R_{\pointmeas_n} (a,b] \to R_{\pointmeas} (a,b]\,,
  \]
  as $n\to\infty$.  For $0<a< b \in \{ t_i\}^c$ there are finitely many time instances, say
  $t_{i_1},\ldots,t_{i_k}\in(a,b]$ such that $\|\bx^{i_l}\|_{\infty}> M^\pointmeas(a)>0$, hence
  \begin{align}
    \label{dRm}
    R_\pointmeas(a,b] & = \sum_{t_{i} \in (a,b]} R^{\bx^i} (M^{\pointmeas}(t_i-)) = \sum_{l=1}^{k} R^{\bx^{i_l}}
    (M^{\pointmeas}(t_{i_l}-)) \; .
  \end{align}
  For all $ \pointmeas_n = \sum_{i=1}^\infty \delta_{t^n_i,\bx^{n,i}}$ with $n$ large enough, there
  also exist exactly $k$ (depending on $a$ and $b$) time instances
  $t^n_{i_1},\ldots,t^n_{i_k}\in(a,b]$ such that $\|\bx^{n,i_l}\|_{\infty}>
  M^\pointmeas(a)$.
  Moreover, they satisfy $\bx^{n,i_l} \to \bx^{i_l}$ and $t^n_{i_l} \to t_{i_l}$ for $l=1,\ldots k$
  as $n\to\infty$. Hence for $n$ large enough
  \begin{align}
    \label{dRmn}
    R_{\pointmeas_n}(a,b] & = \sum_{t^n_{i} \in (a,b]} R^{\bx^{n,i}} (M^{\pointmeas_n}(t^n_i-)) = \sum_{l=1}^{k}
    R^{\bx^{n,i_l}} (M^{\pointmeas_n}(t^n_{i_l}-)) \;.
  \end{align}
  Assume $y_n \to y>0 $ and $\bx^n \to \bx=\{x_j\}$ where the non zero $x_j$ are pairwise distinct
  and $x_j\not = y$ for all $j\in\ZZ$.  Then, it is straightforward to check that
  \[
  R^{\bx^n}(y_n) \to R^{\bx}(y)\,.
  \]
  Observe further that for the choice of $t_{i_l}$, $t^n_{i_l}$ we made above, it holds that
  $M^{\pointmeas_n} (t^n_{i_l}-) \to M^{\pointmeas} (t_{i_l}-)$ since $t_i$'s are all different.  Together with
  \eqref{dRm} and \eqref{dRmn} this yields
  \[
  R_{\pointmeas_n}(a,b] \to R_{\pointmeas}(a,b]
  \]
  as $n\to\infty$.
\end{proof}

Since we are only interested in records, for convenience we consider a nonnegative stationary
regularly varying sequence $\sequence{X}$.

Adopting the notation of \Cref{Subs:ppc}, we denote
\begin{align} 
  \label{e:N2p}
  \PPC_n = \sum_{i=1}^{\infty} \delta_{(i/k_n,\bsX_{n,j})} \; .
\end{align}
We will also need the point process  $N_n$ defined  by
\begin{align*}
  N_n = \sum_{i=1}^{\infty} \delta_{(i/n,X_{i}/a_n)} \; .
\end{align*} 
The process $N_n$ can be viewed as a point process
on $[0,\infty) \times \RR\setminus\{0\}$, but since $\RR$ can be embedded in $\lo$ (by identifying a
real number $x\ne0$ to a sequence with exactly one nonzero coordinate equal to $x$), in the sequel we
treat it as a process on the space $[0,\infty)\times\lo\setminus\{\tilde{\boldsymbol{0}}\}$.

As in the previous section, we will assume that 
\begin{align}
  \PPC_n\dto \PPC  = \sum_{i=1}^\infty\delta_{(T_i, P_i\bsQ_{i})} \; , \label{eq:Repr-rec}
\end{align}
as $n\to\infty$,
where $\PPC$ has the same form as in \eqref{eq:Repr-sum}, but on the space $\mcm_p({[0,\infty)\times\lo\setminus\{\tilde{\boldsymbol{0}}\}})$. 
 For $\beta$-mixing and linear processes, this
 convergence follows by direct extension of
results in \Cref{sec:poipro} from the state space $[0,1]\times\lo\setminus\{\tilde{\boldsymbol0}\}$  to
$[0,T]\times \lo\setminus\{\tilde{\boldsymbol0}\}$ for arbitrary  $T>0$.

\begin{theorem}
  \label{thm:RecordTimes}
  Let $\sequence{X}$ be a stationary regularly varying sequence with tail index $\alpha>0$. Assume
  that the convergence in~(\ref{eq:Repr-rec}) holds and moreover that 
  $$
  \PP ( \mbox{all nonzero } Q_{1,j}\mbox{'s are mutually different}) = 1 \; .
  $$  
  Then
  \[
  R_{N_n} \dto R_{\PPC}\,,
  \]
  in $\mcm_p((0,\infty))$. Moreover, the limiting process is a compound Poisson process with
  representation
  \[
  R_{\PPC} = \sum_{i\in\ZZ} \delta_{\tau_i} \kappa_i \,,
  \]
  where $\sum_{i\in\ZZ} \delta_{\tau_i}$ is a Poisson point process on $(0,\infty)$ with intensity
  measure $x^{-1}\rmd x$ and $\{\kappa_i\}$ is a sequence of \iid\ random variables independent of
  it with the same distribution as the integer valued random variable $R^{\bsQ_1} (1/\paretorec)$
  where $\paretorec$ is a Pareto random variable with tail index $\alpha$, independent of~$\bsQ_1$.
\end{theorem}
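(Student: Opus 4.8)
The plan is to push the convergence through the block point process $\PPC_n$, whose limit $\PPC$ from \eqref{eq:Repr-rec} is already understood, and then to read off the compound Poisson structure by unwinding the record mechanism of $\PPC$. The first observation is that, since $\{X_j\}$ is nonnegative, a record of the sequence occurring at a position $m$ in the $i$-th block $\{(i-1)r_n+1,\dots,ir_n\}$ is exactly a coordinate of $\bsX_{n,i}$ that is strictly larger than $M^{\PPC_n}(i/k_n-)=\max_{1\le l\le(i-1)r_n}X_l/a_n$ and than all earlier coordinates of the same block; hence $R^{\bsX_{n,i}}\bigl(M^{\PPC_n}(i/k_n-)\bigr)$ counts precisely the records of $\{X_j\}$ falling inside block $i$, so that for $0<c<d$ the quantity $R_{\PPC_n}(c,d]$ counts the records at positions $m$ with $\lceil m/r_n\rceil\in(ck_n,dk_n]$, whereas $R_{N_n}(c,d]$ counts those at positions $m\in(cn,dn]$. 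Moreover $\PPC\in A$ almost surely: the points $T_i$ are a.s.\ distinct, $M^{\PPC}(t)>0$ for every $t>0$ because the mean measure $Leb\times d(-\theta y^{-\alpha})$ charges every set $(0,t]\times(0,\infty)$ with infinite mass, and the nonzero coordinates $P_iQ_{i,j}$ are a.s.\ mutually distinct by the hypothesis on $\bsQ_1$ together with the independence and diffuseness of $\{P_i\}$; so \Cref{lem:continuity-record} and the continuous mapping theorem yield $R_{\PPC_n}\dto R_{\PPC}$ in $\mcm_p((0,\infty))$.

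Next I would show that $R_{N_n}$ and $R_{\PPC_n}$ are asymptotically equal. Since $n-k_nr_n<r_n=o(n)$, the position ranges $(cn,dn]$ and $\{m:\lceil m/r_n\rceil\in(ck_n,dk_n]\}$ differ only in windows of length $O(r_n)$ around $cn$ and $dn$; consequently, for every $\delta>0$ and all large $n$,
\[
\bigl|R_{N_n}(c,d]-R_{\PPC_n}(c,d]\bigr|\le R_{\PPC_n}\bigl((c-\delta)_+,c+\delta\bigr]+R_{\PPC_n}\bigl((d-\delta)_+,d+\delta\bigr].
\]
Letting $n\to\infty$ and using $R_{\PPC_n}\dto R_{\PPC}$, then $\delta\downarrow0$ together with $R_{\PPC}((c-\delta)_+,c+\delta]\downarrow R_{\PPC}(\{c\})=0$ a.s., gives $R_{N_n}(c,d]-R_{\PPC_n}(c,d]\Pto0$, so by Slutsky $R_{N_n}(c,d]\dto R_{\PPC}(c,d]$; running the same sandwich jointly over finitely many half-open intervals and invoking the convergence-determining criterion for point processes recalled in \Cref{sec:preliminarues-weakstar} yields $R_{N_n}\dto R_{\PPC}$.

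It remains to identify $R_{\PPC}$. Writing $\bar P(t)=M^{\PPC}(t)=\sup_{T_i\le t}P_i$ and using $\|\bsQ_i\|_\infty=1$, one has $R^{P_i\bsQ_i}(y)=R^{\bsQ_i}(y/P_i)$, which vanishes as soon as $y\ge P_i$; hence in $R_{\PPC}=\sum_i\delta_{T_i}R^{P_i\bsQ_i}(\bar P(T_i-))$ only the indices $i$ with $P_i>\bar P(T_i-)$ --- the record points of the Poisson process $\sum_i\delta_{(T_i,P_i)}$ --- contribute, the $i$-th such contribution being $R^{\bsQ_i}(\bar P(T_i-)/P_i)$. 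I would then invoke the classical description of the record structure of a Poisson process on $[0,\infty)\times(0,\infty]$ with mean measure $Leb\times d(-\theta y^{-\alpha})$ (see \cite{resnick:1987} and \cite{arratia:1998}): its record times form a Poisson point process on $(0,\infty)$ with intensity $x^{-1}\rmd x$ (the constant $\theta$ cancelling, since the extremal process generated is that of the $\theta=1$ case up to the deterministic factor $\theta^{1/\alpha}$), the ratios $V_{k-1}/V_k$ of consecutive record values are i.i.d.\ distributed as $1/\paretorec$ with $\paretorec$ Pareto with index $\alpha$, and, by the independent marking theorem, the $\bsQ$-marks attached to the successive record points are i.i.d.\ copies of $\bsQ_1$, independent of the record times and of the record values. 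Substituting, the contribution at the $k$-th record time $\tau_k$ equals $R^{\bsQ_k}(V_{k-1}/V_k)\stackrel{d}{=}R^{\bsQ_1}(1/\paretorec)$, i.i.d.\ in $k$ and independent of $\{\tau_k\}$, which is precisely the claimed compound Poisson representation $R_{\PPC}=\sum_{i\in\ZZ}\delta_{\tau_i}\kappa_i$.

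The main obstacle is the passage from $R_{\PPC_n}$ to $R_{N_n}$: one must check, uniformly over the relevant intervals, that relabelling records by their block index $i/k_n$ rather than by their exact scaled position $m/n$, and discarding the incomplete final block, introduce only asymptotically negligible discrepancies; this uses only that $r_n=o(n)$, via the tightness bound above. Once this comparison is in place, the identification of $R_{\PPC}$ is routine bookkeeping built on standard facts about Poisson and extremal processes.
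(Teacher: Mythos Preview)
Your proposal is correct and follows essentially the same strategy as the paper: apply \Cref{lem:continuity-record} together with the continuous mapping theorem to obtain $R_{\PPC_n}\dto R_{\PPC}$, transfer this to $R_{N_n}$ using that both processes record the same events at time labels differing by $O(r_n/n)$, and then identify the compound Poisson structure via the classical record theory for Poisson/extremal processes from \cite{resnick:1987}. The only minor technical difference is that the paper carries out the comparison between $R_{N_n}$ and $R_{\PPC_n}$ via Laplace functionals and uniform continuity of test functions rather than your sandwich bound on interval counts, but the underlying idea is the same.
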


\begin{proof}
  Since they are constructed from the same sequence $X_1,X_2,\ldots$, the record values of the point
  process $\PPC_n $ in \eqref{e:N2p} correspond to the record values of the point process $N_n$.
Using (\ref{eq:Repr-rec}) and the additional assumption on the $Q$'s, by
  \Cref{Lm:RecCont} it follows that
  \begin{align}
    \label{RecConv1}
    R_{\PPC_n}\dto R_{\PPC}\,.
  \end{align}
  Note that record times $i/n$ of the process $N_n$ appear at slightly altered times $(\lfloor i/r_n
  \rfloor +1) /k_n$ in the process $\PPC_n$.  However, asymptotically the record times are very
  close. Indeed, take $f \in C_K^+(0,\infty)$, then $f$ has a support on the set of the form
  $(a,b]$, $0<a<b$, which can be enlarged slightly to a set $(a-\epsilon,b+\epsilon]$, where
  $0<a-\epsilon$ for a sufficiently small $\epsilon >0$. Clearly, $f$ is continuous on that set,
  even uniformly.  Note further that for any such $\epsilon$ there is an integer $n_0$ such that for
  $n\geq n_0 $, $i/n \in (a,b]$ implies $(\lfloor i/r_n \rfloor +1) /k_n \in
  (a-\epsilon,b+\epsilon]$, and vice versa. Moreover, by uniform continuity of the function $f$,
  $n_0$ can be chosen such that for $n \geq n_0$,
  \[
  \left| f\left( \frac in \right) - f\left( \frac{\lfloor i/r_n \rfloor +1}{k_n} \right) \right|
  \leq \epsilon \; .
  \]
  Consider now the difference between Laplace functionals of the point processes $R_{\PPC_n}$ and
  $R_{N_n}$ for a function $f\in C_K^+(0,\infty)$ as above. Since $\epsilon$ above can be made
  arbitrarily small, it follows that
  \[
  \left| \EE [\rme^{-f(R_{\PPC_n})}] - \EE [\rme^{-f(R_{N_n})}] \right|\to 0 \; ,
  \]
  which together with \eqref{RecConv1} yields the convergence statement of the theorem.
	
  Consider now a point measure
  $\pointmeas = \sum_{i=1}^\infty \delta_{t_i,\bx^i} \in \mcm_p({[0,\infty)\times\lo\setminus\{\tilde{\boldsymbol{0}}\}})$, but such
  that all $\bx^i$ have only nonnegative components and all $t_i$'s are mutually different.  We say
  that a point measure $\gamma$ has a record at time $t$ if
  $(t,\bx) \in \{(t_i,\bx^i) : i \geq 1 \}$ and
  $M^{\pointmeas}(t-) = \sup_{t_i < t} \|\bx^i\|_{\infty} < \|\bx\|_{\infty}$\,. Taking the order into account, at
  time $t$ we will see exactly $R^{\bx} (M^{\pointmeas}(t-)) $ records.  Similarly, a point measure
  $\eta = \sum_{i=1}^\infty \delta_{t_i,x_i} \in \mcm_p({[0,\infty)\times[0,\infty)})$ has a record at
  time $t$ with corresponding record value $x$, if $(t,x) \in \{(t_i,x_i) : i \geq 1 \}$ and
  $\eta([0,t)\times [x,\infty)) = 0$.

  To prove the representation of the limit, observe that
  $\PPC = \sum_{i=1}^\infty \delta_{(T_i, P_i\bsQ_{i})} $ has records at exactly the same time
  instances as the process $M_0 = \sum_{i=1}^\infty \delta_{(T_i, P_i)} $, since by the assumptions
  of the theorem and by definition of the sequences $\bsQ_i$, all of their components are in $[0,1]$
  with one of them being exactly equal to 1.  Because $M_0$ is a Poisson point process on
  $[0,\infty)\times(0,\infty]$ with intensity measure $\rmd x \times \rmd(-\theta y^{-\alpha})$, it
  has infinitely many points of in any set of the form $[a,b]\times[0,\epsilon]$ with $a<b$ and
  $\epsilon>0$. Hence, one can a.s. write the record times of $M_0$ as a double sided sequence
  $\tau_n,\ n \in \ZZ$, such that $\tau_n < \tau_{n+1}$ for each $n$.  Fix an arbitrary $s>0$, and
  assume without loss of generality that $\tau_1$ represents the first record time strictly greater
  than $s$, i.e.  $\tau_1= \inf\{\tau_i : \tau_i >s\}$.  Denote the corresponding successive record
  values by $U_n$; they clearly satisfy $U_n < U_{n+1}$ and $U_n = \sup_{T_i\leq \tau_n} P_i$ and
  $U_0 = \sup_{T_i\leq s} P_i$.  According to \cite[Proposition~4.9]{resnick:1987},
  $\sum_{n\in\ZZ}\delta_{\tau_n}$ is a Poisson point process with intensity $x^{-1}\rmd x$ on
  $(0,\infty)$.  Apply now \cite[Proposition~4.7~(iv)]{resnick:1987} (note that $U_0$ corresponds to
  $Y(s)$ in the notation of that proposition) to prove that $\{U_{n}/U_{n-1},n\geq 1\}$ is a
  sequence of \iid\ random variables with a Pareto distribution with tail index $\alpha$.  Because
  the record times $\tau_n$ and record values $U_n$ for $n\geq 1$ of the point process $M_0$ match
  the records of point process $\PPC = \sum_{i=1}^\infty \delta_{(T_i, P_i\bsQ_{i})}$ on the
  interval $(s,\infty)$, we just need to count how many of them appear at any give time $\tau_n$
  which are larger than the previous record $U_{n-1}$. If, say, $\tau_n=T_i$, that number
  corresponds to the number of $Q_{ij}$'s which after multiplication by the corresponding $U_n=P_i$
  represent a record larger than $U_{n-1}$.  Hence, that random number has the same distribution as
  \[
  \kappa = R^{\bsQ} (U_{1}/U_{0}) \; . 
  \]
  Recall that $s>0$ was arbitrary. Now since the point process
  $\sum_{i=1}^\infty \delta_{(T_i, P_i)} $ and therefore sequence $ \{U_n/U_{n-1}, n\geq 1 \}$ is
  independent of the \iid\ random elements $\{\bsQ_i\}$ and since $U_1/U_0$ has a Pareto
distribution with tail index $\alpha$, the claim follows.
\end{proof}

\begin{example} 
  For an illustration of the previous theorem, consider the moving average process of order
  $1$ 
  \begin{align*}
    X_t = \xi_{t} + c \xi_{t-1} \; ,
  \end{align*}
  for a sequence of \iid\ nonnegative random variables $\{\xi_t,t\in\ZZ\}$ with regularly varying
  distribution and the tail index $\alpha>0$. Assume further that $c>1$. By \eqref{eq:Qlin}, the sequence
  $\{Q_j\},$ as a random element in the space $\lo,$ is in this case equal to the deterministic sequence
  $\{\ldots, 0, 1/c,1,0,\ldots \}$. Intuitively speaking, in each cluster of extremely large values,
  there are exactly two successive extreme values with the second one $c$ times larger that the
  first. Therefore, each such cluster can give rise to at most $2$ records. By straightforward
  calculations, the random variables $\kappa_i$ from \Cref{thm:RecordTimes} have the following
  distribution
  \[
  \PP(\kappa_i=2)=\PP(1/\paretorec\leq 1/c)=\PP(\paretorec\geq c)=\frac{1}{c^\alpha}=1-\PP(\kappa_i=1) \; .
  \]
\end{example}

\section{Lemmas}
\label{sec:proof-trivia}

\subsection{Metric on the space $\lo$}
Let $(\mathbb{X},d)$ be a metric space, we define the distance between $x\in \mathbb{X}$ and subset $B\subset \mathbb{X}$ by
$d(x,B)=\inf\{d(x,y):y\in B\}.$ Let $\sim$ be an equivalence relation on $\mathbb{X}$ and let $\tilde{\mathbb{X}}$ be
the induced quotient space. Define a function $\dtilde:\tilde{\mathbb{X}}\times\tilde{\mathbb{X}}\rightarrow
[0,\infty)$ by:
  \begin{align*}
    \dtilde(\tilde{x},\tilde{y}) = \inf\{d(x',y'):x'\in\tilde{x},y'\in\tilde{y}\} \;, 
  \end{align*}
  for all $\tilde{x},\tilde{y}\in \tilde{\mathbb{X}}.$
\begin{lemma}
  \label{lem:quotient-complet}
  Let $(\mathbb{X},d)$ be a complete separable metric space. Assume that for all $\tilde{x},\tilde{y}\in\tilde{\mathbb{X}}$ and
  all $x,x'\in\tilde{x}$ we have
  \begin{align}
    d(x,\tilde{y})=d(x',\tilde{y}). \;
    \label{eq:condition-tilded}
  \end{align}
  Then $\dtilde$ is a pseudo-metric which makes $\tilde{\mathbb{X}}$ a separable and
  complete pseudo-metric space.
\end{lemma}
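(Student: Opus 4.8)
The plan is to first isolate the one useful consequence of hypothesis~\eqref{eq:condition-tilded}: since $d(x',\tilde y)$ does not depend on the choice of representative $x'\in\tilde x$, one has, for \emph{every} fixed $x\in\tilde x$,
$\dtilde(\tilde x,\tilde y)=\inf\{d(x',y'):x'\in\tilde x,\,y'\in\tilde y\}=\inf_{x'\in\tilde x} d(x',\tilde y)=d(x,\tilde y)$.
With this identity in hand the pseudo-metric axioms are immediate: $\dtilde$ is clearly nonnegative and symmetric, $\dtilde(\tilde x,\tilde x)\le d(x,x)=0$, and for the triangle inequality I would fix $x\in\tilde x$ and note that for every $y'\in\tilde y$, $d(x,\tilde z)\le d(x,y')+d(y',\tilde z)=d(x,y')+\dtilde(\tilde y,\tilde z)$; taking the infimum over $y'\in\tilde y$ gives $\dtilde(\tilde x,\tilde z)=d(x,\tilde z)\le d(x,\tilde y)+\dtilde(\tilde y,\tilde z)=\dtilde(\tilde x,\tilde y)+\dtilde(\tilde y,\tilde z)$. (Note $\dtilde$ need not separate points, which is why only the pseudo-metric property is asserted.)

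For separability I would take a countable dense subset $\{x_n\}$ of $(\mathbb{X},d)$ and check that $\{\tilde x_n\}$ is dense in $(\tilde{\mathbb{X}},\dtilde)$: given $\tilde y$ and $\epsilon>0$, choose any $y\in\tilde y$ and an index $n$ with $d(x_n,y)<\epsilon$; then $\dtilde(\tilde x_n,\tilde y)\le d(x_n,y)<\epsilon$.

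The substantive part is completeness. Given a $\dtilde$-Cauchy sequence $\{\tilde x_n\}$, I would pass to a subsequence with $\dtilde(\tilde x_{n_k},\tilde x_{n_{k+1}})<2^{-k}$ and then construct representatives inductively: pick $x^{(1)}\in\tilde x_{n_1}$ arbitrarily and, given $x^{(k)}\in\tilde x_{n_k}$, use the identity from the first paragraph, namely $d(x^{(k)},\tilde x_{n_{k+1}})=\dtilde(\tilde x_{n_k},\tilde x_{n_{k+1}})<2^{-k}$, to select $x^{(k+1)}\in\tilde x_{n_{k+1}}$ with $d(x^{(k)},x^{(k+1)})<2^{-k}$. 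Then $\{x^{(k)}\}$ is $d$-Cauchy, since the tail starting at index $k$ has diameter at most $\sum_{j\ge k}2^{-j}=2^{-k+1}$, so by completeness of $\mathbb{X}$ it converges to some $x\in\mathbb{X}$, and $\dtilde(\tilde x_{n_k},\tilde x)\le d(x^{(k)},x)\to0$. A standard argument then upgrades this to convergence of the whole Cauchy sequence: $\dtilde(\tilde x_n,\tilde x)\le\dtilde(\tilde x_n,\tilde x_{n_k})+\dtilde(\tilde x_{n_k},\tilde x)$, and both terms are small for $n$ and $n_k$ large.

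The one delicate point, and precisely the place where hypothesis~\eqref{eq:condition-tilded} is genuinely needed, is this inductive choice of representatives: at step $k$ the representative $x^{(k)}\in\tilde x_{n_k}$ is already fixed, and one must find a representative of $\tilde x_{n_{k+1}}$ that is $d$-close to \emph{that particular} $x^{(k)}$, not merely to some unspecified representative of $\tilde x_{n_k}$. The hypothesis guarantees that $d(x^{(k)},\tilde x_{n_{k+1}})$ equals $\dtilde(\tilde x_{n_k},\tilde x_{n_{k+1}})$ no matter which representative one sits at, so the required approximating representative of $\tilde x_{n_{k+1}}$ always exists and the chain $x^{(1)},x^{(2)},\dots$ stays Cauchy; without it one could not close up the construction.
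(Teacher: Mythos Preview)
Your proof is correct and follows essentially the same approach as the paper: both verify the triangle inequality via the hypothesis, note that separability is inherited from a countable dense set in $\mathbb{X}$, and prove completeness by passing to a fast Cauchy subsequence, inductively lifting to representatives $x^{(k)}\in\tilde x_{n_k}$ with $d(x^{(k)},x^{(k+1)})<2^{-k}$ (the paper uses $2^{-(k+1)}$), and invoking completeness of $\mathbb{X}$. Your explicit articulation of the identity $\dtilde(\tilde x,\tilde y)=d(x,\tilde y)$ for any fixed $x\in\tilde x$ is a nice touch that the paper leaves implicit, and your final paragraph correctly isolates exactly where the hypothesis is used.
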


\begin{proof}
  To prove that $\dtilde$ is a pseudo-metric, the only nontrivial step is to show that $\dtilde$
  satisfies the triangle inequality, but that is implied by
  Condition~(\ref{eq:condition-tilded}). Separability is easy to check and it remains to prove that
  $(\tilde{\mathbb{X}},\dtilde)$ is complete.
  
  Let $\{\tilde{x}_n\}$ be a Cauchy sequence in $(\tilde{\mathbb{X}},\dtilde)$. Then we can find a
  strictly increasing sequence of nonnegative integers $\{n_k\}$ such that
  $$\dtilde(\tilde{x}_m,\tilde{x}_n)<\frac{1}{2^{k+1}},$$
  for all integers $m,n\geq n_k$ and for every integer $k\geq 1$.
  We define a sequence of elements $\{y_n\}$ in $\mathbb{X}$ inductively as follows:
  \begin{enumerate}[-]
  \item Let $y_1$ be an arbitrary element of $\tilde{x}_{n_1}.$
  \item For $k\geq 1$ let $y_{k+1}$ be an element of $\tilde{x}_{n_{k+1}}$ such that
    $d(y_k,y_{k+1})<\frac{1}{2^{k+1}}.$ Such an $y_{k+1}$ exists by
    Condition~(\ref{eq:condition-tilded}).
  \end{enumerate}
  Then the sequence $\{y_n\}$ is a Cauchy sequence in $(\mathbb{X},d)$. Indeed, for every $k\geq 1$
  and for all $m,n\geq k$ we have that
  $$
  d(y_m,y_n) \leq \sum_{l=m\wedge n}^{m\vee n -1} d(y_l,y_{l+1}) <
  \sum_{l=k}^{\infty}\frac{1}{2^{l+1}}=\frac{1}{2^k} \; .
  $$
  Since $(\mathbb{X},d)$ is complete, the sequence $\{y_n\}$ converges to some $x\in \mathbb{X}$.
  Let~$\tilde{x}$ be the equivalence class of $x$.  It follows that the sequence
  $\{\tilde{x}_{n_k}\}$ converges to $\tilde{x}$ because $\dtilde(\tilde{x}_{n_k},\tilde{x})\leq
  d(y_k,x)$ by definition od $\dtilde$. Finally, since $\{\tilde{x}_n\}$ is a Cauchy sequence, it
  follows easily that the whole sequence $\{\tilde{x}_n\}$ also converges to $\tilde{x}$, hence
  $(\tilde{\mathbb{X}},\dtilde)$ is complete.
\end{proof}

\begin{proof}[Proof of \Cref{lem:lo-complete}]
  Since we have $\|\theta^k\bx-\theta^l\by\|_\infty=\|\theta^{k-l}\bx-\by\|_\infty$ for all
  $\bx,\by\in l_0$ and $k,l\in\ZZ$, it follows that
  \begin{align*}
    \tilde{d}(\tilde{\bx},\tilde{\by}) =
    \inf\{\|\theta^k\bx-\by\|_\infty:k\in\ZZ\}=\inf\{\|\bx'-\by\|_\infty:\bx'\in\tilde{\bx}\} \; ,
  \end{align*}
  for all $\tilde{\bx},\tilde{\by}\in\lo$, and all $\bx\in\tilde{\bx},\by\in\tilde{\by}$. In view of
  Lemma \ref{lem:quotient-complet} it only remains to show that $\tilde{d}$ is a metric, rather than
  just a pseudo-metric.

  Assume that $\tilde{d}(\tilde{\bx},\tilde{\by})=0$ for some $\tilde{\bx},\tilde{\by}\in\lo$. Then,
  for arbitrary $\bx\in\tilde{\bx},\by\in\tilde{\by}$, there exists a sequence of integers $\{k_n\}$
  such that $\|\theta^{k_n}\bx-\by\|_\infty\rightarrow 0$, as $n\rightarrow\infty$. It suffices to
  show that the sequence $\{k_n\}$ is bounded. Indeed, by passing to a convergent subsequence it
  follows that there exists an integer $k$ such that $\by=\theta^k\bx$, hence
  $\tilde{\bx}=\tilde{\by}.$ Suppose now that the sequence $\{k_n\}$ is unbounded and that $\by\neq
  0$ (the case when $\by=0$ is trivial).  Without loss of generality, we can assume that
  $k_n\rightarrow\infty$, as $n\rightarrow \infty$. Since $\by\neq 0$ and $\lim_{|i|\to\infty}y_i=
  0$, there exists integers $i_0$ and $N>0$ such that $|y_{i_0}|=\|\by\|_\infty>0$ and
  $|y_{i}|<\|\by\|_\infty / 4$ for $|i|\geq N$. Since $\|\theta^{k_n}\bx-\by\|_\infty\rightarrow 0$
  there exists an integer $n_0>0$ such that $\|\theta^{k_n}\bx-\by\|_\infty<\|\by\|_\infty / 4$ for
  $n\geq n_0$. By our assumption, we can find an integer $n\geq n_0$ such that $k_n-k_{n_0}+i_0\geq
  N$, and it follows that
  \begin{align*}
    \frac{3}{4}\|\by\|_\infty < |(\theta^{k_n}\bx)_{i_0}| = |x_{k_n+i_0}| =
    |(\theta^{k_{n_0}}\bx)_{k_n-k_{n_0}+i_0}| < \frac{1}{2}\|\by\|_\infty \; , 
  \end{align*}
  which is a contradiction. Hence, the sequence $\{k_n\}$ is bounded.
\end{proof}

\subsection{\Cref{hypo:Aprimecluster} is a consequence of $\beta$-mixing}
\label{sec:betamixing}
The $\beta$-mixing coefficients of the sequence $\{X_j,j\in\ZZ\}$ are defined by 
\begin{align*}
  \beta_n = \frac12 \sup_{\mca,\mcb}  \sum_{i\in I} \sum_{j\in J}  | \PP(A_i\cap B_j) - \PP(A_i)\PP(B_j)| \; , 
\end{align*}
where the supremum is taken over all finite partitions $\mca=\{A_i,i\in I\}$ and
$\mcb=\{B_j,j\in J\}$ such that the sets $A_i$ are measurable with respect to $\sigma(X_k,k\leq0)$
and the sets $B_j$ are measurable with respect to $\sigma(X_k,k\geq n)$. See \cite[Section~1.6]{rio:2000}.
\begin{lemma}
  \label{lem:beta-implies-A}
  Assume that the sequence $\{X_j\}$ is $\beta$-mixing with coefficients
  $\{\beta_j,j\in\NN\}$. Assume that there exists a sequence $r_n$ satisfying \Cref{hypo:AC}
  and a sequence $\ell_n$ such that 
  \begin{align}
    \lim_{n\to\infty} \frac{\ell_n}{r_n} = \lim_{n\to\infty} \frac{n}{r_n} \beta_{\ell_n} = 0 \;
    .  \label{eq:condition-beta}
  \end{align}
  Then \Cref{hypo:Aprimecluster} holds.
\end{lemma}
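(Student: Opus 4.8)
The plan is the classical big-block/small-block argument combined with a telescoping estimate. First I would, inside the $i$-th block, discard the last $\ell_n$ observations: let $\bsX_{n,i}'\in\lo$ be the equivalence class of $(X_{(i-1)r_n+1},\dots,X_{ir_n-\ell_n},0,\dots,0)/a_n$, i.e.\ $\bsX_{n,i}$ with its last $\ell_n$ coordinates set to zero. Fix $f\in\mathcal{F}_+$ with $f(t,\tbx)=f(t,\tbx_\epsilon)$ for some $\epsilon>0$. Since $f$ depends only on coordinates of modulus larger than $\epsilon$, the $\epsilon$-truncations of $\bsX_{n,i}$ and $\bsX_{n,i}'$ agree, hence $f(i/k_n,\bsX_{n,i})=f(i/k_n,\bsX_{n,i}')$, off the event $E_{n,i}=\{\max_{ir_n-\ell_n<j\le ir_n}\|X_j\|>\epsilon a_n\}$; by stationarity and a union bound, $\PP(E_{n,i})\le\ell_n\PP(\|X_0\|>\epsilon a_n)$.

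Next I would replace $\bsX_{n,i}$ by $\bsX_{n,i}'$ in both terms of \Cref{hypo:Aprimecluster}. Writing $\xi_i=\rme^{-f(i/k_n,\bsX_{n,i}')}\in[0,1]$ (nonnegativity of $f$ is used here), the elementary inequality $|\prod a_i-\prod b_i|\le\sum|a_i-b_i|$ for numbers in $[0,1]$ shows that both $|\EE[\rme^{-\PPC_n(f)}]-\EE[\prod_{i=1}^{k_n}\xi_i]|$ and $|\prod_{i=1}^{k_n}\EE[\rme^{-f(i/k_n,\bsX_{n,i})}]-\prod_{i=1}^{k_n}\EE[\xi_i]|$ are bounded by $\sum_{i=1}^{k_n}\PP(E_{n,i})\le k_n\ell_n\PP(\|X_0\|>\epsilon a_n)$. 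Since $k_n\le n/r_n$ and $n\PP(\|X_0\|>\epsilon a_n)\to\epsilon^{-\alpha}$ by \eqref{eq:a}, this bound is $O(\ell_n/r_n)$, which vanishes by \eqref{eq:condition-beta}. So it remains to show $\EE[\prod_{i=1}^{k_n}\xi_i]-\prod_{i=1}^{k_n}\EE[\xi_i]\to0$.

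This is where $\beta$-mixing enters. Each $\xi_i$ is measurable with respect to $\sigma(X_j:(i-1)r_n+1\le j\le ir_n-\ell_n)$, so for every $i$ the $\sigma$-field generated by $\xi_1,\dots,\xi_{i-1}$ and the one generated by $\xi_i$ are separated in time by at least $\ell_n$. Setting $A_i=\EE[\prod_{j\le i}\xi_j]\prod_{j>i}\EE[\xi_j]$ with empty products equal to $1$, one has $A_{k_n}=\EE[\prod_i\xi_i]$, $A_0=\prod_i\EE[\xi_i]$, and the telescoping increment $|A_i-A_{i-1}|$ is at most $|\EE[(\prod_{j<i}\xi_j)\xi_i]-\EE[\prod_{j<i}\xi_j]\EE[\xi_i]|$, which is $O(\beta_{\ell_n})$ by the standard covariance bound for $\beta$-mixing sequences of uniformly bounded random variables (see \cite[Section~1.6]{rio:2000}, or argue via Berbee's coupling by replacing $\xi_i$ with an independent copy). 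Summing the $k_n$ increments gives $|\EE[\prod_i\xi_i]-\prod_i\EE[\xi_i]|=O(k_n\beta_{\ell_n})=O((n/r_n)\beta_{\ell_n})$, which tends to $0$ by \eqref{eq:condition-beta}. Combining the three estimates proves the lemma.

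The only delicate point is the last step: passing from $\beta$-mixing, which only controls a single covariance, to a bound on the error of the full $k_n$-fold product that remains negligible after multiplication by $k_n\sim n/r_n$. The $\ell_n$-wide buffer created by discarding the small blocks is exactly what makes the telescoping work, and condition \eqref{eq:condition-beta} is chosen precisely so that the small-block error $\ell_n/r_n$ and the mixing error $(n/r_n)\beta_{\ell_n}$ both vanish; so although this is the technical core of the proof, there is no real obstruction once the decomposition is in place.
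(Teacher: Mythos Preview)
Your proof is correct and follows essentially the same big-block/small-block strategy as the paper: discard the last $\ell_n$ observations in each block (error $O(\ell_n/r_n)$ because $f\in\mathcal{F}_+$ ignores small coordinates), then use $\beta$-mixing across the resulting gaps to replace the blocks by independent ones (error $O(k_n\beta_{\ell_n})$). The only cosmetic difference is that the paper invokes \cite[Lemma~2]{eberlein:1984} to bound the total variation distance between the joint law of the truncated blocks and the product law by $k_n\beta_{\ell_n}$ in one stroke, whereas you unwind this into a telescoping-plus-covariance (or Berbee coupling) argument; these are equivalent, since Eberlein's lemma is proved by precisely that telescoping.
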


\begin{proof}
  Write $X_{i,j}$ for $(X_i,\dots,X_j)$.  Set $k_n = \lfloor n/r_n\rfloor$ and let
  $\tilde{\mathbf{X}}_n$ be the vector of length $k_n(r_n-\ell_n)$ which concatenates all the
  subvectors $X_{(j-1)r_n+1,jr_n-\ell_n}$, $j=1,\dots,k_n$.  Let $\tilde{\mathbf{X}}_n^*$ be the
  vector build with independent blocks $X_{(j-1)r_n+1,jr_n-\ell_n}^*$ which each have the same
  distribution has the corresponding original blocks $X_{(j-1)r_n+1,jr_n-\ell_n}$.  Applying
  \cite[Lemma~2]{eberlein:1984} and~(\ref{eq:condition-beta}), we obtain
  \begin{align}
    \mathrm{d}_{TV}(\mathcal{L}(\tilde{\mathbf{X}}_n),\mathcal{L}(\tilde{\mathbf{X}}_n^*)) \leq k_n \beta_{\ell_n} =
    o(1) \; .  \label{eq:bound-TV}
  \end{align}
  Set $\tilde{X}_{n,i}= a_n^{-1}X_{(j-1)r_n+1,jr_n-\ell_n}$ and
  $\tilde{X}^*_{n,i}=a_n^{-1}X^*_{(j-1)r_n+1,jr_n-\ell_n}$ and define the following point processes
  \begin{align*}
    \tilde{N}''_n = \sum_{i=1}^{k_n} \delta_{(i/k_n,\tilde{X}_{n,i})} \; , \ \ \tilde{N}^*_n =
    \sum_{i=1}^{k_n} \delta_{(i/k_n,X^*_{n,i})} \; .
  \end{align*}
  Let $f$ be a nonnegative function defined on $[0,1]\times\tilde\ell_0$. 
  Since the exponential of a negative function is less than~1, by definition of the total variation
  distance, the bound (\ref{eq:bound-TV}) yields
  \begin{align}
    \left| \EE\left[ \rme^{-\tilde{N}''_n(f)}\right] - \EE\left[ \rme^{-\tilde{N}^*_n(f)}\right]\right| \leq
      \mathrm{d}_{TV}(\mathcal{L}(\tilde{\mathbf{X}}_n),\mathcal{L}(\tilde{\mathbf{X}}_n^*)) = o(1) \;
      .  \label{eq:bound-laplace}
  \end{align}
  We must now check that the same limit holds with the full blocks instead of the truncated blocks.
  Under \Cref{hypo:AC} (which holds for any sequence smaller than $r_n$ hence for $\ell_n$), we know
  by \cite[Proposition~4.2]{basrak:segers:2009} that for every $\epsilon>0$ and every sequence
  $\{\ell_n\}$ such that $\ell_n\to\infty$ and $\ell_n\bar{F}(a_n)\to0$,
  \begin{align}
    \lim_{n\to\infty} \frac{\PP(\max_{1\leq i \leq \ell_n} |X_i| >\epsilon a_n) }{\ell_n \PP(|X_0|>a_n)} 
    = \theta \epsilon^{-\alpha} \; .  \label{eq:bigO}
  \end{align}
  Then, applying (\ref{eq:bigO}) yields,
  \begin{align*}
    \PP\left(\max_{1\leq j\leq k_n} \max_{1\leq i \leq \ell_n} |X_{jr_n-i+1}|>\epsilon a_n\right)
    &    \leq k_n \PP\left( \max_{1\leq i \leq \ell_n} |X_{i}|>\epsilon a_n\right) \\
    & = O(k_n \ell_n\PP(|X_0|>a_n)) = O(\ell_n/r_n) = o(1) \; .
  \end{align*}
  Assume now that $f$ depends only on the components greater than $\epsilon$ in absolute value.
  Then $\PPC_n(f)=\tilde{N}''_n(f)$ unless at least one component at the end of one block is greater
  than~$\epsilon$. This yields
  \begin{align*}
    \left| \EE[\rme^{-N_n''(f)}] -  \EE[\rme^{-\tilde{N}_n''(f)}] \right| 
      \leq \PP\left( \max_{1\leq j\leq k_n} \max_{1\leq i \leq \ell_n} |X_{jr_n-i+1}|>\epsilon a_n\right) = o(1) \; .
  \end{align*}
  The same relation also holds for the independent blocks. Therefore, \Cref{hypo:Aprimecluster}
  holds. %
\end{proof}

\subsection{On continuity of addition in $E$}
The next lemma gives sufficient conditions for continuity of addition in the space $E([0,1],\RR)$.

\begin{lemma}
  \label{lem:ContinuityOfAdditionInE}
  Suppose that $\sequence{x}[\NN][n]$ is a sequence in $D([0,1],\RR)$ and $x'=(x,S,\{I(t) : t\in S\})$
  an element in $E$ such that $x_n\rightarrow x'$ in $E.$ Suppose also that $\sequence{b}[\NN][n]$
  is a sequence in $D([0,1],\RR)$ which converges uniformly to a continuous function $b$ on $[0,1].$
  Then the sequence $\{x_n-b_n\}$ converges in $(E,\metricE)$ to an element $x'-b\in E$ defined by
  $$
  x'-b = (x-b,S,\{I(t)-b(t): t\in S\}) \; .
  $$
\end{lemma}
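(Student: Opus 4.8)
The plan is to establish the convergence in two steps — a reduction to the case of the fixed continuous limit $b$, and then the convergence $x_n-b\to x'-b$ via completed graphs — after first recording that $x'-b=(x-b,S,\{I(t)-b(t):t\in S\})$ really is an element of $E$. For the latter: continuity of $b$ gives $Disc(x-b)=Disc(x)\subseteq S$; each $I(t)-b(t)$ is a closed bounded interval containing $(x-b)(t)=x(t)-b(t)$ and $(x-b)(t-)=x(t-)-b(t)$ because $x(t),x(t-)\in I(t)$ and $b(t-)=b(t)$; and $\mathrm{diam}(I(t)-b(t))=\mathrm{diam}(I(t))$, so only finitely many decorations exceed any fixed length.

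\smallskip
\noindent\emph{Step 1: reduction to $b_n\equiv b$.} Viewing $x_n-b_n$ and $x_n-b$ as elements of $D'\subseteq E$, I would estimate the uniform distance $\unifE(x_n-b_n,x_n-b)$ from \eqref{e:unmetric1}. At any $t$ where $x_n$ and $b_n$ are both continuous the relevant set-valued functions are the singletons $\{(x_n-b_n)(t)\}$ and $\{(x_n-b)(t)\}$, at Hausdorff distance $|b_n(t)-b(t)|\le\|b_n-b\|_\infty$; at every other $t$ both are intervals whose matching endpoints, built from $x_n(t\mp)-b_n(t\mp)$ and $x_n(t\mp)-b(t\mp)$, differ by at most $|b_n(t\mp)-b(t\mp)|\le\|b_n-b\|_\infty$ (for the left endpoints, as limits of $|b_n(s)-b(s)|$ along $s\uparrow t$, using $b(t-)=b(t)$), so writing each point of one interval as the convex combination of its endpoints with the same weights as the matching point of the other shows their Hausdorff distance is again at most $\|b_n-b\|_\infty$ — a mild strengthening of \eqref{eq:HasusdorffforInterval} that does not require the endpoints to be ordered. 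Thus $\unifE(x_n-b_n,x_n-b)\le\|b_n-b\|_\infty\to0$, hence by \eqref{e:unmetric2} also $\metricE(x_n-b_n,x_n-b)\to0$, and by the triangle inequality it is enough to show $x_n-b\to x'-b$ in $(E,\metricE)$.

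\smallskip
\noindent\emph{Step 2: completed graphs.} By \eqref{eq:HausE}, $\metricE$ is the Hausdorff distance $\hausdorff$ between completed graphs, so $\metricE(x_n,x')\to0$ means $\Gamma_{x_n}\to\Gamma_{x'}$ in $\hausdorff$; in particular all but finitely many of these graphs, together with $\Gamma_{x'}$, lie in a common compact box $K=[0,1]\times[-R,R]$. The map $\phi(t,z)=(t,z-b(t))$ is a homeomorphism of $[0,1]\times\RR$ by continuity of $b$, hence uniformly continuous on $K$, and a direct check (again using $b(t-)=b(t)$) shows $\phi(\Gamma_{x_n})=\Gamma_{x_n-b}$ and $\phi(\Gamma_{x'})=\Gamma_{x'-b}$. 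Then the elementary fact that a map uniformly continuous on a compact set $K$ induces a uniformly continuous map on the space of compact subsets of $K$ equipped with $\hausdorff$ yields $\hausdorff(\Gamma_{x_n-b},\Gamma_{x'-b})\to0$, i.e. $\metricE(x_n-b,x'-b)\to0$; combined with Step~1 this gives $x_n-b_n\to x'-b$ in $(E,\metricE)$.

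\smallskip
\noindent The only slightly delicate point I anticipate is the bookkeeping at the jump times in Step~1: since neither $x_n$ nor $b_n$ need be monotone across a jump, the naive form of \eqref{eq:HasusdorffforInterval} does not literally apply, and the convex-combination estimate is what removes this difficulty; there is no deeper obstacle. An alternative route would bypass graphs altogether and instead verify the convergence of the local maxima $M_{t_1,t_2}(\pm(x_n-b))\to M_{t_1,t_2}(\pm(x'-b))$ demanded by \Cref{thm:charofconvE}, but that reduces to the same compactness and uniform-continuity argument for $\phi$.
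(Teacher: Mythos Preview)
Your proof is correct and takes a genuinely different route from the paper's. The paper works directly: given $\epsilon>0$ it fixes a single $\delta$ controlling both the uniform continuity of $b$ and the graph distance $\metricE(x_n,x')$, then for each point $(t,z)\in\Gamma_{x_n-b_n}$ tracks a chain $(t,z)\to(t,z')\in\Gamma_{x_n}\to(s,y)\in\Gamma_{x'}\to(s,y-b(s))\in\Gamma_{x'-b}$ and bounds the total displacement by $5\epsilon$; it also invokes Whitt's Theorem~15.5.1 to argue that establishing only one side of the Hausdorff distance suffices. Your argument is more structural: you first peel off the perturbation $b_n-b$ via the uniform metric $\unifE$ (your convex-combination remark correctly handles the unordered-endpoint issue, including the degenerate case where one side is a singleton), and then observe that subtracting a fixed continuous $b$ is implemented by the uniformly continuous graph map $\phi(t,z)=(t,z-b(t))$, which automatically preserves Hausdorff convergence on the common compact box. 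This yields the full Hausdorff distance directly, without appealing to the one-sided criterion, and makes transparent why continuity of $b$ is exactly what is needed (it is what makes $\phi$ continuous and what guarantees $\phi(\Gamma_{x_n})=\Gamma_{x_n-b}$ at jump times). The paper's approach is more hands-on and self-contained; yours is shorter and isolates the two independent mechanisms at play.
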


\begin{proof}
  Recall the definiton of $\metricE$ given in \eqref{eq:HausE}.  By Whitt~\cite[Theorem
  15.5.1.]{whitt:2002} to show that $x_n-b_n \rightarrow x'-b$ in $E,$ it suffices to prove that
  \begin{equation}
    \label{eq:GraphConvergence}
    \sup_{(t,z)\in \Gamma_{x_n-b_n}}\|(t,z)-\Gamma_{x'-b}\|_{\infty}\rightarrow 0.
  \end{equation}
Take an arbitrary $\epsilon>0.$ Note that $b$ is uniformly continuous so by the conditions of the
lemma there exists $0<\delta\leq \epsilon$ and $n_0\in\NN$ such that
\begin{itemize}
\item[(i)] $|t-s|<\delta \Rightarrow |b(t)-b(s)|<\epsilon,$
\item[(ii)] $\metricE(x_n,x')<\delta,$ for all $n\geq n_0$ and
\item[(iii)] $|b_n(t)-b(t)|<\epsilon,$ for all $t\in[0,1].$
\end{itemize}
Also, since $b$ is continuous, it easily follows that $|b_n(t)-b_n(t-)|\leq 2\epsilon$ for all $n\geq n_0$ and $t\in[0,1].$ 

Take $n\geq n_0$ and a point $(t,z)\in \Gamma_{x_n-b_n},$ i.e. 
\[ 
z\in [(x_n(t-)-b_n(t-))\wedge (x_n(t)-b_n(t)),(x_n(t-)-b_n(t-))\vee (x_n(t)-b_n(t))].
\] 
Since $|b_n(t)-b_n(t-)|\leq 2\epsilon$ there exists $z'\in [x_n(t-)\wedge x_n(t),x_n(t-)\vee x_n(t)]$ (i.e. $(t,z')\in \Gamma_{x_n}$), such that
\[
|(z'-b_n(t))-z|\leq 2\epsilon.
\]
Next, since $\metricE(x_n,x')<\delta,$ there exists a point $(s,y)\in \Gamma_{x'}$ such that
\[
|s-t|\vee|y-z'|<\delta.
\]
Note that $(s,y-b(s))\in \Gamma_{x'-b}$ and by previous arguments
\begin{align*}
  |(y-b(s))-z|&=|(y-b(s))-z + (z' -b_n(t)) - (z' -b_n(t)) + b(t) - b(t)|\\
              &\leq |y-z'|+|b(t)-b(s)|+|b_n(t)-b(t)|+|(z'-b_n(t))-z|\\
              &\leq \delta + \epsilon + \epsilon + 2\epsilon\\
              &\leq5\epsilon.
\end{align*}
Also, $|s-t|<\delta\leq \epsilon.$ Hence, for all $n\geq n_0,$
\[
\sup_{(t,z)\in \Gamma_{x_n-b_n}}\|(t,z)-\Gamma_{x'-b}\|_{\infty}\leq 5\epsilon.
\]
and since $\epsilon$ was arbitrary, (\ref{eq:GraphConvergence}) holds.
\end{proof}

\subsection{A lemma for partial sum convergence in $E$}

\begin{lemma}
  \label{lem:newlem}
  Let $\alpha\in[1,2)$ and let the assumptions of \Cref{thm:PartialSumConvinE} hold.  Then there
  exists an $\alpha$-stable L\'evy process $V$ on $[0,1]$ such that, as $\epsilon\rightarrow 0,$ the
  process $V_\epsilon$ defined in (\ref{eq:Vepsilon}) converges uniformly a.s.  (along some
  subsequence) to $V$.
\end{lemma}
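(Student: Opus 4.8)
The plan is to use that, for each fixed $\epsilon>0$, the process $V_\epsilon$ of \eqref{eq:Vepsilon}, namely $V_\epsilon(t)=\sum_{T_i\leq t}s^\epsilon(P_i\bsQ_i)-t\,b_\epsilon$ with $b_\epsilon=\int_{\{\epsilon<|x|\leq 1\}}x\,\mu(\rmd x)$, is itself a L\'evy process on $[0,1]$: it is built from the Poisson point process $\PPC=\sum_i\delta_{(T_i,P_i\bsQ_i)}$ of intensity $Leb\times\nu$, and since $s^\epsilon(P_i\bsQ_i)\neq0$ forces $P_i>\epsilon$ — a.s.\ only finitely many $i$, by \Cref{rem:useful-W} — it is a well-defined \cadlag\ process with stationary independent increments, whose characteristic exponent is
\[
\log\EE[\rme^{\rmi z V_\epsilon(1)}] = \psi_\epsilon(z) := \theta\int_0^\infty \EE\big[\rme^{\rmi z s^\epsilon(y\bsQ)}-1\big]\,\alpha y^{-\alpha-1}\,\rmd y - \rmi z b_\epsilon \; .
\]
The goal will then be to show that $(V_{\epsilon_k})_k$ is a.s.\ uniformly Cauchy on $[0,1]$ along a suitable sequence $\epsilon_k\downarrow 0$, with limit an $\alpha$-stable L\'evy process.

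The argument splits into two claims. \textbf{(A)} As $\epsilon\to 0$, $\psi_\epsilon(z)\to\psi(z)$ for every $z$, where $\psi$ is an $\alpha$-stable characteristic exponent — in fact the one of \Cref{rem:charac-V}. One uses that $s^\epsilon(y\bsQ)\to y\sum_j Q_j$ pointwise with $|s^\epsilon(y\bsQ)|\leq y\sum_j|Q_j|=:yW$ (with $W$ as in \Cref{rem:useful-W}), subtracts a centering $\rmi z s^\epsilon(y\bsQ)\1{\{|s^\epsilon(y\bsQ)|\leq 1\}}$ inside the integral, and applies dominated convergence: the elementary bound $|\rme^{\rmi w}-1-\rmi w\1{\{|w|\leq 1\}}|\leq \min(w^2/2,2)$, together with the integrability of $y\mapsto \min((yW)^2,1)+\1{\{yW>1\}}$ against $\alpha y^{-\alpha-1}\rmd y$ and with $\EE[W^\alpha]<\infty$ — which is precisely \eqref{eq:SumOfTheQjs}, automatic for $\alpha\leq 1$ — provides an integrable majorant, while the left-over linear (drift) term converges because its divergent part cancels thanks to the identity $\theta\,\EE[\sum_j Q_j^{\langle\alpha\rangle}]=\EE[\Theta_0]$ for $\alpha\neq 1$, respectively $\theta\,\EE[\sum_j Q_j]=\EE[\Theta_0]$ for $\alpha=1$: this is exactly what aligns the Poisson compensator with the L\'evy--Khintchine centering $b_\epsilon$ chosen through $\mu$ in \eqref{eq:def-mu} (these are the identities of \cite{planinic:soulier:2017} recalled in \Cref{rem:conditions_in_terms_of_spectral_process}, consistent with the parameters computed in \cite[Theorem~3.2]{davis:hsing:1995}). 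In particular $V_\epsilon(1)$ converges in distribution to an $\alpha$-stable law. \textbf{(B)} For $0<\epsilon'\leq\epsilon$ the difference $V_{\epsilon'}-V_\epsilon$ is again a L\'evy process (it is $\sum_{T_i\leq t}(s^{\epsilon'}-s^\epsilon)(P_i\bsQ_i)$ minus a linear drift, with $(s^{\epsilon'}-s^\epsilon)(\tbx)=\sum_j x_j\1{\{\epsilon'<|x_j|\leq\epsilon\}}$), and $V_{\epsilon'}(1)-V_\epsilon(1)\Pto 0$ as $\epsilon\to 0$, uniformly in $\epsilon'\leq\epsilon$. This is proved exactly as in (A): the integrand is controlled by the $\epsilon'$-free majorant $\sum_j|yQ_j|\1{\{|yQ_j|\leq\epsilon\}}\leq yW$, which decreases to $0$ pointwise as $\epsilon\to 0$, and the same compensator identity removes the (divergent) drift.

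To pass from (B) to almost sure uniform convergence along a subsequence, I would invoke the standard fact that if L\'evy processes $L^{(n)}$ on $[0,1]$ satisfy $L^{(n)}(1)\Pto 0$, then their generating triplets converge to $(0,0,0)$, so $L^{(n)}\dto 0$ in $D([0,1])$ with the $J_1$ topology, and since the limit is the continuous zero function this forces $\sup_{0\leq t\leq 1}|L^{(n)}(t)|\Pto 0$. Applied to $V_{\epsilon'}-V_\epsilon$, claim (B) yields $\sup_{0\leq t\leq 1}|V_{\epsilon'}(t)-V_\epsilon(t)|\Pto 0$ as $\epsilon\to 0$, uniformly in $0<\epsilon'\leq\epsilon$; hence one may pick $\epsilon_k\downarrow 0$ with $\PP(\sup_{t\leq 1}|V_{\epsilon_{k+1}}(t)-V_{\epsilon_k}(t)|>2^{-k})\leq 2^{-k}$, and the Borel--Cantelli lemma makes $(V_{\epsilon_k})_k$ almost surely uniformly Cauchy on $[0,1]$. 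Its uniform limit $V$ is \cadlag, inherits stationary independent increments from the $V_{\epsilon_k}$, and satisfies $\log\EE[\rme^{\rmi z V(1)}]=\lim_k\psi_{\epsilon_k}(z)=\psi(z)$ by (A), so $V$ is the required $\alpha$-stable L\'evy process, and this is the desired convergence along the subsequence $\{\epsilon_k\}$.

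I expect the main obstacle to lie in the case $\alpha=1$: there $b_\epsilon$ diverges logarithmically as $\epsilon\to 0$ whenever $\mu$ is not symmetric, so the cancellation with the Poisson compensator in (A) and (B) has to be carried out while tracking the $\log(1/\epsilon)$ terms, and what survives after the cancellation is controlled only under the logarithmic moment condition \eqref{eq:SumOfTheQjs_alpha_equal_to_1} (equivalently \eqref{eq:LogSumOfTheThetajs}) — correspondingly the majorant in the dominated convergence steps must absorb a term of order $\sum_j|Q_j|\log(|Q_j|^{-1}\sum_i|Q_i|)$. It is also worth stressing why one cannot shortcut (B) by an $L^2$-Cauchy estimate: the second moment of $V_{\epsilon'}(1)-V_\epsilon(1)$ is governed by $\EE[\,W\sum_j|Q_j|^{\alpha-1}\,]$, which can be infinite even when \eqref{eq:SumOfTheQjs} holds (for instance for a linear process with slowly decaying but summable coefficients), so the argument has to go through convergence in probability together with the functional limit theorem for L\'evy processes rather than through an $L^2$ bound.
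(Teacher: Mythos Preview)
Your strategy is sound and genuinely different from the paper's. You work directly with the one-parameter family $(V_\epsilon)_{\epsilon>0}$, view each $V_\epsilon$ as a L\'evy process, establish a Cauchy-in-probability property for the sup norm via the general continuity theorem for L\'evy processes (convergence of generating triplets implies $J_1$ convergence to the zero process), and then extract an a.s.\ subsequence by Borel--Cantelli. The paper instead introduces a \emph{second} truncation level $\delta$ on the quantities $P_iW_i$ (with $W_i=\sum_j|Q_{i,j}|$) and studies the two-parameter family $V_{\epsilon,\delta}$; the key step is the variance estimate
\[
\var\bigl(V_{0,\delta}(1)-V_{0,\delta'}(1)\bigr)\;\leq\;\theta\,\EE\!\left[W^2\!\int_0^{\delta'/W}\!\alpha y^{1-\alpha}\,\rmd y\right]=\frac{\alpha\theta}{2-\alpha}\,(\delta')^{2-\alpha}\,\EE[W^\alpha]\,,
\]
after which Doob's maximal inequality gives a.s.\ uniform Cauchyness in $\delta$, and a separate estimate handles the passage $\epsilon\to0$ for fixed $\delta$. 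So your closing remark deserves a correction: an $L^2$ argument \emph{does} work---just not on the $\epsilon$-truncation directly. The paper's insight is that truncating on $yW\leq\delta$ (rather than on the individual coordinates $y|Q_j|\leq\epsilon$) collapses the second moment to a multiple of $\EE[W^\alpha]$, sidestepping the potentially infinite $\EE[W\sum_j|Q_j|^{\alpha-1}]$ you correctly flagged. What your route buys is avoiding the auxiliary truncation and the martingale inequality altogether, at the price of invoking the heavier convergence theory for L\'evy triplets and carrying out the drift cancellation by hand.

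One gap in your sketch of (B): the proposed majorant $\sum_j y|Q_j|\1{\{y|Q_j|\leq\epsilon\}}\leq yW$ is not integrable against $\alpha y^{-\alpha-1}\,\rmd y$ for $\alpha\leq 1$, so dominated convergence does not apply as stated. You must first split on $\{yW\leq1\}$ versus $\{yW>1\}$, using $|e^{\rmi w}-1-\rmi w|\leq w^2/2$ on the former (majorant $\frac{\alpha}{2-\alpha}W^\alpha$) and $\leq 2|w|$ on the latter; it is precisely on $\{yW>1\}$, for $\alpha=1$, that assumption \eqref{eq:SumOfTheQjs_alpha_equal_to_1} enters, via the bound $\sum_j|Q_j|\1{\{|Q_j|<\epsilon W\}}\log(\epsilon W/|Q_j|)\leq\sum_j|Q_j|\log(W/|Q_j|)$ valid for $\epsilon\leq1$. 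With that fix your argument goes through.
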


\begin{proof}
  Recall that
  \begin{align*}
    V_\epsilon(t) & = \sum_{T_i\leq t} s^\epsilon(P_i \bsQ_i) - t \int_{\{x\; : \;\epsilon < |x| \leq 1\}} x \mu(dx) \; 
  \end{align*}
  where 
\begin{equation*}
  \mu(dx)=p\alpha x^{-\alpha-1} \1{(0,\infty)}(x) \rmd x + (1-p)\alpha (-x)^{-\alpha-1} \1{(-\infty,0)}(x) \rmd x
\end{equation*}
for $p=\PP(\Theta_0=1)$. We first show that the centering term can be
  expressed as an expectation of a functional of the limiting point process $\PPC$. More precisely,
  we show that for all $\epsilon>0$
  \begin{equation}
    \label{eq:equality_of_centerings}
    \int_{\{x\; : \;\epsilon < |x| \leq 1\}} x \mu(dx) =\theta \int_0^\infty \EE\left[y\sum_{j\in\ZZ} Q_j \1{\{\epsilon < y |Q_j| \leq 1\}} \right] 
    \alpha y^{-\alpha -1} \rmd y \; .
  \end{equation}
First, as shown in \cite[Theorem 3.2, Equation (3.13)]{davis:hsing:1995} it holds that
  \begin{align}\label{eq:DH95identity}
  \theta \EE\left[\sum_{j\in \ZZ} Q_j |Q_j|^{\alpha -1}\right]=2p-1
  \end{align}
  so by Fubini's theorem, if $\alpha>1$
  \begin{align*}
    \theta \int_0^\infty \EE\left[y\sum_{j\in\ZZ} Q_j \1{\{\epsilon < y |Q_j| \leq 1\}} \right] \alpha y^{-\alpha -1} \rmd y 
    & = \alpha \theta \EE\left[\sum_{j\in\ZZ} Q_j \int_{\epsilon |Q_j|^{-1}}^{|Q_j|^{-1}}  y^{-\alpha} \rmd y  \right] \\
    & = \frac{\alpha}{\alpha - 1} (\epsilon^{-\alpha +1 } - 1) \theta \EE\left[\sum_{j\in \ZZ} Q_j |Q_j|^{\alpha -1}\right]\\
    & = \frac{\alpha}{\alpha - 1} (\epsilon^{-\alpha +1 } - 1) (2p-1)  \;,
  \end{align*}
  and if $\alpha=1$ the same term equals $ \log(\epsilon^{-1}) (2p-1)$. Note that the use of
  Fubini's theorem is justified since the same calculation as above shows that the above integral
  converges absolutely since $\EE[\sum_{j\in \ZZ} |Q_j|^{\alpha}] < \infty$. The equality in
  \eqref{eq:equality_of_centerings} now follows by the definition of the measure $\mu$. Hence, for
  all $t\in [0,1]$
  \begin{align*}
    V_\epsilon(t)   = \sum_{T_i\leq t} s^\epsilon(P_i \bsQ_i) 
    - t \theta \int_0^\infty  \EE\left[y\sum_{j\in\ZZ} Q_j \1{\{\epsilon < y |Q_j| \leq 1\}} \right] \alpha y^{-\alpha -1} \rmd y \; .  
  \end{align*}  
  Recall from \Cref{rem:useful-W} that we can define $W=\sum_{j\in\ZZ} |Q_j|$,
  $W_i=\sum_{j\in \ZZ}|Q_{i,j}|$ so that $\{W_i,i\geq 1\}$ is a sequence of \iid\ random variables
  with the same distribution as $W$ and $\EE[W_i^\alpha] <\infty$ and that
  $\sum_{i=1}^\infty\delta_{P_i W_i}$ is a Poisson point process on $(0,\infty]$ with intensity
  measure $\theta \EE [W_1^\alpha] \alpha y^{-\alpha -1}dy.$ In particular, for every $\delta>0$
  there are almost surely at most finitely many points $P_i W_i$ such that $P_i W_i > \delta$. For
  $\delta,\epsilon>0$, define
  \begin{align*}
    m_{\epsilon,\delta} & = \theta \int_0^\infty \EE\left[y\sum_{j\in\ZZ} Q_j \1{\{\epsilon <
                          y|Q_j|\leq 1,\;\delta < yW\}}\right]\alpha y^{-\alpha -1}dy \; .
  \end{align*}
  Note that $\lim_{\epsilon\to0} m_{\epsilon,\delta}=m_{0,\delta}$ for all $\delta>0$ by the
  dominated convergence theorem.  Indeed, if $\alpha=1$ we have that
  \begin{multline*}
    \theta \int_0^\infty \EE\left[y\sum_{j\in\ZZ} |Q_j| \1{\{y|Q_j|\leq 1,\;\delta < yW\}} \right]
    \alpha y^{-\alpha -1} \rmd y \leq \theta \EE \left[ \sum_{j\in\ZZ} |Q_j|
      \int_{\frac{\delta\wedge1}{W}}^{\frac{1}{|Q_j|}} y^{-1} \rmd y \right]    \\
    = \theta \EE\left[ \sum_{j\in\ZZ} |Q_j| \log(|Q_j|^{-1}) + W \log W + \log((\delta\wedge1)^{-1})  W \right] \; ,
  \end{multline*}
  which is finite by assumption \eqref{eq:SumOfTheQjs_alpha_equal_to_1}, and if $\alpha>1$ similar
  calculation using the assumption $\EE[W^\alpha] <\infty$ justifies the use of the dominated
  convergence theorem.  

  Since for every $\delta>0$ there a.s. exists at most finitely many points $P_i W_i$ such that
  $P_i W_i >\delta$, for every $\epsilon \geq 0$ we can define the process $V_{\epsilon,\delta}$ in
  $D[0,1]$ by
  \[
  V_{\epsilon,\delta}(t)=\sum_{T_i\leq t} s^\epsilon(P_i \bsQ_i)\1{\{\delta < P_iW_i\}} -
  tm_{\epsilon,\delta}=\sum_{T_i\leq t}\sum_{j\in\ZZ}P_i Q_{i,j} \1{\{\epsilon<P_i |Q_{i,j}|,\;
    \delta < P_iW_i\}} - t m_{\epsilon,\delta} \; .
  \]
  Furthermore, for every fixed $\delta>0$, as $\epsilon \to 0$, $V_{\epsilon,\delta}$ converges
  uniformly almost surely to $V_{0,\delta}$.

  Next, we prove that for any positive sequence $\{\delta_k\}$ with $\delta_k\searrow 0$ as
  $k\to \infty$, $V_{0,\delta_k}$ converges uniformly almost surely to a process $V$ in $D([0,1])$.
  Note first that by \cite[Theorem 3.1]{davis:hsing:1995} the finite dimensional distributions of $V_{0,\delta}$ converge to those of an
  $\alpha$-stable L\'evy process.

  Since $\sum_{i\geq1}\delta_{T_i,P_i,\bsQ_i}$ is a Poisson point process on
  $[0,1]\times(0,\infty]\times\lo$, the process $V_{0,\delta}$ has independent increments with
  respect to $\delta$, that is for every $\delta<\delta'$, $V_{0,\delta}-V_{0,\delta'}$ is
  independent of $V_{0,\delta'}$.  Moreover, since $V_{0,\delta}-V_{0,\delta'}$ is a Poisson
  integral, we have that
  \begin{align*}
    \var(V_{0,\delta}(1)-V_{0,\delta'}(1) )
    & = \theta \int_0^\infty y^2 \EE \left[ \left( \sum_{j\in\ZZ} Q_j \right)^2\1{\{\delta  < y W \leq \delta'\}}  \right] \alpha y^{-\alpha-1} \rmd y  \\
    & \leq \theta \EE \left[ W^2 \int_0^{\delta'/W} \alpha y^{-\alpha+1} \rmd y \right] = \frac{\theta\alpha
      (\delta')^{2-\alpha}}{(2-\alpha)} \EE[W^\alpha] \; .
  \end{align*}
  Therefore, $\lim_{\delta' \to 0} \var(V_{0,\delta}(1)-V_{0,\delta'}(1) )=0$ and now arguing
  exactly as in the proof of \cite[Proposition~5.7, Property~2]{resnick:2007} shows that for any
  positive sequence $\{\delta_k\}$ with $\delta_k\searrow 0$, $\{V_{0,\delta_k}\}$ is almost surely
  a Cauchy sequence in $D([0,1])$ with respect to the supremum metric $\|\cdot\|_\infty$. Since the
  space $D([0,1])$ is complete under this metric, we obtain the existence of the process
  $V=\{V(t), t\in [0,1]\}$ with paths in $D([0,1])$ almost surely and such that
  $\lim_{k\to\infty}\|V_{0,\delta_k}-V\|_\infty =0$ almost surely.

  There only remains to prove that for all $u>0$,
  \begin{align}
    \label{eq:slutsky}
    \lim_{\delta\to0} \limsup_{\epsilon\to0} \PP( \|V_\epsilon-V_{\epsilon,\delta}\|_\infty > u) = 0 \; .
  \end{align}
  Indeed, this would imply that $\|V_\epsilon-V\|_\infty\to 0$ in probability and hence that, along
  some subsequence, $V_\epsilon$ converges to $V$ uniformly almost surely.  Since for
  $\delta\leq 1$, $yW=\sum_{j\in\ZZ}y|Q_j|\leq \delta$ implies that $y|Q_j|\leq \delta \leq 1$ for
  all $j\in\ZZ$, we have that
  \begin{multline*}
    V_\epsilon(t)-V_{\epsilon,\delta}(t)
    = \sum_{T_i\leq t}\sum_{j\in\ZZ}P_i Q_{i,j} \1{\{\epsilon < P_i|Q_{i,j}|,\; P_iW_i \leq \delta \}} \\
    - t \theta \int_0^\infty \EE \left[ y\sum_{j\in\ZZ} Q_j \1{\{\epsilon < y|Q_j|,\; y W \leq
        \delta\}} \right] \alpha y^{-\alpha -1} \rmd y \; .
  \end{multline*}
  The process $V_\epsilon-V_{\epsilon,\delta}$ is a c\`adl\`ag martingale, thus applying
  Doob-Meyer's inequality yields
  \begin{align*}
    \PP\left( \|V_\epsilon-V_{\epsilon,\delta}\|_\infty > u \right) 
    & \leq u^{-2} \var(V_\epsilon(1)-V_{\epsilon,\delta}(1) ) \\
    & = u^{-2} \theta\int_0^\infty y^2 \EE \left[ \left( \sum_{j\in\ZZ} Q_j \1{\{\epsilon < y|Q_j|,\; y W \leq \delta\}} \right)^2 \right] 
      \alpha y^{-\alpha-1} \rmd y  \\
    & \leq u^{-2} \theta\EE \left[ W^2 \int_0^{\delta/W} \alpha y^{-\alpha+1} \rmd y \right] = \frac{\theta\alpha
      \delta^{2-\alpha}}{u^2(2-\alpha)} \EE[W^\alpha] \; 
  \end{align*}
  and hence $\eqref{eq:slutsky}$ holds.
\end{proof}

\subsection{The parameters of the $1$-stable random variable $V(1)$}

\begin{lemma}\label{lem:alpha_stable_parameters} 
In the case $\alpha=1$, the characteristic function of $V(1)$ where $V$ is the $1$-stable L\'evy process from \Cref{thm:PartialSumConvinE} is given by
  \begin{align}
    \label{eq:charac-V}
    \log \EE[\rme^{\rmi z V(1)}] = 
      \rmi \location z -
      \frac\pi2 \sigma|z|\{1 - \rmi \frac2\pi\skewness \mathrm{sgn}(z) \log(|z|)\}    
\end{align}
with 
\begin{align*}
  \sigma & = \theta\EE\left[\left|\sum_{j\in \ZZ}Q_j\right|\right] \; , \ \
  \skewness  = \frac{\EE[(\sum_{j\in \ZZ}Q_j)_+]-\EE[(\sum_{j\in \ZZ}Q_j)_-]}{\EE[|\sum_{j\in \ZZ}Q_j|]}
\end{align*}
and 
\[
\location = \theta\left(c_0 \EE\left[\sum_{j\in\ZZ} Q_j\right] -  \EE\left[\sum_{j\in\ZZ} Q_j
  \log\left(\left| \sum_{j\in\ZZ} Q_j \right|\right)\right]-\EE\left[\sum_{j\in\ZZ} Q_j
  \log\left(\left|Q_{j} \right|^{-1}\right)\right]\right) \; ,
\]
with $c_0 = \int_{0}^{\infty}(\sin y - y\1{(0,1]}(y))y^{-2} \rmd y$.
\end{lemma}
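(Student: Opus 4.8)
The plan is to use the representation of $V(1)$ as the weak limit of the centred Poisson integrals $V_\epsilon(1)$ and to evaluate the resulting limiting integral explicitly. Recall from \Cref{lem:newlem} (and its proof) that, as $\epsilon\to0$, $V_\epsilon(1)=\sum_i s^\epsilon(P_i\bsQ_i)-\int_{\epsilon<|x|\le1}x\,\mu(\rmd x)$ converges to $V(1)$ in probability, where $\sum_i\delta_{(T_i,P_i,\bsQ_i)}$ is a Poisson point process on $[0,1]\times(0,\infty]\times\lo$ with intensity equal to the product of Lebesgue measure, $\rmd(-\theta y^{-1})$ and $\PP_{\bsQ}$ (here $\alpha=1$). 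First I would apply Campbell's theorem for Poisson integrals together with the identity \eqref{eq:equality_of_centerings} (which rewrites the centring constant as a functional of the limiting point process) to obtain, for every $z\in\RR$,
\[
\log\EE[\rme^{\rmi z V_\epsilon(1)}] = \theta\int_0^\infty \EE\Big[\rme^{\rmi z s^\epsilon(y\bsQ)}-1-\rmi z y\sum_{j\in\ZZ}Q_j\1{\epsilon<y|Q_j|\le1}\Big]\,\frac{\rmd y}{y^2}\,;
\]
the right-hand side is an absolutely convergent integral because, since $\|\bsQ\|_\infty=1$, the integrand vanishes for $y\le\epsilon$ and is bounded by $2$ otherwise, and $\log\EE[\rme^{\rmi z V(1)}]$ is well defined since $V(1)$ is stable.

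Next I would let $\epsilon\to0$ under the integral sign. Using $\sum_{j\in\ZZ}|Q_j|=W<\infty$ a.s.\ (which follows from $\theta\sum_j\EE[|Q_j|]\le1$), the integrand converges pointwise in $y$ to $\EE[\rme^{\rmi z y s(\bsQ)}-1-\rmi z y\sum_{j\in\ZZ}Q_j\1{y|Q_j|\le1}]$ with $s(\bsQ)=\sum_{j\in\ZZ}Q_j$. For dominated convergence I would split the $y$-integral at $1$: on $(0,1]$ the coordinatewise centring equals $y\,s(\bsQ)$ (again because $\|\bsQ\|_\infty=1$), and the elementary bound $|\rme^{\rmi\lambda}-1-\rmi\lambda|\le \tfrac12\lambda^2\wedge 2|\lambda|$ yields a dominating function whose expectation is controlled by $\EE[W\log^+W]$; on $(1,\infty)$ the integrand times $y^{-2}$ is bounded by $2y^{-2}+|z|y^{-1}\sum_{j\in\ZZ}|Q_j|\1{y|Q_j|\le1}$, which integrates to $2+|z|\sum_{j\in\ZZ}|Q_j|\log|Q_j|^{-1}$. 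Both dominating bounds are integrable precisely because of assumption \eqref{eq:SumOfTheQjs_alpha_equal_to_1}, which is exactly where this hypothesis enters; passing to the limit gives
\[
\log\EE[\rme^{\rmi z V(1)}] = \theta\int_0^\infty \EE\Big[\rme^{\rmi z y s(\bsQ)}-1-\rmi z y\sum_{j\in\ZZ}Q_j\1{y|Q_j|\le1}\Big]\,\frac{\rmd y}{y^2}\,.
\]

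Finally I would evaluate this integral by conditioning on $\bsQ=\bx$ and writing $S:=s(\bx)$. The coordinatewise centring $\sum_j x_j\1{y|x_j|\le1}$ agrees with the scale centring $S\,\1{(0,1]}(y)$ for $y\le1$, so the difference between the integral above and the one with scale centring equals $\rmi z\sum_j x_j\int_1^\infty\1{y|x_j|\le1}\,y^{-1}\rmd y=\rmi z\sum_j x_j\log|x_j|^{-1}$, the interchange of sum and integral being legitimate by \eqref{eq:SumOfTheQjs_alpha_equal_to_1}. For the remaining integral I would invoke the classical one-dimensional $1$-stable identity
\[
\int_0^\infty\big(\rme^{\rmi\lambda y}-1-\rmi\lambda y\1{(0,1]}(y)\big)\,\frac{\rmd y}{y^2} = -\frac{\pi}{2}|\lambda|+\rmi\lambda\big(c_0-\log|\lambda|\big)\,,\qquad\lambda\in\RR\,,
\]
(obtained by the substitution $u=|\lambda|y$, splitting off the indicator, and using $\int_0^\infty u^{-2}(\cos u-1)\,\rmd u=-\pi/2$ together with the definition of $c_0$) with $\lambda=zS$, and then take $\theta\EE[\cdot]$. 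Collecting the terms according to their dependence on $|z|$, on $z\log|z|$ and on $z$, and checking finiteness of the relevant expectations ($\EE|S|\le\EE W$; $\EE|S\log|S||\le\EE[W\log^+W]+\rme^{-1}$; $\EE|\sum_j Q_j\log|Q_j|^{-1}|\le\EE[\sum_j|Q_j|\log|Q_j|^{-1}]$), one identifies, by comparison with \eqref{eq:charac-V}, the scale $\sigma$, the skewness $\skewness$ (invoking \eqref{eq:DH95identity} where convenient) and the location $\location$. The main obstacle is the centring bookkeeping special to $\alpha=1$: whereas for $\alpha\neq1$ switching between the coordinatewise, scale and jump-size centrings only changes a bounded drift, for $\alpha=1$ every such switch contributes a genuine $\log$-term to $\location$, and one must verify carefully that \eqref{eq:SumOfTheQjs_alpha_equal_to_1} makes all of these terms, as well as the limiting integral, absolutely convergent.
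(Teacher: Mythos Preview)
Your proposal is correct and follows essentially the same route as the paper: represent the characteristic function as a Poisson exponent, pass to the limit by dominated convergence, switch to the standard centring $\1{(0,1]}(y)$, and evaluate via the classical $1$-stable integral identity. The only noteworthy difference is that the paper works with the cluster-truncated approximants $V_{0,\delta}$ and the centring $\1{yW\le 1}$ (already set up in \Cref{lem:newlem}), which makes the dominated-convergence bound simpler (just $(2+|z|^2)\EE[W]$), whereas your coordinatewise route via $V_\epsilon$ and $\1{y|Q_j|\le 1}$ forces the slightly more delicate $\EE[W\log^+ W]$ and $\EE[\sum_j|Q_j|\log|Q_j|^{-1}]$ bounds---both of which are indeed implied by \eqref{eq:SumOfTheQjs_alpha_equal_to_1}, so the argument goes through (your throwaway ``bounded by $2$'' for the $\epsilon$-integral is not quite right because of the centring term, but the integral is still absolutely convergent for the reasons you give later).
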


\begin{proof}
As shown in the proof of \Cref{lem:newlem}, $V(1)$ is the distributional limit
  of the sequence of random variables $\{V_{0,\delta_k}(1), k\in\NN\}$ for any positive sequence
  $\{\delta_k\}$ such that $\delta_k\searrow 0$, where for $\delta>0$
\begin{align*}
V_{0,\delta}(1)&=\sum_{i\in \NN}\sum_{j\in\ZZ}P_i Q_{i,j} \1{\{\delta < P_iW_i\}} - 
  \theta \int_0^\infty \EE\left[y\sum_{j\in\ZZ} Q_j \1{\{y|Q_j|\leq 1,\;\delta < yW\}}\right]\alpha y^{-\alpha -1}dy 
\end{align*}
where $\sum_{i\in \NN}\delta_{T_i,P_i,\{Q_{i,j}\}_{j\in\ZZ}}$ is a Poisson point process on
$[0,1]\times(0,\infty]\times\lo$ with intensity measure $Leb\times d(-\theta y^{-\alpha})\times \PP_{\bsQ} $ and $W=\sum_{j\in\ZZ} |Q_j|$,
$W_i=\sum_{j\in \ZZ}|Q_{i,j}|$ . Hence for all $z\in\RR$
\[
\log\EE\left[e^{\rmi zV(1)} \right]=\lim_{k\to \infty}\log\EE\left[e^{\rmi zV_{0,\delta_k}(1)} \right] \; .
\]
 Since $yW\leq 1$ implies that $y|Q_j|\leq 1$ for all $j\in\ZZ$, for all $\delta<1$ we have that
\begin{multline*}
  V_{0,\delta}=\left(\sum_{i,j} P_i Q_{i,j}\ind{ \delta < P_i W_i}-\theta \int_0^\infty
    \EE\left[y\sum_{j\in\ZZ} Q_j \1{\{\delta < yW \leq 1\}}\right] y^{-2}dy\right)  \\
  - \theta \int_0^\infty \EE\left[y\sum_{j\in\ZZ} Q_j \ind{ y|Q_j|\leq 1,\;1< yW}\right] y^{-2}dy \;
  .
\end{multline*}
By Fubini's theorem, the last term above is equal to 
\begin{align*}
  \theta \EE\left[ \sum_{j\in\ZZ} Q_j \log(W) \right] + \theta \EE\left[ \sum_{j\in\ZZ} Q_j \log(|Q_j|^{-1}) \right] \;  ,
\end{align*}
(with the usual convention $0\log 0=0$).  Therefore, for all $z\in\RR$ and $\delta<1$
\begin{multline}
  \label{eq:characteristic}
  \log\EE\left[e^{\rmi zV_{0,\delta}(1)} \right]=\theta\int_0^\infty
  \EE\left[ \left\{e^{\rmi zyS}-1-\rmi zyS\ind{yW \leq 1}\right\}\ind{\delta < yW} \right] y^{-2}dy \\
  - \rmi z\theta \EE\left[ \sum_{j\in\ZZ}Q_j \log(|Q_j|^{-1} W) \right] \; .
\end{multline}
where $S=\sum_{j\in\ZZ} Q_j$. Since for all $\delta<1$, using the fact that
$| e^{\rmi z}-1-\rmi z|\leq |z|^2 / 2 \leq |z|^2 $ for all $z\in\RR$ (see for example \cite[Lemma
8.6]{sato:1999}) and $\EE [W]<\infty$,
\begin{multline*}
  \EE\left[\int_0^\infty \left| e^{\rmi zyS}-1-\rmi zyS\ind{yW \leq 1}\right|\ind{\delta < yW}
    y^{-2}dy\right] \\ \leq \EE\left[\int_{1/W}^\infty \left| e^{\rmi zyS}-1\right| y^{-2}dy\right]
  +\EE\left[\int_{\delta / W}^{1/W}  \left| e^{\rmi zyS}-1-\rmi zyS\right|  y^{-2}dy\right] \\
  \leq 2\EE[ W] + |z|^2(1-\delta) \EE [W]\leq (2+|z|^2)\EE [W]<\infty \; ,
\end{multline*}
by the dominated convergence theorem, as $\delta\to 0$  the first term on the right side of  \eqref{eq:characteristic} tends to
\begin{multline*}
  \theta\int_0^\infty \EE\left[ \left\{e^{\rmi zyS}-1-\rmi zyS\ind{yW \leq 1}\right\} \right] y^{-2}dy \\
  =\theta\EE\left[\int_0^\infty \left\{e^{-\rmi zyS}-1-\rmi zyS\1{(0,1]}(y)\right\} y^{-2}dy\right]
  + \rmi z\theta \EE\left[S\int_{1/W}^1 y^{-1}dy \right]\; .
\end{multline*}
Altogether, using the integral from \cite[Page 85]{sato:1999} we get that
\begin{multline*}
  \lim_{\delta\to 0}\log\EE\left[e^{\rmi zV_{0,\delta}(1)}
  \right]=-\theta\frac{\pi}{2}|z|\EE[|S|]-i\theta z\log|z|\EE[S]
  -\rmi z\theta\EE[S\log|S|]+ \rmi c_0\theta z\EE[S]\\
  +\rmi z\theta\EE[S\log W]-\rmi z\theta\EE[S\log W]-\theta \EE[\sum_{j\in\ZZ}Q_j \log(|Q_j|^{-1}) ]  \; ,
\end{multline*}
where 
\[
c_0 = \int_{0}^{\infty}\frac{\sin y - y\1{(0,1]}(y) }{y^2} dy \; .
\]
Setting $\sigma=\theta\EE[|S|]$, $\skewness=\frac{\EE[S]}{\EE[|S|]}$ and 
\[
\location=\theta\left( c_0\EE[S] -\EE[S\log|S|] -\EE\left[\sum_{j\in\ZZ}Q_j \log(|Q_j|^{-1}) \right] \right) \; ,
\]
since the term $\rmi z\theta\EE[S\log W]$ cancels out, we obtain that
\begin{align*}
  \log\EE\left[e^{\rmi zV(1)} \right]
  & = \lim_{\delta\to 0}\log\EE\left[e^{\rmi zV_{0,\delta}(1)} \right] \\
  & = - \frac{\pi}{2}\sigma |z|\left(1+\rmi \frac{2}{\pi} \skewness \text{sgn}(z) \log |z|   \right) + \rmi\location z \; .
\end{align*}

\end{proof}

\section*{Acknowledgements}  
Parts of this paper were written when Bojan Basrak visited the Laboratoire MODAL'X at Universit\'e
Paris Nanterre. Bojan Basrak takes pleasures in thanking MODAL'X and for excellent hospitality
and financial support, as well as Johan Segers for useful discussions over the years.  The work of
Bojan Basrak and Hrvoje Planini\'c has been supported in part by Croatian Science Foundation under
the project 3526.  The work of Philippe Soulier was partially supported by LABEX MME-DII.

\bibliographystyle{apalike}

\end{document}